\newtheorem{theorem}{Theorem}[section]
\newtheorem{corollary}{Corollary}[section]
\newtheorem{lemma}{Lemma}[section]
\newtheorem{proposition}{Proposition}[section]
\newtheorem{assumption}{Assumption}[section]
\theoremstyle{definition}
\newtheorem{definition}{Definition}[section]
\newtheorem{remark}{Remark}[section]
\renewcommand{\tilde}{\widetilde}
\renewcommand{\hat}{\widehat}
\DeclareMathOperator*{\argmin}{arg\,min}
\begin{document}

\title[]{On the estimation of locally stationary functional time series}
\thanks{D. Kurisu is partially supported by JSPS KAKENHI Grant Number 20K13468. The author would like to thank Alessia Caponera, Konstantinos Fokianos, Kengo Kato, Muneya Matsui, and Taisuke Otsu for their helpful comments and suggestions.
} 

\author[D. Kurisu]{Daisuke Kurisu}

\date{First version: April 28, 2021. This version: \today}

\address[D. Kurisu]{Center for Spatial Information Science, The University of Tokyo\
5-1-5, Kashiwanoha, Kashiwa-shi, Chiba 277-8568, Japan
}
\email{daisukekurisu@csis.u-tokyo.ac.jp}
\subjclass[2010]{60F05, 62G05, 62G20}

\begin{abstract}
This study develops an asymptotic theory for estimating the time-varying characteristics of locally stationary functional time series (LSFTS). We investigate a kernel-based method to estimate the time-varying covariance operator and the time-varying mean function of an LSFTS. In particular, we derive the convergence rate of the kernel estimator of the covariance operator and associated eigenvalue and eigenfunctions and establish a central limit theorem for the kernel-based locally weighted sample mean. As applications of our results, we discuss methods for testing the equality of time-varying mean functions in two functional samples. 

\medskip

\noindent
\textit{Keywords}: functional time series, locally stationary process, principal component analysis, nonparametric estimation
\end{abstract}


\maketitle

\section{Introduction}

Functional time series analysis has been a growing interest in many scientific fields, such as environmetrics (\cite{AuDuNo15}), biometrics (\cite{ChMu09}), demographics (\cite{LiRoSh20}), and finance (\cite{KoZh12}, \cite{ChLeTu16}). We refer to \cite{RaSi05}, \cite{FeVi06}, and \cite{HoKo12} as the standard references on functional data and functional time series analysis. 

In the literature of functional time series analysis, most of the studies are based on stationary models (e.g., \cite{Bo00, Bo02}, \cite{DeSh05}, \cite{AnPaSa06}, \cite{HoKoRe13}, \cite{AuDuNo15}, and \cite{DeKoVo20}). However, many functional time series exhibit nonstationary behavior. Typical examples include the daily patterns of temperature observed in a region and daily curves of implied volatility of an option as a function of moneyness. We can also find other examples of nonstationary functional time series in \cite{vaEi18}. One way to model nonstationary behavior is provided by the theory of locally stationary processes.

A locally stationary process, as proposed by \cite{Da97}, is a nonstationary time series 
that allows its parameters to be time-varying and can be approximated by a stationary time series 
at each rescaled time point. 
This property enables us to develop asymptotic theories for the estimation of time-varying characteristics. The rigorous definition of local stationarity for functional data is given in Section \ref{Sec: setting}. There is a vast literature on locally stationary (multivariate) time series. We mention \cite{DaSu06}, \cite{FrSaSu08}, \cite{Kr09}, \cite{KoLi12}, \cite{Vo12} \cite{Zh14}, \cite{ZhWu15}, \cite{Tr17, Tr19}, and \cite{KuFuKo22}, to name a few. Since the introduction of the notion of locally stationary processes, it has been extended in several directions, such as spatial data (\cite{Pe18}, \cite{Ku21}) and spatio-temporal data (\cite{MaYa18}). As recent important contributions in the literature of functional time series analysis, we refer to \cite{vaEi18}, \cite{Auva20}, \cite{BuDeHe20}, and \cite{vaDe21}. \cite{vaEi18} extend the notion of locally stationary processes to functional time series that take values in a Hilbert space. The authors investigate frequency domain methods for locally stationary functional data. \cite{Auva20} develop a method for testing the null hypothesis that the observed functional time series is stationary against the hypothesis that it is locally stationary. \cite{BuDeHe20} investigate testing procedure for second-order stationarity of locally stationary functional time series. \cite{vaDe21} develop a similarity measure based on time-varying spectral density operators for cluster analysis of nonstationary functional time series. We also mention \cite{Ku21b}, who investigates nonparametric regression for locally stationary functional time series and \cite{DeWu21} for the construction of confidence surfaces of the mean function of locally stationary functional time series.

The objective of this paper is to develop an asymptotic theory for a kernel-based method to estimate the time-varying covariance operator and the time-varying mean function of the locally stationary functional time series at each rescaled time point. 

The first objective is related to the problem of estimating time-varying eigenvalues and eigenfunctions of the covariance operator, which is important to develop methods for functional principal component analysis (FPCA) that are based on those time-varying (or local) characteristics. It has been recognized that FPCA is a dimension reduction technique in many scientific research fields, such as chemical engineering, functional magnetic resonance imaging, and environmental science. We refer to \cite{RaSi05} for a detailed comparison between PCA in multivariate setting and FPCA. Specifically, we consider a kernel-based estimator of the time-varying covariance operator and derive its convergence rate (i.e. Theorem \ref{Thm1}). The result also enables us to derive convergence rates of estimators of associated eigenvalues and eigenfunctions. 

The second objective is to extend the statistical methods based on the sample mean of functional time series (e.g., testing the equality of mean functions in two functional samples (cf. \cite{HoKoRe13} and \cite{DeKoVo20})) developed under the assumption of stationarity to nonastationary cases. We consider a kernel-based locally weighted sample mean and show that it converges to a Gaussian process (i.e. Theorem \ref{Thm2}). We also propose a kernel-based estimator of the long-run covariance kernel function of the Gaussian process and establish its consistency. 

As possible applications of our results, we discuss problems on 
estimating local mean functions and testing the equality of local mean functions of locally stationary functional time series. For the implementation of the two sample test, we use FPCA-based projection methods to retain most of the information carried by the original process and provide a method for selecting the number of principal components. Consequently, we extend well-known results developed for stationary functional time series to our framework. 

The organization of this paper is as follows. In Section \ref{Sec: setting}, we introduce the notion of local stationarity for functional time series that take values in a Hilbert space and dependence structure of the functional time series. In Section \ref{Sec: Main}, we present the main results. In Section \ref{Sec: Applications}, we discuss some applications of our results. All proofs are included in the Appendix. 

\subsection{Notations}
For any positive sequences $a_{n}$ and $b_{n}$, we write $a_{n} \lesssim b_{n}$ if a constant $C >0$ independent of $n$ exists such that $a_{n} \leq Cb_{n}$ for all $n$ and  $a_{n} \sim b_{n}$ if $a_{n} \lesssim b_{n}$ and $b_{n} \lesssim a_{n}$. 
For any $a,b \in \mathbb{R}$, we write $a \vee b=\max\{a,b\}$ and $a \wedge b=\min\{a,b\}$. We use the notations $\stackrel{d}{\to}$ and $\stackrel{p}{\to}$ to denote convergence in distribution and convergence in probability, respectively. For a Hilbert space with inner product $\langle \cdot, \cdot \rangle$, let $\|x\| = \sqrt{\langle x, x \rangle}$, $x \in H$ and let $\mathcal{L}$ denote the space of bounded (continuous) linear operators on $H$ with the norm 
\[
\|\Psi\|_{\mathcal{L}} = \sup\{\|\Psi(x)\|: \|x\|\leq 1, x \in H\}. 
\]
An operator $\Psi \in \mathcal{L}$ is said to be compact if there exist two orthonormal basis $\{v_{j}\}$ and $\{f_{j}\}$, and a real sequence $\{\lambda_{j}\}$ converging to zero such that
\begin{align}\label{def: compact-op-mainbody}
\Psi(x) = \sum_{j=1}^{\infty}\lambda_{j}\langle x, v_{j} \rangle f_{j},\ x \in H. 
\end{align}
Further, a compact operator with the singular value decomposition (\ref{def: compact-op-mainbody}) is said to be a Hilbert-Schmidt operator if $\sum_{j=1}^{\infty}\lambda_{j}^{2}<\infty$. Let $\mathcal{S}$ be the space of Hilbert-Schmidt operators on $H$ with inner product
\[
\langle \Psi_{1}, \Psi_{2}\rangle_{\mathcal{S}} = \sum_{j=1}^{\infty}\langle \Psi_{1}(e_{j}), \Psi_{2}(e_{j})\rangle,
\]
where $\{e_{j}\}$ is an arbitrary orthonormal basis of $H$. The Hilbert-Schmidt norm on $\mathcal{S}$ is defined as $\|\Psi\|_{\mathcal{S}}=\sqrt{\sum_{j=1}^{\infty}\lambda_{j}^{2}}$, which does not depend on the choice of the orthonormal basis. Note that $(\mathcal{L}, \|\cdot \|_{\mathcal{L}})$ is a Banach space and $(\mathcal{S}, \|\cdot\|_{\mathcal{S}})$ is a separable Hilbert space. We refer to \cite{Bo00} for details on the properties of operators on a Hilbert space. For $x \in H$, define the tensor product $f \otimes g: H \otimes H \to H$ as the bounded linear operator $(f \otimes g)(x)=\langle g, x \rangle f$.

\section{Settings}\label{Sec: setting}

In this section, we introduce the notion of a locally stationary functional time series that extends the notion of a locally stationary process introduced by \cite{Da97}. Furthermore, we discuss dependence structures of the functional time series. For some probability space $(\Omega, \mathcal{A},P)$ and for $p \geq 1$, let $L^{p}(\Omega, \mathcal{A},P)$ denote the space of real-valued random variables such that $\|X\|_{p}=(E[|X|^{p}])^{1/p}<\infty$. Let $L^{2} = L^{2}([0,1]^d)$ denote a Hilbert space with  inner product $\langle \cdot, \cdot \rangle$ defined by
\[
\langle x, y \rangle = \int_{[0,1]^d} x(t)y(t)dt,\ x, y \in H=L^{2}.
\]
Further, let $L^{p}_{H}=L^{p}_{H}(\Omega, \mathcal{A},P)$ denote the space of $H$-valued random functions $X$ such that
\[
v_{p}(X) = (E[\|X\|^{p}])^{1/p} = \left(E\left[\left(\int_{[0,1]^d} X^{2}(t)dt\right)^{p/2}\right]\right)^{1/p}<\infty. 
\]  

\subsection{Local stationarity}
Intuitively, a functional time series $\{X_{t,T}\}_{t \in \mathbb{Z}}$ in $L^{p}_{H}$ is locally stationary if it behaves approximately stationary in local time. We refer to \cite{DaSu06} and \cite{DaRiWu19} for the idea of locally stationary time series and its general theory, and \cite{vaEi18} and \cite{Auva20} for the notion of local stationarity for a Hilbert space-valued time series. To ensure that it is locally stationary around each rescaled time point $u \in [0,1]$, a process $\{X_{t,T}\}$ in $L^{p}_{H}$ can be approximated by a stationary functional time series $\{X_{t}^{(u)}\}$ in $L^{p}_{H}$ stochastically. This concept can be defined as follows. 

\begin{definition}\label{def: LSfTS}
A sequence of $H$-valued stochastic process $\{X_{t,T}\}_{t=1}^{T}$ in $L^{p}_{H}$ is locally stationary if, for each rescaled time point $u \in [0,1]$, there exists an associated $H$-valued process $\{X_{t}^{(u)}\}_{t \in \mathbb{Z}}$ in $L^{p}_{H}$ with the following properties: 
\begin{itemize}
\item[(i)] $\{X_{t}^{(u)}\}_{t \in \mathbb{Z}}$ is strictly stationary.
\item[(ii)] It holds that 
\begin{align}\label{def: LSfTS-ineq}
\|X_{t,T}-X_{t}^{(u)}\| \leq \left(\left|{t \over T} - u\right| + {1 \over T}\right)U_{t,T}^{(u)}\ a.s.,
\end{align}
for all $1 \leq t \leq T$ where $\{U_{t,T}^{(u)}\}$ is a process of positive variables satisfying $E[(U_{t,T}^{(u)})^{\rho}]<C$ for some $\rho>0$, $C<\infty$ independent of $u, t$, and $T$.
\end{itemize}
\end{definition}

Definition \ref{def: LSfTS} is a natural extension of the notion of local stationarity for real-valued time series introduced in \cite{Da97}.  

\subsection{Dependence structure}

For each $u \in [0,1]$, we assume that $\{X_{t}^{(u)}\}$ is $L^{p}$-$m$-approximable, that is, $X_{t}^{(u)} \in L^{p}_{H}$ is of the form
\[
X_{t}^{(u)}=f_{u}(\varepsilon_{t}, \varepsilon_{t-1},\varepsilon_{t-2},\dots),
\]
where $\varepsilon_{j}$ are i.i.d. elements taking values in a measurable space $S$, and $f_{u}$ is a measurable function $f_{u}: S^{\infty} \to H$. Note that $X_{t}^{(u)}$ is strictly stationary. We also assume that if $\{\varepsilon_{j,k}^{(m)}\}$ is an independent copy of $\{\varepsilon_{j}\}$ defined on the same probability space, then letting
\begin{align}\label{m-approx-ver}
X_{m,t}^{(u)} &= f_{u}(\varepsilon_{t}, \varepsilon_{t-1},\dots, \varepsilon_{t-m+1}, \varepsilon_{t,t-m}^{(m)}, \varepsilon_{t,t-m-1}^{(m)},\dots),
\end{align}
we have 
\[
\sum_{m \geq 1}v_{p}(X_{t}^{(u)}-X_{m,t}^{(u)})<\infty.
\]

We can show that a wide class of functional time series (e.g., functional AR(1) process and functional ARCH(1) process) is $L^{p}$-$m$-approximable under some regularity conditions. Hence the concept of $L^p$-$m$ approximability of functional time series is popular in the literature of functional time series analysis. See \cite{HoKo10} and \cite{HoKo12} for details on the properties of $L^{p}$-$m$-approximable random functions. We also refer to \cite{ZhDe20} and \cite{DeWu21} who consider physical representation of functional time series as a different dependence structure (see, for example, Section 2 in \cite{ZhDe20} for the connection and difference between $L^p$-$m$ approximability and physical dependence).

\section{Main results}\label{Sec: Main}

In this section, we introduce a kernel-based method to estimate the time-varying covariance operator and the time-varying mean function of a locally stationary time series at each rescaled time point $u \in [0,1]$. 

\subsection{Estimation of eigenvalues and eigenfunctions of local covariance operators}

We make the following assumption on the process $\{X_{t,T}\}$. 
\begin{assumption}\label{Ass-M}
The process $\{X_{t,T}\}$ in $L^{4}_{H}$ is locally stationary. In particular, there exists an $L^{4}$-$m$-approximable process $\{X_{t}^{(u)}\}$ in $L^{4}_{H}$ that satisfies Definition \ref{def: LSfTS} with $p=4$ and some $\rho \geq 2$. We also assume that $E[X_{t}^{(u)}] =0$ in $L^{2}$.
\end{assumption}

\noindent
\textbf{Example.} We give an example of locally stationary functional time series that satisfies Assumption \ref{Ass-M}. Let $\{H_k(u;\mathcal{F}_{t}^{(k)})\}_{t \in \mathbb{Z}}$ be stationary time series for each $k \in \mathbb{N}$ and $u \in [0,1]$ that satisfy the following conditions: 
\begin{itemize}
\item[(E1)] $H_{k}(u;\mathcal{F}_{t}^{(k)}) = G_k(u; \zeta_{t}^{(k)}, \zeta_{t-1}^{(k)},\zeta_{t-2}^{(k)},\dots)$ where $\zeta_{t}^{(k)}$ are i.i.d. elements taking values in a measurable space $S$, and $G_k(u; \cdot)$ is a measurable function $G_k(u;\cdot): S^{\infty} \to \mathbb{R}$.
\item[(E2)] $\zeta^{(k)}=\{\zeta_{t}^{(k)}\}_{t \in \mathbb{Z}}$ are mutually independent.
\item[(E3)] There exist random variables $\bar{H}_{k}(\mathcal{F}_{t}^{(k)})$ such that
\begin{align*}
&|H_k(t/T; \mathcal{F}_{t}^{(k)}) - H_k(u;\mathcal{F}_{t}^{(k)})| \leq \left|{t \over T} - u\right|\bar{H}_k(\mathcal{F}_{t}^{(k)}),\\
&\sum_{k \geq 1}E\left[\left|H_{k}(u;\mathcal{F}_{t}^{(k)})\right|^4\right]^{1/2}<\infty,\ \sum_{k \geq 1}E\left[\left|\bar{H}_k(\mathcal{F}_{t}^{(k)})\right|^2\right] <\infty. 
\end{align*}
\item[(E4)] Let $\{\zeta_{t,j}^{(k,m)}\}$ be an independent copy of $\{\zeta_{t}^{(k)}\}$. Assume that
\begin{align*}
\sum_{m \geq 1}\left(\sum_{k \geq 1}E\left[\left(H_k(u;\mathcal{F}_{t}^{(k)}) - H_{k}(u;\mathcal{F}_{t}^{(k,m)})\right)^4\right]^{1/2}\right)^{1/2}<\infty,
\end{align*}
where $H_{k}(u;\mathcal{F}_{t}^{(k,m)}) = G_k(u;\zeta_{t}^{(k)}, \zeta_{t-1}^{(k)}, \cdots, \zeta_{t-m+1}^{(k)}, \zeta_{t,t-m}^{(k,m)}, \zeta_{t,t-m-1}^{(k,m)}, \dots)$. 
\end{itemize}
For a given orthogonal basis $\{\phi_k(s)\}_{k = 1}^{\infty}$ of $L^2([0,1]^d)$, consider the functional time series defined by 
\begin{align*}
X_{t}^{(u)}(s) &= \sum_{k \geq 1}H_k(u;\mathcal{F}_{t}^{(k)})\phi_k(s),\ X_{m,t}^{(u)}(s) = \sum_{k \geq 1}H_k(u;\mathcal{F}_{t}^{(k,m)})\phi_k(s),\\
X_{t,T}(s) &= X_{t}^{(t/T)}(s),\  1 \leq t \leq T.
\end{align*}

\begin{proposition}\label{Prop: example}
Under Conditions (E1)-(E4), the processes $X_{t,T}$ and $X_{t}^{(u)}$ satisfy Assumption \ref{Ass-M}. 
\end{proposition}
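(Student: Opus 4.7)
The plan is to verify the four conditions bundled into Assumption \ref{Ass-M} (membership in $L^4_H$, local stationarity with $\rho = 2$, $L^4$-$m$-approximability, and the fourth-moment bound on $\|X_0^{(u)}\|^2$) by unpacking the basis expansion via Parseval's identity and then applying Minkowski's inequality coordinate-wise in the basis $\{\phi_k\}$. The zero-mean condition can either be absorbed into an implicit centering assumption on $H_k$ or handled by recentering, so I will focus on the distributional / moment content that conditions (E1)--(E4) actually target.

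First I would prove that $X_t^{(u)} \in L^4_H$. Since $\{\phi_k\}$ is orthonormal, Parseval gives $\|X_t^{(u)}\|^2 = \sum_{k \geq 1} |H_k(u;\mathcal{F}_t^{(k)})|^2$, and Minkowski's inequality applied to the $L^2(\Omega)$ norm of this sum yields
\[
(E[\|X_t^{(u)}\|^4])^{1/2} \;\leq\; \sum_{k \geq 1} (E[|H_k(u;\mathcal{F}_t^{(k)})|^4])^{1/2},
\]
which is finite by (E3). This simultaneously gives $E[(\int (X_0^{(u)}(s))^2\,ds)^2] = E[\|X_0^{(u)}\|^4] < \infty$. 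Strict stationarity of $\{X_t^{(u)}\}_{t \in \mathbb{Z}}$ follows from (E1)--(E2): each coordinate sequence $\{H_k(u;\mathcal{F}_t^{(k)})\}_t$ is a Bernoulli shift of an i.i.d. sequence, and the sequences are mutually independent across $k$, so the joint law is shift-invariant.

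Next I would establish local stationarity. By Parseval and (E3),
\[
\|X_{t,T} - X_t^{(u)}\|^2 \;=\; \sum_{k \geq 1} |H_k(t/T;\mathcal{F}_t^{(k)}) - H_k(u;\mathcal{F}_t^{(k)})|^2 \;\leq\; \left|\frac{t}{T} - u\right|^2 \sum_{k \geq 1} \bar{H}_k(\mathcal{F}_t^{(k)})^2.
\]
Setting $U_{t,T}^{(u)} := (\sum_{k \geq 1} \bar{H}_k(\mathcal{F}_t^{(k)})^2)^{1/2}$, we obtain $\|X_{t,T} - X_t^{(u)}\| \leq |t/T - u|\, U_{t,T}^{(u)} \leq (|t/T - u| + 1/T)\, U_{t,T}^{(u)}$, and by (E3) the second moment $E[(U_{t,T}^{(u)})^2] = \sum_{k \geq 1} E[\bar{H}_k(\mathcal{F}_t^{(k)})^2]$ is bounded uniformly in $u, t, T$, which is the $\rho = 2$ condition required in Definition \ref{def: LSfTS}.

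Finally I would verify $L^4$-$m$-approximability. Again by Parseval,
\[
\|X_t^{(u)} - X_{m,t}^{(u)}\|^2 \;=\; \sum_{k \geq 1} |H_k(u;\mathcal{F}_t^{(k)}) - H_k(u;\mathcal{F}_t^{(k,m)})|^2,
\]
and Minkowski's inequality (applied in $L^2(\Omega)$ to the sum of nonnegative summands) gives
\[
v_4(X_t^{(u)} - X_{m,t}^{(u)}) \;\leq\; \left( \sum_{k \geq 1} (E[|H_k(u;\mathcal{F}_t^{(k)}) - H_k(u;\mathcal{F}_t^{(k,m)})|^4])^{1/2} \right)^{1/2}.
\]
Summing in $m$ and invoking (E4) then yields $\sum_{m \geq 1} v_4(X_t^{(u)} - X_{m,t}^{(u)}) < \infty$, completing the proof. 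The only genuinely delicate step is tracking the correct order of norms in applying Minkowski's inequality to the doubly-indexed sum in the $L^4$-$m$-approximability bound; everything else is a direct computation using Parseval and the quantitative inputs (E3), (E4).
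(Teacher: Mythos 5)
Your proof is correct and follows essentially the same route as the paper's: reduce everything to the coordinate processes $H_k$ via Parseval, bound $E[\|X_t^{(u)}-X_{m,t}^{(u)}\|^4]$ and $E[\|X_t^{(u)}\|^4]$ using (E3)--(E4), and obtain the local-stationarity bound with $U_{t,T}^{(u)}=(\sum_{k\geq 1}\bar{H}_k^2(\mathcal{F}_t^{(k)}))^{1/2}$ and $\rho=2$ from (E3). Your use of Minkowski's inequality in $L^2(\Omega)$ is just a repackaging of the paper's expand-the-square-and-apply-Cauchy--Schwarz step, so the two arguments coincide; your explicit remarks on strict stationarity and centering are harmless additions.
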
 

Now we move on to our main results. Let $C_{u}: H \to H$ be the covariance operator of a zero-mean stationary process $\{X_{t}^{(u)}\}$, that is, $C_{u}(\cdot)=E[X_{t}^{(u)} \otimes X_{t}^{(u)}]$. As in the multivariate time series case, $C_{u}$ admits the singular value decomposition
\[
C_{u}(x) = \sum_{j=1}^{\infty}\lambda_{u, j}\langle v_{u,j}, x \rangle v_{u, j},
\]
where $\{\lambda_{u,j}\}$ are the non-negative eigenvalues (in descending order) and $\{v_{u,j}\}$ are the corresponding normalized eigenfunctions, that is, $C_{u}(v_{u,j}) = \lambda_{u,j}v_{u,j}$ and $\|v_{u,j}\| =1$. The $\{v_{u,j}\}$ form an orthonormal
basis of $H$. Hence $X_{t}^{(u)}$ allows for the Karhunen-Lo\'eve representation $X_{t}^{(u)} =\sum_{j=1}^{\infty}\langle X_{t}^{(u)}, v_{u,j} \rangle v_{u,j}$. The coefficients $\langle X_{t}^{(u)}, v_{u,j} \rangle$ in this expansion are called the FPC scores of $X_{t}^{(u)}$.

Consider the empirical local covariance operator 
\begin{align*}
\hat{C}_{u} = {1 \over Th}\sum_{t=1}^{T}K_{1,h}\left(u - {t \over T}\right)X_{t,T} \otimes X_{t,T},
\end{align*}
where $K_{1}$ is a one-dimensional kernel function and we use the notation $K_{1,h}(x)=K_{1}(x/h)$, and $h$ is a sequence of positive constants (bandwidths) such that $h=h_{T} \to 0$ as $T \to \infty$. When the mean function of $X_{t}^{(u)}$ is unknown, that is, $E[X_{t}^{(u)}]= m(u, \cdot)$, then we can estimate $m(u, \cdot)$ by a kernel-based locally weighted sample mean $\bar{X}_{T}^{(u)}={1 \over Th}\sum_{t=1}^{T}K_{1,h}(u-t/T)X_{t,T}$. In this case, $\hat{C}_{u}$ is given by 
\[
\hat{C}_{u} = {1 \over Th}\sum_{t=1}^{T}K_{1,h}\left(u - {t \over T}\right)\left(X_{t,T}-\bar{X}_{T}^{(u)}\right) \otimes \left(X_{t,T}-\bar{X}_{T}^{(u)}\right).
\]
Under some regularity conditions, we can show $E\left[\|\bar{X}_{T}^{(u)}-m(u, \cdot)\|^{2}\right] = O\left(h^{2} + {1 \over Th} + {1 \over T^{2}h^{4}}\right)$. We discuss the asymptotic properties of $\bar{X}_{T}^{(u)}$ in Section \ref{subsec: weighted mean}. See Theorem \ref{Thm2} below and its proof for details. 
We make the following assumption on the kernel function $K_{1}$. 
\begin{assumption}\label{Ass-KB}
The kernel $K_{1}$ is symmetric around zero, bounded, and has a compact support, i.e., $K_{1}(v) = 0$ for all $|v|>C_{1}$ for some $C_{1}<\infty$. Moreover, $\int K_{1}(z)dz=1$ and $K_{1}$ is Lipschitz continuous, i.e., $|K_{1}(v_{1}) - K_{1}(v_{2})| \leq C_{2}|v_{1} - v_{2}|$ for some $C_{2}<\infty$ and all $v_{1},v_{2} \in \mathbb{R}$. 
\end{assumption}

The next result provides the convergence rate of the kernel estimator $\hat{C}_{u}$ at each rescaled time point $u$. 
\begin{theorem}\label{Thm1}
Under Assumptions \ref{Ass-M} and \ref{Ass-KB}, we have
\begin{align}\label{error-bound-cov-op-Thm1}
\|\hat{C}_{u} - C_{u}\|_{\mathcal{L}} &= O_{P}\left(h + \sqrt{{1 \over Th}} + {1 \over Th^{2}}\right)
\end{align}
for $u \in [C_{1}h, 1-C_{1}h]$.
\end{theorem}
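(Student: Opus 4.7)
The plan is to decompose $\hat{C}_u - C_u$ into three pieces corresponding to local-stationarity approximation, stochastic fluctuation, and deterministic Riemann-sum bias. Introducing the oracle operator
\[
\tilde{C}_u := \frac{1}{Th}\sum_{t=1}^T K_{1,h}(u-t/T)\, X_t^{(u)}\otimes X_t^{(u)},
\]
I would write
\[
\hat{C}_u - C_u = \underbrace{(\hat{C}_u - \tilde{C}_u)}_{A_1} + \underbrace{(\tilde{C}_u - E[\tilde{C}_u])}_{A_2} + \underbrace{(E[\tilde{C}_u] - C_u)}_{A_3}
\]
and bound each piece separately. Since $\|\cdot\|_{\mathcal{L}}\leq \|\cdot\|_{\mathcal{S}}$, it is convenient to work throughout with the Hilbert--Schmidt norm, exploiting the inner-product structure of $\mathcal{S}$.

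For $A_1$, I would use the identity $a\otimes a - b\otimes b = (a-b)\otimes a + b\otimes (a-b)$ combined with the local-stationarity bound (\ref{def: LSfTS-ineq}). On the support of $K_{1,h}(u-\cdot/T)$ we have $|t/T-u|\leq C_1 h$, hence $\|X_{t,T}-X_t^{(u)}\|\leq (C_1 h + 1/T) U_{t,T}^{(u)}$. Cauchy--Schwarz together with the $L^4$-moment bound on $X_t^{(u)}$ from Assumption \ref{Ass-M} and the $\rho$-th moment bound ($\rho\geq 2$) on $U_{t,T}^{(u)}$ gives $E\|A_1\|_{\mathcal{S}} \lesssim (h+1/T)\cdot \tfrac{1}{Th}\sum_t K_{1,h}(u-t/T) = O(h+1/T)$, so $\|A_1\|_{\mathcal{L}}=O_P(h+1/T)$.

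For the stochastic term $A_2$, the crucial ingredient is that the tensor-product sequence $Z_t := X_t^{(u)}\otimes X_t^{(u)} - C_u$ inherits $L^2$-$m$-approximability in $\mathcal{S}$ from the $L^4$-$m$-approximability of $\{X_t^{(u)}\}$. This follows by applying the tensor-product identity to $X_t^{(u)}\otimes X_t^{(u)} - X_{m,t}^{(u)}\otimes X_{m,t}^{(u)}$ and invoking Cauchy--Schwarz in $L^4_H$, which yields $\sum_m v_2(Z_t - Z_{m,t}) < \infty$ and, by the standard \cite{HoKo10} argument, absolute summability of $\gamma(k) := E\langle Z_0,Z_k\rangle_{\mathcal{S}}$. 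Stationarity of $\{X_t^{(u)}\}$ then gives
\[
E\|A_2\|_{\mathcal{S}}^2 = \frac{1}{T^2 h^2}\sum_{t,s} K_{1,h}(u-t/T)K_{1,h}(u-s/T)\,\gamma(t-s) \leq \frac{\|K_1\|_\infty^2(2C_1hT+1)}{T^2h^2}\sum_k |\gamma(k)| = O\!\left(\frac{1}{Th}\right),
\]
whence $\|A_2\|_{\mathcal{L}}=O_P(1/\sqrt{Th})$.

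For the deterministic bias $A_3$, stationarity of $\{X_t^{(u)}\}$ gives $E[\tilde{C}_u] = \bigl(\tfrac{1}{Th}\sum_t K_{1,h}(u-t/T)\bigr)C_u$, so that $A_3 = \bigl(\tfrac{1}{Th}\sum_t K_{1,h}(u-t/T)-1\bigr)C_u$. The bracket is the Riemann-sum error in approximating $\tfrac{1}{h}\int_0^1 K_{1,h}(u-x)dx = \int K_1(z)dz = 1$, valid for $u\in[C_1h,1-C_1h]$. With the Lipschitz constant of $K_{1,h}$ equal to $C_2/h$, the Riemann-sum error is $O(1/(Th^2))$, and since $\|C_u\|_{\mathcal{L}}<\infty$ (by Assumption \ref{Ass-M}) this yields $\|A_3\|_{\mathcal{L}}=O(1/(Th^2))$. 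Combining the three bounds yields (\ref{error-bound-cov-op-Thm1}). The main obstacle I anticipate is the $A_2$ step, specifically verifying the $L^2$-$m$-approximability of the tensor-squared sequence in $\mathcal{S}$ and the resulting absolute summability of the Hilbert--Schmidt autocovariances, since this is where the $L^4$ moment assumption and the summability condition of Assumption \ref{Ass-M} are essential and where any slack in the constants propagates into the final variance bound.
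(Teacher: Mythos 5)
Your proposal is correct and follows essentially the same route as the paper's proof: the same decomposition into $\hat{C}_u-\tilde{C}_u$, $\tilde{C}_u-E[\tilde{C}_u]$, and $E[\tilde{C}_u]-C_u$, the tensor-product identity plus the local-stationarity bound for the first piece, summable Hilbert--Schmidt autocovariances of $X_t^{(u)}\otimes X_t^{(u)}-C_u$ (obtained from $L^4$-$m$-approximability via the coupling with $X_{k,k}^{(u)}$) for the $O_P(1/\sqrt{Th})$ variance term, and the Riemann-sum error for the $O(1/(Th^2))$ bias. The only difference is presentational: where you invoke the standard \cite{HoKo10} argument for the summability of $E\langle Z_0,Z_k\rangle_{\mathcal{S}}$, the paper carries out that computation explicitly, and it packages the Riemann-sum bound as Lemma \ref{lem: g}.
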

The first term $h$ in the error bound (\ref{error-bound-cov-op-Thm1}) comes from the local stationarity of $X_{t,T}$, i.e., the approximation error of $X_{t,T}$ by $X_{t}^{(u)}$, and the second and the third terms correspond to variance and bias terms, respectively. 

Let $\{\hat{v}_{u,j}\}_{j=1}^{q}$ be the eigenfunctions of $\hat{C}_{u}$ corresponding to the $q$ largest eigenvalues and set $\hat{c}_{u,j}$ as 
\[
\hat{c}_{u,j}=\text{sign}(\langle \hat{v}_{u,j}, v_{u,j}\rangle).
\]  
Further, let $\{\hat{\lambda}_{u,j}\}$ be estimators of $\{\lambda_{u,j}\}$ that satisfy
\[
\hat{C}_{u}(\hat{v}_{u,j})=\hat{\lambda}_{u,j}\hat{v}_{u,j},\ 1\leq j \leq q.
\]
The next result immediately follows from Theorem \ref{Thm1}. It shows that $\{\hat{\lambda}_{u,j}\}_{j=1}^{q}$ are consistent estimators of $\{\lambda_{u,j}\}_{j=1}^{q}$, and $\{\hat{v}_{u,j}\}_{j=1}^{q}$ consistently estimates $\{v_{u,k}\}_{j=1}^{q}$ up to sign change.
\begin{corollary}\label{Cor1}
Suppose $\lambda_{u,1}>\lambda_{u,2}>\dots>\lambda_{u,q}>\lambda_{u,q+1}$. Under Assumptions \ref{Ass-M} and \ref{Ass-KB}, we have
\begin{align}
\max_{1 \leq j \leq q}|\hat{\lambda}_{u,j} - \lambda_{u,j}| &= O_{P}\left(h + \sqrt{{1 \over Th}} + {1 \over Th^{2}}\right), \label{rate1}\\ 
\max_{1 \leq j \leq q}\|\hat{c}_{u,j}\hat{v}_{u,j} - v_{u,j}\| &= O_{P}\left(h + \sqrt{{1 \over Th}} + {1 \over Th^{2}}\right) \label{rate2}
\end{align}
for $u \in [C_{1}h, 1-C_{1}h]$. 
\end{corollary}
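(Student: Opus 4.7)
The plan is to transfer the operator-norm convergence rate provided by Theorem \ref{Thm1} into bounds for individual eigenvalues and eigenfunctions via standard perturbation inequalities for self-adjoint compact operators, as collected in Chapter 4 of \cite{Bo00}. Both $C_u$ and $\hat{C}_u$ are self-adjoint and Hilbert--Schmidt (in particular compact), so their spectral theory applies.

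For (\ref{rate1}), I would invoke the Weyl-type bound
\[
\max_{1 \leq j \leq q}|\hat{\lambda}_{u,j} - \lambda_{u,j}| \leq \|\hat{C}_{u} - C_{u}\|_{\mathcal{L}},
\]
valid for the ordered eigenvalues of self-adjoint compact operators. Combined with Theorem \ref{Thm1}, this immediately yields the rate $h + \sqrt{1/(Th)} + 1/(Th^{2})$.

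For (\ref{rate2}), the hypothesis $\lambda_{u,1} > \lambda_{u,2} > \cdots > \lambda_{u,q} > \lambda_{u,q+1}$ means each of the top $q$ eigenspaces is one-dimensional, with a strictly positive spectral gap
\[
\alpha_{u,j} = \min_{1 \leq k \leq q+1,\, k \neq j}|\lambda_{u,j} - \lambda_{u,k}|>0,\quad 1 \leq j \leq q.
\]
Under this separation, Lemma 4.3 of \cite{Bo00} provides
\[
\|\hat{c}_{u,j}\hat{v}_{u,j} - v_{u,j}\| \leq \frac{2\sqrt{2}}{\alpha_{u,j}}\|\hat{C}_{u} - C_{u}\|_{\mathcal{L}}
\]
for each $1 \leq j \leq q$, where the sign correction $\hat{c}_{u,j}$ is exactly the one needed to align $\hat{v}_{u,j}$ with $v_{u,j}$ since eigenfunctions are only unique up to sign. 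Taking the maximum over $j$ with constant $2\sqrt{2}/\min_{1 \leq j \leq q}\alpha_{u,j}$ and applying Theorem \ref{Thm1} produces the stated rate.

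In short, the corollary is essentially a direct corollary in the strict sense: no new probabilistic work is needed, only the spectral stability inequalities together with the operator-norm bound established previously. The only minor point to verify is that, since the top $q$ eigenvalues of $C_u$ are strictly separated and $\|\hat{C}_u - C_u\|_{\mathcal{L}} = o_P(1)$, the top $q$ eigenvalues of $\hat{C}_u$ are asymptotically well defined and one-dimensional with probability tending to one, so that $\hat{c}_{u,j}$ and $\hat{v}_{u,j}$ are well defined with probability approaching one; this is immediate from (\ref{rate1}). There is no real obstacle here, just a clean application of perturbation theory.
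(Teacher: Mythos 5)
Your argument is correct and follows essentially the same route as the paper: both transfer the operator-norm rate of Theorem \ref{Thm1} to the eigenvalues via the Weyl-type bound $|\hat{\lambda}_{u,j}-\lambda_{u,j}|\leq \|\hat{C}_{u}-C_{u}\|_{\mathcal{L}}$ and to the sign-corrected eigenfunctions via the $2\sqrt{2}/\alpha_{u,j}$ perturbation inequality (your citation of Bosq's Lemma 4.3 is the same inequality the paper takes from Lemma 2.3 of \cite{HoKo12}, and your gap $\alpha_{u,j}$ coincides with the paper's under the ordering of the eigenvalues). No gap.
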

The convergence rates (\ref{rate1}) and (\ref{rate2}) are optimized by choosing $h \sim T^{-1/3}$ and the optimized rates are 
\begin{align*}
|\hat{\lambda}_{u,j} - \lambda_{u,j}| &= O_{P}\left(T^{-1/3}\right),\ \|\hat{c}_{u,j}\hat{v}_{u,j} - v_{u,j}\| = O_{P}\left(T^{-1/3}\right). 
\end{align*}

\subsection{Asymptotic normality of locally weighted sample means}\label{subsec: weighted mean}

Now we make the following assumption instead of Assumption \ref{Ass-M}, 
\begin{assumption}\label{Ass-M2}
The process $\{X_{t,T}\}$ in $L^{2}_{H}$ is locally stationary. In particular, there exists an $L^{2}$-$m$-approximable process $\{X_{t}^{(u)}\}$ in $L^{2}_{H}$ that satisfies $E[X_{t}^{(u)}]=0$ in $L^{2}$ and satisfies Definition \ref{def: LSfTS} with $p=2$ and some $\rho \geq 2$.
\end{assumption}
Define 
\[
\bar{X}_{T}^{(u)} = {1 \over Th}\sum_{t=1}^{T}K_{1,h}\left(u-{t \over T}\right)X_{t,T}. 
\]
The next result is a central limit theorem of $\bar{X}_{T}^{(u)}$. 
\begin{theorem}\label{Thm2}
Suppose $Th \wedge  {1 \over Th^{3}} \to \infty$ as $T \to \infty$. Under Assumptions \ref{Ass-KB} and \ref{Ass-M2}, for each $u \in (0,1)$, we have
\begin{align*}
\sqrt{Th}\bar{X}_{T}^{(u)} &\stackrel{d}{\to} G^{(u)}\ \text{in $L^{2}$},
\end{align*}
where $G^{(u)}$ is a Gaussian element in $L^2$ with $E[G^{(u)}(s)]=0$ and with the covariance kernel function $E[G^{(u)}(s_{1})G^{(u)}(s_{2})] = c^{(u)}(s_{1},s_{2})\int K_{1}^{2}(z)dz$ where 
\begin{align*}
c^{(u)}(s_{1},s_{2}) &= E[X_{0}^{(u)}(s_{1})X_{0}^{(u)}(s_{2})] + \sum_{t \geq 1}E[X_{0}^{(u)}(s_{1})X_{t}^{(u)}(s_{2})] + \sum_{t \geq 1}E[X_{0}^{(u)}(s_{2})X_{t}^{(u)}(s_{1})]. 
\end{align*}
\end{theorem}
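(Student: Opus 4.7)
My approach proceeds in three stages. \emph{Stage one} replaces $\bar{X}_{T}^{(u)}$ by the fully stationary version
\[
\widetilde{X}_{T}^{(u)} \;:=\; \frac{1}{Th}\sum_{t=1}^{T} K_{1,h}\Bigl(u - \tfrac{t}{T}\Bigr) X_{t}^{(u)}.
\]
Because $K_{1,h}(u - t/T)$ vanishes unless $|u - t/T| \leq C_{1}h$, the approximation bound in Definition~\ref{def: LSfTS}(ii) gives
\[
\|\bar{X}_{T}^{(u)} - \widetilde{X}_{T}^{(u)}\| \;\leq\; \Bigl(C_{1}h + \tfrac{1}{T}\Bigr) \cdot \frac{1}{Th}\sum_{t=1}^{T} K_{1,h}\Bigl(u - \tfrac{t}{T}\Bigr) U_{t,T}^{(u)}.
\]
Markov's inequality and the uniform bound on $E[(U_{t,T}^{(u)})^{\rho}]$ show the averaged sum is $O_{p}(1)$; the bandwidth condition $1/(Th^{3})\to\infty$ is equivalent to $Th^{3}\to 0$, so $\sqrt{Th}(h + 1/T) \to 0$ and hence $\sqrt{Th}\,(\bar{X}_{T}^{(u)} - \widetilde{X}_{T}^{(u)}) = o_{p}(1)$ in $H$. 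It therefore suffices to establish the CLT for $\sqrt{Th}\,\widetilde{X}_{T}^{(u)}$.

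\emph{Stage two} is an $m$-dependent coupling argument. Let $\widetilde{X}_{T,m}^{(u)} := (Th)^{-1}\sum_{t} K_{1,h}(u - t/T) X_{m,t}^{(u)}$ with $X_{m,t}^{(u)}$ as in (\ref{m-approx-ver}); by the construction of the coupling, $\{X_{m,t}^{(u)}\}_{t}$ is strictly stationary and $m$-dependent in $H$. For each fixed $m$ I would first show $\sqrt{Th}\,\widetilde{X}_{T,m}^{(u)} \stackrel{d}{\to} G_{m}^{(u)}$ in $H$ for a Gaussian element $G_{m}^{(u)}$ whose covariance kernel is the $m$-truncation of the target kernel. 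Finite-dimensional convergence reduces, via Cram\'er--Wold, to a real-valued weighted CLT: each projection $\langle X_{m,t}^{(u)}, \phi\rangle$ is a real-valued $m$-dependent stationary sequence, and a Hoeffding--Robbins-type CLT applies to the triangular array with weights $K_{1,h}(u-t/T)/\sqrt{Th}$, which are uniformly $O((Th)^{-1/2})$ over the $\asymp Th$ effective terms (so Lindeberg is trivial). Tightness in $L^{2}(D)$ follows from trace-norm convergence of the associated covariance operators to that of $G_{m}^{(u)}$; this uses the Lipschitz property of $K_{1}$ to absorb the shift $k/T$ when comparing $K_{1,h}(u-t/T)$ with $K_{1,h}(u-(t-k)/T)$.

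To pass to the full process I would control the truncation error by a standard weighted-sum estimate for $L^{2}$-$m$-approximable sequences:
\[
E\Bigl[\bigl\|\sqrt{Th}\,(\widetilde{X}_{T}^{(u)} - \widetilde{X}_{T,m}^{(u)})\bigr\|^{2}\Bigr] \;\lesssim\; \Bigl(\sum_{j\geq m} v_{2}(X_{t}^{(u)} - X_{j,t}^{(u)})\Bigr)^{2},
\]
whose right-hand side vanishes as $m\to\infty$ uniformly in $T$ by Assumption~\ref{Ass-M2}. Combined with $G_{m}^{(u)}\to G^{(u)}$ in $H$ (from convergence of covariance kernels plus absolute summability of the autocovariances of $X_{t}^{(u)}$), a standard double-limit argument delivers $\sqrt{Th}\,\widetilde{X}_{T}^{(u)} \stackrel{d}{\to} G^{(u)}$. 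The explicit limit kernel is identified via
\[
\Cov\bigl(\sqrt{Th}\,\widetilde{X}_{T}^{(u)}\bigr)(s_{1},s_{2}) = \frac{1}{Th}\sum_{t,s=1}^{T} K_{1,h}\Bigl(u - \tfrac{t}{T}\Bigr) K_{1,h}\Bigl(u - \tfrac{s}{T}\Bigr) E\bigl[X_{t}^{(u)}(s_{1})X_{s}^{(u)}(s_{2})\bigr],
\]
where stationarity reduces the inner expectation to a function of $k = t-s$, a Riemann-sum/Lipschitz computation gives $\int K_{1}^{2}(z)\,dz$ for each fixed lag, and $L^{2}$-$m$-approximability supplies the absolute summability needed to interchange the limit and the sum over $k$, yielding $c^{(u)}(s_{1},s_{2})\int K_{1}^{2}$.

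\emph{Main obstacle.} The most delicate step is establishing tightness in $L^{2}(D)$ for the weighted $m$-dependent triangular array inside stage two: finite-dimensional Gaussian convergence is routine, but guaranteeing that the limiting Gaussian element lives in $L^{2}(D)$ (rather than in a larger completion) demands trace-norm convergence of the pre-limit covariance operators, forcing a careful combination of the Lipschitz continuity of $K_{1}$, stationarity of $X_{m,t}^{(u)}$, and the summability $\sum_{j} v_{2}(X_{t}^{(u)} - X_{j,t}^{(u)}) < \infty$, all maintained uniformly in $T$ as $m\to\infty$.
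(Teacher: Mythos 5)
Your overall architecture coincides with the paper's proof: replace $\bar{X}_T^{(u)}$ by the stationary average $\tilde{X}_T^{(u)}$ using Definition \ref{def: LSfTS}(ii) and $Th^3\to0$, couple to the $m$-dependent sequence $\{X_{m,t}^{(u)}\}$, prove a CLT for the weighted $m$-dependent array (Cram\'er--Wold plus a Karhunen--Lo\'eve/trace-class truncation to get convergence in $L^2(D)$), identify the truncated kernel $c_m^{(u)}$, show $c_m^{(u)}\to c^{(u)}$ in $L^2(D\times D)$, and conclude by the double-limit criterion (Billingsley, Theorem 3.2). These stages match the paper's Steps 1--5 almost verbatim.

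The one step I cannot accept as written is your treatment of the $m$-approximation error. You invoke, as a ``standard weighted-sum estimate,'' the bound
\begin{align*}
E\Bigl[\bigl\|\sqrt{Th}\,(\tilde{X}_{T}^{(u)} - \tilde{X}_{T,m}^{(u)})\bigr\|^{2}\Bigr] \;\lesssim\; \Bigl(\sum_{j\geq m} v_{2}\bigl(X_{t}^{(u)} - X_{j,t}^{(u)}\bigr)\Bigr)^{2}\quad\text{uniformly in }T,
\end{align*}
but this is a physical-dependence (Wu-type) inequality: it relies on telescoping $X_t - X_{m,t}$ into martingale-difference projections with summable increment norms, a structure that $L^{p}$-$m$-approximability in the sense of H\"ormann--Kokoszka does not provide. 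Under Assumption \ref{Ass-M2} the only handle on $D_t = X_t^{(u)} - X_{m,t}^{(u)}$ is the single coupling at lag $m$, and the cross terms $E[\langle D_{t_1}, D_{t_2}\rangle]$ must be controlled by the independence trick the paper uses in Step 3: for $k=t_2-t_1\geq 1$, replace $X_{t_2}^{(u)}$ (and likewise $X_{m,t_2}^{(u)}$) by a lag-$k$ coupled copy independent of $(X_{t_1}^{(u)},X_{m,t_1}^{(u)})$, which yields $|E[\langle D_{t_1},D_{t_2}\rangle]|\lesssim v_2(X_0^{(u)}-X_{m,0}^{(u)})\,v_2(X_0^{(u)}-X_{k,0}^{(u)})$ and hence a bound of order $v_2(X_0^{(u)}-X_{m,0}^{(u)})\bigl(v_2(X_0^{(u)}-X_{m,0}^{(u)})+\sum_{k\geq1}v_2(X_0^{(u)}-X_{k,0}^{(u)})\bigr)$ after the kernel normalization. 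This is generally \emph{not} dominated by the squared tail sum you wrote (for rapidly decaying $v_2$ the tail sum squared is of order $v_2^2(X_0^{(u)}-X_{m,0}^{(u)})$, strictly smaller than the bound one can actually prove), so your inequality is unsubstantiated as stated. The good news is that the weaker statement you actually need for Billingsley's theorem,
\begin{align*}
\limsup_{m\to\infty}\limsup_{T\to\infty} E\Bigl[\bigl\|\sqrt{Th}\,(\tilde{X}_{T}^{(u)} - \tilde{X}_{T,m}^{(u)})\bigr\|^{2}\Bigr]=0,
\end{align*}
does follow from the coupling argument just described; replacing your claimed estimate by that argument (which is exactly the paper's Step 3) repairs the proof without altering anything else in your plan.
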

Note that the infinite sums in the definition of the kernel $c^{(u)}$ coverge in $L^{2}([0,1]^d \times [0,1]^d)$, that is, $\int_{[0,1]^d \times [0,1]^d} \left(c^{(u)}(s_{1},s_{2})\right)^{2}ds_{1}ds_{2}<\infty$. See Appendix \ref{Appendix: proof} for details. If $E[X_{t}^{(u)}] = m(u, \cdot) \neq 0$, the same result of Theorem \ref{Thm2} holds by replacing $\bar{X}_{T}^{(u)}$ and $X_{t}^{(u)}$ with $\bar{X}_{T}^{(u)}-m(u, \cdot)$ and $X_{t}^{(u)} - m(u, \cdot)$, respectively. In Section 4, we discuss applications of Theorem \ref{Thm2} and it plays an important role in deriving the limit distributions of statistics for testing the equality of time-varying mean functions of two functional time series.

For the inference on the local mean function $m(u, \cdot)$, we need to estimate the long-run local covariance kernel $c^{(u)}$. We propose the following estimator as an estimator of $c^{(u)}$. 
\begin{align}\label{def: cov-kernel-est}
\hat{c}^{(u)}(s_{1},s_{2}) &= \hat{\gamma}_{0}^{(u)}(s_{1},s_{2}) + \sum_{t=1}^{T-1}K_{2}\left({t \over b}\right)\left\{\hat{\gamma}_{t}^{(u)}(s_{1},s_{2}) + \hat{\gamma}_{t}^{(u)}(s_{2},s_{1})\right\}
\end{align}
where 
\begin{align*}
\hat{\gamma}_{t}^{(u)}(s_{1},s_{2}) &= {1 \over Th}\sum_{j=t+1}^{T}K_{1,h}\left(u-{j \over T}\right)\left(X_{j,T}(s_{1}) - \bar{X}_{T}^{(u)}(s_{1})\right)\left(X_{j-t,T}(s_{2}) - \bar{X}_{T}^{(u)}(s_{2})\right) 
\end{align*}
for $1 \leq t \leq T-1$, $b$ is a sequence of positive constants (bandwidths) such that $b \to \infty$ as $T \to \infty$, and $K_{2}$ is a kernel function that satisfies the following condition. 
\begin{assumption}\label{Ass-KB2}
The kernel $K_{2}$ is bounded, and has a compact support, i.e., $K_{2}(v) = 0$ for all $|v|>C_{2}$ for some $0<C_{2}<\infty$. Moreover, we assume that $K_{2}(0)=1$.
\end{assumption}

For the bandwidth $b$, we make the following assumption. 
\begin{assumption}\label{Ass-b} 
As $T \to \infty$, $\min\{Th, {1 \over Th^{3}}, b, {Th \over b}\} \to \infty$.
\end{assumption}


Under the above assumptions, we can establish the following result.

\begin{theorem}\label{Thm3}
Assume that Assumptions \ref{Ass-M} (with $\rho=4$), \ref{Ass-KB}, \ref{Ass-KB2}, and \ref{Ass-b} are satisfied. Additionally, assume that $\lim_{m \to \infty}mv_{2}(X_{0}^{(u)} - X_{m,0}^{(u)}) = 0$. Then for each $u \in (0,1)$, we have
\begin{align}
\int_{[0,1]^d} \int_{[0,1]^d} \left\{\hat{c}^{(u)}(s_{1},s_{2}) - c^{(u)}(s_{1},s_{2})\right\}^{2}ds_{1}ds_{2} \stackrel{p}{\to} 0\ \text{as $T \to \infty$},
\end{align}
where $c^{(u)}$ is the covariance kernel that appears in Theorem \ref{Thm2}. 
\end{theorem}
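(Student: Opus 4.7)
The plan is to write
\[
\hat{c}^{(u)} - c^{(u)} = \left(\hat{c}^{(u)} - \tilde{c}^{(u)}\right) + \left(\tilde{c}^{(u)} - c^{(u)}\right),
\]
where the ``oracle'' object is
\[
\tilde{c}^{(u)}(s_{1},s_{2}) = \gamma_{0}^{(u)}(s_{1},s_{2}) + \sum_{t=1}^{T-1}K_{2}(t/b)\left\{\gamma_{t}^{(u)}(s_{1},s_{2}) + \gamma_{t}^{(u)}(s_{2},s_{1})\right\}
\]
with $\gamma_{t}^{(u)}(s_{1},s_{2}) = E[X_{0}^{(u)}(s_{1})X_{t}^{(u)}(s_{2})]$, and to control each piece in $L^{2}(D\times D)$.

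The deterministic bias $\tilde{c}^{(u)}-c^{(u)}$ reduces to showing absolute summability $\sum_{t\ge 0}\|\gamma_{t}^{(u)}\|_{L^{2}(D\times D)}<\infty$, after which dominated convergence (pointwise $K_{2}(t/b)\to K_{2}(0)=1$ as $b\to\infty$, $|K_{2}|$ bounded and compactly supported) and the tail estimate $\sum_{t\ge T}\|\gamma_{t}^{(u)}\|_{L^{2}(D\times D)}\to 0$ give $\|\tilde{c}^{(u)}-c^{(u)}\|_{L^{2}(D\times D)}\to 0$. I would obtain the summability via coupling: with $X_{t,t}^{(u)}$ as in (\ref{m-approx-ver}), $X_{t,t}^{(u)}$ depends only on $\varepsilon_{1},\dots,\varepsilon_{t}$ and independent copies, hence is independent of $X_{0}^{(u)}$, so
\[
\gamma_{t}^{(u)}(s_{1},s_{2}) = E\left[X_{0}^{(u)}(s_{1})(X_{t}^{(u)}-X_{t,t}^{(u)})(s_{2})\right];
\]
Cauchy--Schwarz plus Fubini and stationarity of $\{X_{t}^{(u)}\}$ yield $\|\gamma_{t}^{(u)}\|_{L^{2}(D\times D)}\le v_{2}(X_{0}^{(u)})\,v_{2}(X_{0}^{(u)}-X_{t,0}^{(u)})$, and summability follows from $L^{2}$-$m$-approximability.

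For the stochastic part $\hat{c}^{(u)}-\tilde{c}^{(u)}$, I would first transfer from the observed process to its stationary approximation: invoking (\ref{def: LSfTS-ineq}) to replace $X_{j,T}$ with $X_{j}^{(u)}$ inside $\hat{\gamma}_{t}^{(u)}$ costs $O(h)$ on each summand because the window $K_{1,h}$ confines $j$ to $|j/T-u|\lesssim h$; the centering $\bar{X}_{T}^{(u)}$ can be dropped at negligible cost thanks to $E\|\bar{X}_{T}^{(u)}\|^{2}=O(h^{2}+1/(Th)+1/(T^{2}h^{4}))$, as established in the proof of Theorem \ref{Thm2}. After these reductions the remaining quantity is a kernel--HAC-type statistic in the stationary process $\{X_{j}^{(u)}\}$, and the central task is to bound
\[
E\left\|\sum_{t=0}^{T-1}K_{2}(t/b)\left(\hat{\gamma}_{t}^{*(u)}-\gamma_{t}^{(u)}\right)\right\|_{L^{2}(D\times D)}^{2},
\]
where $\hat{\gamma}_{t}^{*(u)}$ is the analogue of $\hat{\gamma}_{t}^{(u)}$ built from $\{X_{j}^{(u)}\}$ with no centering.

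The main technical obstacle is the sharp rate for this last variance: a naive Minkowski estimate yields $O(b^{2}/(Th))$, which does \emph{not} vanish under Assumption \ref{Ass-b}, since $Th/b\to\infty$ only delivers $b/(Th)\to 0$. Instead I would expand the resulting quadruple sum and decompose each term
\[
\Cov\left(X_{j_{1}}^{(u)}(s_{1})X_{j_{2}}^{(u)}(s_{2}),\,X_{j_{3}}^{(u)}(s_{3})X_{j_{4}}^{(u)}(s_{4})\right)
\]
into products of two autocovariances plus a joint fourth cumulant, the former handled by the coupling bound above and the latter by a parallel coupling that requires full $L^{4}$-$m$-approximability (this is why Assumption \ref{Ass-M} is invoked with $\rho=4$). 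Summability over three of the four time indices collapses one factor of $b$ and delivers a variance of order $O(b/(Th)+bh^{2}+b/(T^{2}h^{4}))$, which vanishes under Assumption \ref{Ass-b}. The additional hypothesis $\lim_{m\to\infty}m\,v_{2}(X_{0}^{(u)}-X_{m,0}^{(u)})=0$ enters precisely at this step, making the cumulant and autocovariance sums uniform enough in the HAC lag range to close the estimate without loss.
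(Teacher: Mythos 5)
Your overall decomposition (oracle kernel-weighted autocovariances plus a stochastic HAC-type remainder), and your bound for the deterministic part via the coupling estimate $\|\gamma_{t}^{(u)}\|_{L^{2}(D\times D)}\le v_{2}(X_{0}^{(u)})\,v_{2}(X_{0}^{(u)}-X_{t,0}^{(u)})$, are sound and essentially coincide with estimates the paper also uses. The genuine gap is in the stochastic part. You rightly note that Minkowski only gives $O(b^{2}/(Th))$, but your fix hinges on expanding the second moment into covariance products plus fourth-order joint cumulants and asserting that ``summability over three of the four time indices collapses one factor of $b$.'' For the covariance-product terms this does work, but for the cumulant term it requires absolute summability of $\mathrm{cum}\bigl(X_{0}^{(u)},X_{k_{1}}^{(u)},X_{k_{2}}^{(u)},X_{k_{3}}^{(u)}\bigr)$ over three free lags, and this is not implied by the hypotheses of the theorem: the natural coupling bound (replace the block beyond the largest gap $g$ by an independent copy, which annihilates the joint cumulant) decays only in $g$, while the number of lag configurations whose largest gap equals $g$ grows like $g^{2}$, so you would need roughly $\sum_{g}g^{2}v_{4}(X_{0}^{(u)}-X_{g,0}^{(u)})<\infty$ --- strictly stronger than $L^{4}$-$m$-approximability together with $m\,v_{2}(X_{0}^{(u)}-X_{m,0}^{(u)})\to 0$. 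The paper avoids cumulants entirely: it replaces $X_{j}^{(u)}$ by the $m$-dependent coupled process $X_{m,j}^{(u)}$, treats the finitely many lags $t\le m$ by the weighted ergodic theorem of Hanson and Pledger (no variance computation at all), treats lags $m<t\lesssim b$ by the exact independence of $X_{m,j}^{(u)}$ and $X_{m,j-t}^{(u)}$, which leaves only $O(bTh)$ nonzero terms in the expanded square and yields $O(b/(Th))$, and then sends $T\to\infty$ followed by $m\to\infty$. The extra hypothesis $\lim_{m\to\infty}m\,v_{2}(X_{0}^{(u)}-X_{m,0}^{(u)})=0$ enters exactly there, to kill the coupling error accumulated over the first $m$ lags (a sum of $m$ terms each of size $v_{2}(X_{0}^{(u)}-X_{m,0}^{(u)})$), not, as you suggest, to make cumulant sums uniform over the HAC lag range.

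Two secondary problems. First, your claimed final rate contains $b/(T^{2}h^{4})$, which does not vanish under Assumption \ref{Ass-b}: with $h\sim T^{-1/2}$ and $b\sim T^{1/4}$ all constraints of Assumption \ref{Ass-b} hold, yet $b/(T^{2}h^{4})\sim T^{1/4}\to\infty$. In the zero-mean setting of Assumption \ref{Ass-M} the correct bound is $E\|\bar{X}_{T}^{(u)}\|^{2}=O(h^{2}+1/(Th))=O(1/(Th))$ (the $1/(T^{2}h^{4})$ term belongs to the nonzero-mean discussion), so the term should not arise, but as written your estimate does not close under the stated assumptions. Second, the replacement of $X_{j,T}$ by $X_{j}^{(u)}$ costs $O(h)$ per summand but is applied inside each of the $O(b)$ kernel-weighted lags; you must track this accumulation (the paper's Step 3 shows the relevant squared-$L^{2}$ errors are of order $h^{2}b$, $b/(Th)$, $b^{2}h/T$, and so on, each vanishing under Assumption \ref{Ass-b}) rather than declare the replacement negligible summand by summand.
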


Theorem \ref{Thm3} enables us to construct local FPCA-based statistics for testing the equality of the local mean functions. See Section \ref{Sec: Applications} for details. 

\section{Applications}\label{Sec: Applications}

In this section, we discuss two possible applications of our results. The first is the estimation of the local mean function of the locally stationary functional time series. The second is a two sample test for the difference of local mean functions, which generalizes the results of \cite{HoKoRe13} for the strictly stationary case to our settings. 

\subsection{Estimation of local mean functions}
Let $D$ be a compact subset of $\mathbb{R}^{d}$ and let $H=L^{2}$, the space of real-valued square integrable functions on $D$. Consider the following model: 
\begin{align}\label{model: spatio-temporal}
X_{t,T}(s) &= m\left({t \over T},s \right) + v_{t,T}(s),\ s \in D
\end{align}
where $m(u,\cdot)$ is an element of $H$ and $\{v_{t,T}\}$ is a zero-mean $H$-valued locally stationary process that satisfies Assumption \ref{Ass-M2} with $\{X_{t,T}\}=\{v_{t,T}\}$ and $\{X_{t}^{(u)}\} = \{v_{t}^{(u)}\}$. Assume that $m(u,\cdot)$ is Lipschitz continuous with respect to the norm $\|\cdot\|$ as a function of $u$, i.e., there exists a positive constant $C_{m}$ such that $\|m(v,\cdot)-m(u,\cdot)\| \leq C_{m}|u-v|$ for $u,v \in [0,1]$. Let $X_{t}^{(u)}(s)=m(u,s)+v_{t}^{(u)}(s)$. Then for each $u \in [0,1]$, $\{X_{t}^{(u)}\}$ is a $L^{2}$-$m$-approximable random functions and 
\begin{align*}
\|X_{t,T}-X_{t}^{(u)}\| &\leq \|m(t/T,\cdot)-m(u,\cdot)\|  + \|v_{t,T} - v_{t}^{(u)}\| \leq (C_{m}+1)\left(\left|u-{t \over T}\right| + {1 \over T}\right)(1+U_{t,T}^{(u)}). 
\end{align*} 
Then $\{X_{t,T}\}$ is a locally stationary $H$-valued functional time series in $L^{2}_{H}$. 
\begin{remark}
When $d=1$, the model (\ref{model: spatio-temporal}), with constant mean function (i.e., $m(u,s) = m(s)$) and stationary error process (i.e. $\{v_{t,T}\}$ itself is stationary), includes many examples, such as financial data (\cite{LiRoSh20}), biology (\cite{ChMu09}) and environmental data (\cite{AuDuNo15}).
When $d=2$, the model (\ref{model: spatio-temporal}) can be seen as a spatio-temporal model with a time-varying mean function. The model is also called a surface time series.  Surface data provide an alternative approach to analyzing spatial data, where the continuous realization of a
random field is considered as a unit. This approach can have computational advantages, especially if the locations where data are observed are dense in space. For example, $\{X_{t,T}\}$ would be an observation of functional surfaces of the daily records of temperature or precipitation with a time-varying mean function. Other examples  include acoustic phonetic data (\cite{AsPiTa17}), data from two surface electroencephalograms (EEG) (\cite{CrCaLuZiPu11}), and proteomics data (\cite{MoBaHeSaGu11}). We also refer to \cite{MaGe20} as an overview of recent developments in functional data including surface time series.
\end{remark}

Let $\hat{m}(u,\cdot)=\bar{X}_{T}^{(u)}$. Applying Theorem \ref{Thm2}, we can estimate the time-varying mean function $m(u, \cdot)$:
\[
E\left[\|\hat{m}(u,\cdot)-m(u,\cdot)\|^{2}\right] = O\left({1 \over Th}\right). 
\]
If we do not assume ${1 \over Th^{3}} \to \infty$ as $T \to \infty$, then we can also show 
\begin{align}\label{conv-rate-mean}
E\left[\|\hat{m}(u,\cdot)-m(u,\cdot)\|^{2}\right] = O\left(h^{2} + {1 \over Th} + {1 \over T^{2}h^{4}}\right). 
\end{align}
The term $h^{2}$ in the convergence rate arises from the local stationarity of $X_{t,T}$, i.e., the approximation error of $X_{t,T}$ by a stationary process $X_{t}^{(u)}$. The second and the third terms correspond to variance and bias terms, respectively. The convergence rate (\ref{conv-rate-mean}) is optimized by choosing $h \sim T^{-1/3}$ and the optimized rate is $E\left[\|\hat{m}(u,\cdot)-m(u,\cdot)\|^{2}\right] = O(T^{-2/3})$. 

\subsection{Two sample problems}

Let $\{X_{t,T}\}$ be an $H$-valued locally stationary functional time series. In principle, we can test whether the underlying functional time series $\{X_{t,T}\}$ is stationary or not. \cite{Auva20} proposed a method for testing the null hypothesis that $\{X_{t,T}\}$ is stationary against the alternative hypothesis that the process is locally stationary. Further, \cite{HoKoRe13} proposed methods for testing the equality of (constant) mean functions of two strictly stationary functional time series. Once the null hypothesis, that $\{X_{t,T}\}$ is stationary, is rejected, their methods for testing the equality of mean functions cannot be applied to the observations. We extend their methods to nonstationary functional time series by using our main results.  

Assume that we have two samples of functional time series $\{X_{t,T_{1}}\}_{t=1}^{T_{1}}$ and $\{Y_{t,T_{2}}\}_{t=1}^{T_{2}}$ follow the location models
\begin{align*}
X_{t,T_{1}}(s) &= m_{1}\left({t \over T_{1}}, s\right) + v_{t,T_{1}}(s),\\ 
Y_{t,T_{2}}(s) &= m_{2}\left({t \over T_{2}}, s\right) + w_{t,T_{2}}(s),
\end{align*}
where $\{v_{t,T_{1}}\}$ and $\{w_{t,T_{2}}\}$ are zero-mean $H$-valued error processes that satisfy Assumption \ref{Ass-M2} and the associated stationary processes $\{v_{t}^{(u)}\}$ and $\{w_{t}^{(u)}\}$ have local long-run covariance kernel functions $c_{1,u}(s_{1},s_{2})$ and $c_{2,u}(s_{1},s_{2})$, which are defined analogously as $c^{(u)}$ in Theorem \ref{Thm2}, respectively. Assume that $m_{1}(u,\cdot)$ and $m_{2}(u, \cdot)$ are Lipschitz continuous with respect to the norm $\|\cdot\|$ as a function of $u$, i.e., there exists a positive constant $C_{m}$ such that $\|m_{j}(v,\cdot)-m_{j}(u,\cdot)\| \leq C_{m}|u-v|$ for $u,v \in [0,1]$, $j=1,2$. We also assume that $\{X_{t,T_{1}}\}_{t=1}^{T_{1}}$ and $\{Y_{t,T_{2}}\}_{t=1}^{T_{2}}$ are independent. Our interest is in the hypothesis of the form
\begin{align}\label{def: two-sample-test}
H_{0}:\ m_{1}(u,\cdot) = m_{2}(u,\cdot)\ \text{and}\ H_{1}:\ m_{1}(u,\cdot) \neq m_{2}(u,\cdot) 
\end{align}
in the space $H=L^{2}$ for fixed $u \in (0,1)$. The null hypothesis $H_{0}$ means that $\|m_{1}(u,\cdot) - m_{2}(u,\cdot)\| = 0$ and the alternative means that $\|m_{1}(u,\cdot) - m_{2}(u,\cdot)\| > 0$.

Define 
\begin{align*}
\bar{X}_{T_{1}}^{(u)} = {1 \over T_{1}h}\sum_{t=1}^{T_{1}}K_{1,h}\left(u - {t \over T_{1}}\right)X_{t,T_{1}},\ \bar{Y}_{T_{2}}^{(u)} = {1 \over T_{2}h}\sum_{t=1}^{T_{2}}K_{1,h}\left(u - {t \over T_{2}}\right)Y_{t,T_{2}}.
\end{align*}
An estimator of the $L^{2}$-norm between $m_{1}(u, \cdot)$ and $m_{2}(u,\cdot)$ is given by
\begin{align*}
\|\bar{X}_{T_{1}}^{(u)} - \bar{Y}_{T_{2}}^{(u)}\| &= \left(\int_{[0,1]^d} (\bar{X}_{T_{1}}^{(u)}(s) - \bar{Y}_{T_{2}}^{(u)}(s))^{2}ds\right)^{1/2}.
\end{align*}

The next result follows from Theorem \ref{Thm2} and the continuous mapping theorem. 
\begin{proposition}\label{Prop1}
Suppose $T_1h \wedge {1 \over Th^3} \to \infty$, ${T_{1} \over T_{1} + T_{2}} \to \theta \in (0,1)$ as $T_{1} \wedge T_{2} \to \infty$. Assume that $\{X_{t,T_{1}}\}$ and $\{Y_{t,T_{2}}\}$ satisfy the same assumption of Theorem \ref{Thm2}. Then under $H_{0}$, we have
\begin{align*}
U_{T_{1},T_{2}}^{(u)} &:= {T_{1}T_{2}h \over T_{1} + T_{2}}\|\bar{X}_{T_{1}}^{(u)} - \bar{Y}_{T_{2}}^{(u)}\|^{2} \stackrel{d}{\to} \|Z^{(u)}\|^{2} =: U_{\infty,\infty}^{(u)},
\end{align*}
where $Z^{(u)}$ is a Gaussian process with $E[Z^{(u)}]=0$ and $E[Z^{(u)}(s_{1})Z^{(u)}(s_{2})]= \phi^{(u)}(s_{1},s_{2})\int K^{2}_{1}(w)dw$ where $\phi^{(u)}(s_{1},s_{2}) = (1-\theta)c_{1,u}(s_{1},s_{2}) + \theta c_{2,u}(s_{1},s_{2})$. Additionally, under $H_{1}$, we have 
\begin{align*}
\|\bar{X}_{T_{1}}^{(u)} - \bar{Y}_{T_{2}}^{(u)}\|^{2} \stackrel{p}{\to} \|m_{1}(u,\cdot) - m_{2}(u,\cdot)\|^{2}. 
\end{align*}
Then the test statistics $U_{T_{1},T_{2}}^{(u)} \stackrel{p}{\to} \infty$ as $T_{1} \wedge T_{2} \to \infty$ under $H_{1}$.
\end{proposition}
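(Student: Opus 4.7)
The plan is to treat the two-sample statistic by combining the CLT of Theorem \ref{Thm2} applied separately to each sample, using independence and the continuous mapping theorem. Under $H_{0}$ I would first write
\[
\sqrt{\tfrac{T_{1}T_{2}h}{T_{1}+T_{2}}}\bigl(\bar{X}_{T_{1}}^{(u)} - \bar{Y}_{T_{2}}^{(u)}\bigr)
= \sqrt{\tfrac{T_{2}}{T_{1}+T_{2}}}\,\sqrt{T_{1}h}\bigl(\bar{X}_{T_{1}}^{(u)} - m_{1}(u,\cdot)\bigr)
- \sqrt{\tfrac{T_{1}}{T_{1}+T_{2}}}\,\sqrt{T_{2}h}\bigl(\bar{Y}_{T_{2}}^{(u)} - m_{2}(u,\cdot)\bigr),
\]
since under $H_0$ the deterministic remainder $m_{1}(u,\cdot)-m_{2}(u,\cdot)$ vanishes. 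Applying Theorem \ref{Thm2} to $\{v_{t,T_{1}}\}$ gives $\sqrt{T_{1}h}(\bar{X}_{T_{1}}^{(u)}-m_{1}(u,\cdot)) \stackrel{d}{\to} G_{1}^{(u)}$ in $L^{2}$, where $G_{1}^{(u)}$ is a centered Gaussian process with covariance kernel $c_{1,u}(s_{1},s_{2})\int K_{1}^{2}(w)dw$; the analogous statement holds for $G_{2}^{(u)}$ arising from $\{w_{t,T_{2}}\}$.

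Next I would invoke the independence of the two samples to upgrade the marginal convergences to joint weak convergence in $L^{2}\times L^{2}$, yielding the limit $(G_{1}^{(u)},G_{2}^{(u)})$ with the two components independent. Combined with $T_{1}/(T_{1}+T_{2})\to\theta$, $T_{2}/(T_{1}+T_{2})\to 1-\theta$, and Slutsky's lemma, the linear map $(g_{1},g_{2})\mapsto \sqrt{1-\theta}\,g_{1} - \sqrt{\theta}\,g_{2}$ (continuous from $L^{2}\times L^{2}$ to $L^{2}$) produces
\[
\sqrt{\tfrac{T_{1}T_{2}h}{T_{1}+T_{2}}}\bigl(\bar{X}_{T_{1}}^{(u)} - \bar{Y}_{T_{2}}^{(u)}\bigr) \stackrel{d}{\to} \sqrt{1-\theta}\,G_{1}^{(u)} - \sqrt{\theta}\,G_{2}^{(u)} =: Z^{(u)},
\]
which is centered Gaussian in $L^{2}$ with covariance $\phi^{(u)}(s_{1},s_{2})\int K_{1}^{2}(w)dw$ by a direct variance computation using independence. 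Applying the continuous mapping theorem to the continuous functional $\|\cdot\|^{2}: L^{2}\to\mathbb{R}$ delivers the first claim $U_{T_{1},T_{2}}^{(u)}\stackrel{d}{\to}\|Z^{(u)}\|^{2}$.

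For the consistency statement under $H_{1}$, I would invoke the $L^{2}$-convergence rate for the kernel-based local mean estimator established in the previous subsection (equation (\ref{conv-rate-mean}) applied to each sample), which gives $\bar{X}_{T_{1}}^{(u)}\to m_{1}(u,\cdot)$ and $\bar{Y}_{T_{2}}^{(u)}\to m_{2}(u,\cdot)$ in $L^{2}$ in probability as $T_{1}\wedge T_{2}\to\infty$. Since the two samples are independent, the joint probability convergence is immediate, and another application of the continuous mapping theorem to $(f,g)\mapsto\|f-g\|^{2}$ on $L^{2}\times L^{2}$ yields the stated limit $\|m_{1}(u,\cdot)-m_{2}(u,\cdot)\|^{2}$. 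The main technical point throughout is ensuring that the marginal weak convergences of Theorem \ref{Thm2} combine into joint weak convergence in the product space; this follows from the independence of $\{X_{t,T_{1}}\}$ and $\{Y_{t,T_{2}}\}$, but should be stated explicitly so that Slutsky and continuous mapping are applied cleanly in the Hilbert-space setting.
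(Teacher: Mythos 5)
Your argument is correct and is exactly the route the paper intends: the paper omits the proof of Proposition \ref{Prop1}, remarking only that it ``follows from Theorem \ref{Thm2} and the continuous mapping theorem,'' and your write-up fleshes out precisely that — the decomposition with weights $\sqrt{T_2/(T_1+T_2)}$ and $\sqrt{T_1/(T_1+T_2)}$, the nonzero-mean version of Theorem \ref{Thm2} applied to each sample, the upgrade to joint weak convergence in $L^2\times L^2$ via independence (valid since $L^2$ is separable), Slutsky, and $\|\cdot\|^2$ as the continuous functional. Your insistence on stating the joint-convergence step explicitly is a good point that the paper glosses over.

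One small repair is needed in the $H_1$ part. You cite (\ref{conv-rate-mean}), i.e.\ $E[\|\hat m(u,\cdot)-m(u,\cdot)\|^2]=O(h^2+{1\over Th}+{1\over T^2h^4})$, but that bound is the one derived \emph{without} the undersmoothing condition, and its third term ${1\over T^2h^4}=\bigl({1\over Th}\bigr)\bigl({1\over Th^3}\bigr)$ need not vanish under the assumptions of Theorem \ref{Thm2} actually in force here (e.g.\ $h\sim T^{-1/2}$ gives $Th\to\infty$, ${1\over Th^3}\to\infty$, yet $T^2h^4\equiv 1$). So as written the citation does not deliver $\bar X_{T_1}^{(u)}\stackrel{p}{\to} m_1(u,\cdot)$ for all admissible bandwidths. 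The fix is immediate: either invoke the bound $E[\|\hat m(u,\cdot)-m(u,\cdot)\|^2]=O({1\over Th})$ stated just before (\ref{conv-rate-mean}) under the conditions of Theorem \ref{Thm2}, or simply note that Theorem \ref{Thm2} itself gives tightness of $\sqrt{T_ih}\,(\bar X_{T_1}^{(u)}-m_1(u,\cdot))$ and $\sqrt{T_2h}\,(\bar Y_{T_2}^{(u)}-m_2(u,\cdot))$, hence $\|\bar X_{T_1}^{(u)}-m_1(u,\cdot)\|=O_P((T_1h)^{-1/2})\to 0$ and likewise for the second sample; then your continuous mapping step finishes the $H_1$ claim unchanged.
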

Under $H_0$, the random variable $U_{\infty,\infty}^{(u)}$ has the representation 
\begin{align*}
U_{\infty,\infty}^{(u)} &\stackrel{d}{=} \sum_{j =1}^{\infty}\eta_{u,j}N_{j}^{2}, 
\end{align*}
where for random variables $A$ and $B$, $A\stackrel{d}{=}B$ denotes that $A$ and $B$ has the same distribution, $\{\eta_{u,j}\}$ is a sequence of the eigenvalues of the covariance operator $\Phi^{(u)}$ defined by $\phi^{(u)}$, and $\{N_{j}\}$ is a sequence of i.i.d. standard normal random variables.  
We can also estimate the covariance kernel function $\phi^{(u)}(s_{1},s_{2})$ by 
\begin{align*}
\hat{\phi}^{(u)}(s_{1},s_{2}) &= (1-\hat{\theta})\hat{c}_{1,u}(s_{1},s_{2}) + \hat{\theta}\hat{c}_{2,u}(s_{1},s_{2}),
\end{align*}
where $\hat{\theta} = {T_{1} \over T_{1}+T_{2}}$, and $\hat{c}_{j,u}(s_{1},s_{2})$, $j=1,2$ are defined analogously as $\hat{c}^{(u)}$ in (\ref{def: cov-kernel-est}). The next result establishes the consistency of $\hat{\phi}^{(u)}$. 
\begin{proposition}\label{Prop2}
Suppose $T_1h \wedge {1 \over Th^3} \to \infty$, ${T_{1} \over T_{1} + T_{2}} \to \theta \in (0,1)$ as $T_{1} \wedge T_{2} \to \infty$. Assume that $\{X_{t,T_{1}}\}$ and $\{Y_{t,T_{2}}\}$ satisfy the same assumption of Theorem \ref{Thm3}. Then we have
\begin{align*}
\int_{[0,1]^d} \int_{[0,1]^d} \left(\hat{\phi}^{(u)}(s_{1},s_{2}) - \phi^{(u)}(s_{1},s_{2})\right)^{2}ds_{1}ds_{2} \stackrel{p}{\to} 0.
\end{align*}
\end{proposition}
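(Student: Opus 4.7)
The plan is to reduce Proposition \ref{Prop2} to two applications of Theorem \ref{Thm3} via a standard triangle inequality argument, handling the randomness of the weight $\hat{\theta}$ by a Slutsky-type step.

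First, I would write the telescoping decomposition
\begin{align*}
\hat{\phi}^{(u)}(s_{1},s_{2}) - \phi^{(u)}(s_{1},s_{2})
&= (1-\hat{\theta})\bigl(\hat{c}_{1,u}(s_{1},s_{2}) - c_{1,u}(s_{1},s_{2})\bigr) \\
&\quad + \hat{\theta}\bigl(\hat{c}_{2,u}(s_{1},s_{2}) - c_{2,u}(s_{1},s_{2})\bigr) \\
&\quad + (\theta - \hat{\theta})\bigl(c_{1,u}(s_{1},s_{2}) - c_{2,u}(s_{1},s_{2})\bigr),
\end{align*}
so that by the triangle inequality in $L^{2}(D\times D)$,
\[
\|\hat{\phi}^{(u)} - \phi^{(u)}\|_{L^{2}(D\times D)}
\leq (1-\hat{\theta})\|\hat{c}_{1,u} - c_{1,u}\|_{L^{2}(D\times D)}
+ \hat{\theta}\|\hat{c}_{2,u} - c_{2,u}\|_{L^{2}(D\times D)}
+ |\theta - \hat{\theta}|\,\|c_{1,u} - c_{2,u}\|_{L^{2}(D\times D)}.
\]

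Next, I would handle the three terms in turn. Since $\{X_{t,T_{1}}\}$ and $\{Y_{t,T_{2}}\}$ separately satisfy the hypotheses of Theorem \ref{Thm3} (the condition ${T_{1}}/(T_{1}+T_{2}) \to \theta\in(0,1)$ ensures that $T_{1} \wedge T_{2} \to \infty$ and that the bandwidth conditions in Assumption \ref{Ass-b} hold for both $T_{1}$ and $T_{2}$), Theorem \ref{Thm3} applied to each sample yields
\[
\|\hat{c}_{1,u} - c_{1,u}\|_{L^{2}(D\times D)} \stackrel{p}{\to} 0,\qquad
\|\hat{c}_{2,u} - c_{2,u}\|_{L^{2}(D\times D)} \stackrel{p}{\to} 0.
\]
Combined with the trivial bounds $0 \leq 1-\hat{\theta}, \hat{\theta} \leq 1$, the first two terms in the decomposition tend to zero in probability. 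For the third term, I would note that $\hat{\theta} - \theta \to 0$ deterministically by hypothesis, and $\|c_{1,u} - c_{2,u}\|_{L^{2}(D\times D)} < \infty$ is a fixed finite constant (the $L^{2}$-integrability of each $c_{j,u}$ is guaranteed by the remark following Theorem \ref{Thm2}, and I would cite the appendix details used to prove that claim). Hence the third term converges to $0$ as well.

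Combining these via Slutsky's theorem gives $\|\hat{\phi}^{(u)} - \phi^{(u)}\|_{L^{2}(D\times D)} \stackrel{p}{\to} 0$, which squared is the desired conclusion. There is no real obstacle here: the structural work is already carried by Theorem \ref{Thm3}, and the only new ingredient is the weight decomposition. The one point to verify carefully is that the bandwidth assumptions in Assumptions \ref{Ass-KB2} and \ref{Ass-b} are imposed jointly for both samples (so that $T_{j}h \to \infty$, $(T_{j}h)/b \to \infty$, and $1/(T_{j}h^{3}) \to \infty$ hold for $j=1,2$), which is automatic under the stated assumption that $T_{1}/(T_{1}+T_{2}) \to \theta \in (0,1)$ since then $T_{1}$ and $T_{2}$ are of the same order.
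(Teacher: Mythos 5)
Your argument is correct and is exactly the routine verification the paper has in mind: the paper omits the proof of Proposition \ref{Prop2}, stating only that it follows immediately from Theorem \ref{Thm3}, and your decomposition into the two single-sample terms (handled by Theorem \ref{Thm3}) plus the deterministic $(\hat{\theta}-\theta)$ term (finite $L^{2}(D\times D)$ norm of $c_{1,u}-c_{2,u}$, deterministic convergence of $\hat{\theta}$) is precisely that verification. Nothing further is needed.
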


Since the distribution of $U_{\infty,\infty}^{(u)}$ depends on $\{\eta_{u,j}\}$, for the implementation of the two sample test (\ref{def: two-sample-test}), we approximate the distribution of $U_{\infty,\infty}^{(u)}$ using functional principal components. Consider the following truncated version of $U_{\infty,\infty}^{(u)}$:
\begin{align}
\bar{U}_{\infty,\infty}^{(u)} &= \sum_{j=1}^{q}\eta_{u,j}N_{j}^{2}. 
\end{align}
Let $\hat{\eta}_{u,j}$ be the $j$-th largest eigenvalue of $\hat{\Phi}^{(u)}$ defined by $\hat{\phi}^{(u)}$ and let $\hat{\nu}_{u,j}$ denote the eigenfunction associated with $\hat{\eta}_{u,j}$. Since $\hat{\Phi}^{(u)}$ and $\Phi^{(u)}$ are integral operators computed from covariance kernel functions $\hat{\phi}^{(u)}$ and $\phi^{(u)}$, Proposition \ref{Prop2} yields 
\begin{align}\label{two-sample-cov-op-conv}
\|\hat{\Phi}^{(u)} - \Phi^{(u)}\|_{\mathcal{S}} = \int_{[0,1]^d} \int_{[0,1]^d}\left(\hat{\phi}^{(u)}(s_{1},s_{2}) - \phi^{(u)}(s_{1},s_{2})\right)^{2}ds_{1}ds_{2} \stackrel{p}{\to} 0.
\end{align}
If $\eta_{u,1}>\cdots>\eta_{u,q}>\eta_{u,q+1}$, then (\ref{two-sample-cov-op-conv}) and the same argument in the proof of Corollary \ref{Cor1} yield
\begin{align}\label{two-sample-eigen-consistency}
\max_{1 \leq j \leq q}|\hat{\eta}_{u,j} - \eta_{u,j}| &\stackrel{p}{\to} 0,\ \max_{1 \leq j \leq q}\|\hat{\nu}'_{u,j} - \nu_{u,j}\| \stackrel{p}{\to} 0,
\end{align} 
where $\hat{\nu}'_{u,j}=\hat{\kappa}_{u,j}\hat{\nu}_{u,j}$ and $\hat{\kappa}_{u,j} = \text{sign}(\langle \hat{\nu}_{u,j}, \nu_{u,j} \rangle)$. 

Define two kinds of truncated version of $U_{T_{1},T_{2}}^{(u)}$ based on functional principal components as follows:
\begin{align*}
\bar{U}_{T_{1},T_{2}}^{(u)} &= {T_{1}T_{2}h \over T_{1}+T_{2}}\sum_{j=1}^{q}\langle \bar{X}_{T_{1}}{(u)} - \bar{Y}_{T_{2}}^{(u)}, \hat{\nu}_{u,j} \rangle^{2},\ \tilde{U}_{T_{1},T_{2}}^{(u)} = {T_{1}T_{2}h \over T_{1}+T_{2}}\sum_{j=1}^{q}\hat{\eta}_{u,j}^{-1}\langle \bar{X}_{T_{1}}^{(u)} - \bar{Y}_{T_{2}}^{(u)}, \hat{\nu}_{u,j} \rangle^{2}.
\end{align*}
The next result characterizes asymptotic properties of $\bar{U}_{T_{1},T_{2}}^{(u)}$ and $\tilde{U}_{T_{1},T_{2}}^{(u)}$. It can be shown by using (\ref{two-sample-eigen-consistency}), Proposition \ref{Prop1}, and the continuous mapping theorem.
\begin{proposition}\label{Prop3}
Suppose $T_1h \wedge {1 \over Th^3} \to \infty$, ${T_{1} \over T_{1} + T_{2}} \to \theta \in (0,1)$ as $T_{1} \wedge T_{2} \to \infty$ and $\eta_{u,1}>\cdots>\eta_{u,q}>\eta_{u,q+1}$. Assume that $\{X_{t,T_{1}}\}$ and $\{Y_{t,T_{2}}\}$ satisfy the same assumption of Theorem \ref{Thm3}. Then under $H_{0}$, we have
\begin{align*}
\bar{U}_{T_{1},T_{2}}^{(u)} \stackrel{d}{\to} \sum_{j=1}^{q}\eta_{u,j}N_{j}^{2},\ \tilde{U}_{T_{1},T_{2}}^{(u)} \stackrel{d}{\to} \chi^{2}(q),
\end{align*}
where $\chi^{2}(q)$ is the $\chi^{2}$-distribution with $q$ degrees of freedom. Additionally, under $H_{1}$,   
\begin{align*}
{T_{1}+T_{2} \over T_{1}T_{2}h}\bar{U}_{T_{1},T_{2}}^{(u)} &\stackrel{p}{\to} \sum_{j=1}^{q}\langle m_{1}(u,\cdot)- m_{2}(u,\cdot), \nu_{u,j} \rangle^{2},\\ 
{T_{1}+T_{2} \over T_{1}T_{2}h}\tilde{U}_{T_{1},T_{2}}^{(u)} &\stackrel{p}{\to} \sum_{j=1}^{q}\eta_{u,j}^{-1}\langle m_{1}(u,\cdot)- m_{2}(u,\cdot), \nu_{u,j} \rangle^{2}.
\end{align*}
\end{proposition}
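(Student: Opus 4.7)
The approach is the one already sketched in the statement: combine Proposition \ref{Prop1}, the eigen-system consistency (\ref{two-sample-eigen-consistency}), the continuous mapping theorem, and Slutsky's lemma. Set $D_{T_{1},T_{2}} := \sqrt{T_{1}T_{2}h/(T_{1}+T_{2})}\bigl(\bar{X}_{T_{1}}^{(u)} - \bar{Y}_{T_{2}}^{(u)}\bigr)$. Under $H_{0}$ the two centered samples are independent and Theorem \ref{Thm2} applies to each, so $D_{T_{1},T_{2}} \stackrel{d}{\to} Z^{(u)}$ in $L^{2}$; in particular $\|D_{T_{1},T_{2}}\|=O_{P}(1)$ by tightness of weakly convergent sequences.

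Next I would pass to the data-driven basis. The squaring in the definitions of $\bar{U}_{T_{1},T_{2}}^{(u)}$ and $\tilde{U}_{T_{1},T_{2}}^{(u)}$ makes $\hat{\nu}_{u,j}$ interchangeable with $\hat{\nu}'_{u,j}=\hat{\kappa}_{u,j}\hat{\nu}_{u,j}$, and the decomposition
\[
\langle D_{T_{1},T_{2}},\hat{\nu}'_{u,j}\rangle = \langle D_{T_{1},T_{2}},\nu_{u,j}\rangle + \langle D_{T_{1},T_{2}},\hat{\nu}'_{u,j}-\nu_{u,j}\rangle
\]
has a second piece bounded by $\|D_{T_{1},T_{2}}\|\cdot \|\hat{\nu}'_{u,j}-\nu_{u,j}\|=O_{P}(1)\cdot o_{P}(1)$, by (\ref{two-sample-eigen-consistency}). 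Applying the continuous functional $x \mapsto (\langle x,\nu_{u,j}\rangle)_{j=1}^{q}$ to $D_{T_{1},T_{2}} \stackrel{d}{\to} Z^{(u)}$ delivers joint convergence of the first piece to $(\langle Z^{(u)},\nu_{u,j}\rangle)_{j=1}^{q}$, which is a mean-zero Gaussian vector with diagonal covariance because $\{\nu_{u,j}\}$ diagonalize the covariance operator of $Z^{(u)}$. One more invocation of continuous mapping applied to $(x_{j})\mapsto \sum_{j=1}^{q}x_{j}^{2}$ then gives $\bar{U}_{T_{1},T_{2}}^{(u)} \stackrel{d}{\to} \sum_{j=1}^{q}\eta_{u,j}N_{j}^{2}$ with $\{N_{j}\}$ i.i.d.\ standard normal. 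The $\chi^{2}(q)$ limit for $\tilde{U}_{T_{1},T_{2}}^{(u)}$ follows from an additional Slutsky step that uses $\hat{\eta}_{u,j}^{-1}\stackrel{p}{\to}\eta_{u,j}^{-1}$ to convert each $\eta_{u,j}N_{j}^{2}$ summand into $N_{j}^{2}$.

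For the behavior under $H_{1}$ I would invoke the local mean rate (\ref{conv-rate-mean}) applied to each sample to obtain $\bar{X}_{T_{1}}^{(u)} - \bar{Y}_{T_{2}}^{(u)} \stackrel{p}{\to} m_{1}(u,\cdot)-m_{2}(u,\cdot)$ in $L^{2}$. Combined with the eigenfunction consistency, continuous mapping yields $\langle \bar{X}_{T_{1}}^{(u)}-\bar{Y}_{T_{2}}^{(u)},\hat{\nu}_{u,j}\rangle^{2} \stackrel{p}{\to} \langle m_{1}(u,\cdot)-m_{2}(u,\cdot),\nu_{u,j}\rangle^{2}$; summing over $j$ gives the stated limit for $\bar{U}_{T_{1},T_{2}}^{(u)}$, and multiplying by $\hat{\eta}_{u,j}^{-1}$ before summing gives the limit for $\tilde{U}_{T_{1},T_{2}}^{(u)}$. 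The only real subtlety is the joint handling of the data-driven eigenfunctions inside the weakly converging sequence in $L^{2}$; once the $O_{P}(1)$ tightness of $D_{T_{1},T_{2}}$ and the $q$-dimensional joint convergence have been argued, the remainder is routine Slutsky and continuous mapping.
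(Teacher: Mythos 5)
Your argument is correct and is essentially the paper's own route: the paper omits a detailed proof, stating only that Proposition \ref{Prop3} follows from the eigen-consistency (\ref{two-sample-eigen-consistency}), Proposition \ref{Prop1}, and the continuous mapping theorem, and your proposal is exactly that argument with the details filled in (tightness of the scaled difference, Slutsky to replace $\hat{\nu}_{u,j}$, $\hat{\eta}_{u,j}$ by $\nu_{u,j}$, $\eta_{u,j}$, and continuous mapping for the finite-dimensional projections, plus the consistency of the locally weighted means under $H_{1}$).
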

Proposition \ref{Prop3} implies that $\bar{U}_{T_{1},T_{2}}^{(u)}, \tilde{U}_{T_{1},T_{2}}^{(u)} \stackrel{p}{\to} \infty$ under $H_{1}$ if at least one of the projections $\langle m_{1}(u,\cdot)- m_{2}(u,\cdot), \nu_{u,j} \rangle$ is different from zero. 

When $\eta_{u,q_{0}}>\eta_{u,q_{0}+1}=0$ for some positive integer $q_{0}$, we can estimate $q_{0}$ based on a ratio method introduced in \cite{LaYa12}. The estimator $\hat{q}_{0}$ is given as follows: 
\begin{align*}
\hat{q}_{0} &= \argmin_{1 \leq j \leq \bar{q}}{\hat{\eta}_{u,j+1} \over \hat{\eta}_{u,j}},
\end{align*} 
where $\bar{q}$ is a prespecified positive number and set $0/0=1$ for convenience. Let $\varepsilon_{0}$ be a prespecified small positive number. We set $\left|\hat{\eta}_{u,j}/\hat{\eta}_{u,1}\right|$ as $0$ to reduce estimation error in the implementation, if its absolute value is smaller than $\varepsilon_{0}$. The following result establishes the consistency of $\hat{q}$. 

\begin{corollary}\label{Cor2}
Let $\bar{q} \geq q_{0}$. Under the same assumption of Proposition \ref{Prop2}, we have $\hat{q}_{0} \stackrel{p}{\to} q_{0}$ as $T \to \infty$. 
\end{corollary}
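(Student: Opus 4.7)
The plan is to leverage the consistency of the covariance operator $\hat{\Phi}^{(u)}$ established in Proposition \ref{Prop2} to transfer consistency to the ordered eigenvalues $\hat{\eta}_{u,j}$, and then exploit the fact that the population ratio $r_j := \eta_{u,j+1}/\eta_{u,j}$ (with the convention $0/0 = 1$) has a unique minimizer at $j = q_0$ on $\{1,\ldots,\bar{q}\}$.

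First I would invoke Proposition \ref{Prop2} together with $\|\cdot\|_{\mathcal{L}} \leq \|\cdot\|_{\mathcal{S}}$ to obtain $\|\hat{\Phi}^{(u)} - \Phi^{(u)}\|_{\mathcal{L}} \stackrel{p}{\to} 0$. Because $\Phi^{(u)}$ and $\hat{\Phi}^{(u)}$ are self-adjoint compact positive semidefinite operators, Weyl's inequality for eigenvalues ordered in decreasing order yields
\[
\max_{1 \leq j \leq \bar{q}+1} |\hat{\eta}_{u,j} - \eta_{u,j}| \;\leq\; \|\hat{\Phi}^{(u)} - \Phi^{(u)}\|_{\mathcal{L}} \;\stackrel{p}{\to}\; 0.
\]
Crucially, this consistency does not require a spectral gap between $\eta_{u,q_0+1} = 0$ and the subsequent zero eigenvalues, only the gap structure among the nonzero ones.

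Next I would identify the population argmin. Since $\eta_{u,1} > \cdots > \eta_{u,q_0} > 0$ and $\eta_{u,j} = 0$ for all $j \geq q_0 + 1$, one has $r_j \in (0,1)$ for $1 \leq j < q_0$, $r_{q_0} = 0$, and $r_j = 0/0 = 1$ for $q_0 < j \leq \bar{q}$. Hence $q_0$ is the unique minimizer and the value gap
\[
\delta_0 \;:=\; \min\Bigl\{r_1,\ldots,r_{q_0-1},\; 1\Bigr\} \;>\; 0
\]
separates $r_{q_0} = 0$ from the remaining ratios.

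To transfer this to $\hat{q}_0$, take $\varepsilon_0$ smaller than $\eta_{u,q_0}/\eta_{u,1}$ (permissible since $\varepsilon_0$ is a prespecified free parameter). On the high-probability event $\{\max_j |\hat{\eta}_{u,j} - \eta_{u,j}| < \eta_{u,q_0}/4\}$, and for $T$ large enough, continuity of the ratio gives $\hat{\eta}_{u,j}/\hat{\eta}_{u,1} > \varepsilon_0$ for every $j \leq q_0$, while $\hat{\eta}_{u,j}/\hat{\eta}_{u,1} \to 0 < \varepsilon_0$ for every $q_0 < j \leq \bar{q}+1$. Hence the thresholding rule zeroes out precisely the indices $j > q_0$, which makes $\hat{r}_{q_0} = 0$, $\hat{r}_j \stackrel{p}{\to} r_j \in (0,1)$ for $j < q_0$, and $\hat{r}_j = 0/0 = 1$ for $j > q_0$. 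On this event the empirical argmin therefore coincides with $q_0$, giving $\Pr(\hat{q}_0 = q_0) \to 1$ and the integer-valued convergence $\hat{q}_0 \stackrel{p}{\to} q_0$.

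The only subtle step is the bookkeeping for the thresholding convention: one must verify that the nonzero eigenvalues stay above $\varepsilon_0 \hat{\eta}_{u,1}$ and the zero ones fall below it with probability tending to one. Once the thresholding behaves correctly, the conclusion is essentially the argmin continuous mapping principle applied to a function with a strict unique minimizer.
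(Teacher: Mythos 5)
Your proposal is correct and follows essentially the same route as the paper: operator-norm consistency of $\hat{\Phi}^{(u)}$ transferred to the eigenvalues via the perturbation bound (your Weyl inequality is exactly the paper's Lemma on singular values of compact operators), continuous mapping for the ratios with $j<q_{0}$, the ratio at $j=q_{0}$ vanishing because $\hat{\eta}_{u,q_{0}+1}\stackrel{p}{\to}0$ while $\hat{\eta}_{u,q_{0}}\stackrel{p}{\to}\eta_{u,q_{0}}>0$, and the $0/0=1$ thresholding convention disposing of $j>q_{0}$. The only difference is that you state explicitly the requirement $\varepsilon_{0}<\eta_{u,q_{0}}/\eta_{u,1}$ for the thresholding to leave the first $q_{0}$ eigenvalues untouched, a condition the paper leaves implicit (and addresses only in the remark following the corollary).
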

Applying the same rule as described above, it is straightforward to extend Corollary \ref{Cor2} to the case that $\eta_{u,q_{0}}$ and $\eta_{u,q_{0}+1}$ satisfy $\eta_{u,q_{0}}/\eta_{u,1} \geq \epsilon_{0}$ and $\eta_{u,q_{0}+1}/\eta_{u,1} < \epsilon_{0}$ for some prespecified small positive number $\epsilon_{0}$.

\section{Concluding remarks}

In this paper, we have studied a kernel method for estimating the time-varying covariance operator and the time-varying mean function of a locally stationary functional time series. We derived the convergence rate of the kernel estimator of the covariance operator and established a central limit theorem for the kernel-based locally weighted sample mean. As applications of these results, we extended methods for 
testing the equality of time-varying mean functions, which were developed for stationary functional time series to our framework.  

To conclude, we shall mention two important future research topics. The first is to consider a data driven formula for selecting bandwidth parameters $h$ and $b$. This topic (at least on $b$) has not been addressed in previous works (e.g., \cite{HoKoRe13} and \cite{LiRoSh20}). Second, the two sample problem considered in this paper is limited to the hypothesis testing on the equality of the time-varying mean functions. We are hopeful that our theory can be extended to cover relevant hypotheses considered in the recent works by \cite{DeKoAu20} and \cite{DeKoVo20}.

\newpage

\appendix

\section{Proofs}\label{Appendix: proof}

\subsection{Proofs for Section \ref{Sec: Main}}

\begin{proof}[Proof of Proposition \ref{Prop: example}]
First, we check $X_{t}^{(u)}$ is a $L^4$-$m$-approximable process in $L_H^4$. Observe that
\begin{align*}
\|X_{t}^{(u)}-X_{m,t}^{(u)}\|^2 &= \sum_{k \geq 1}\left|H_k(u;\mathcal{F}_{t}^{(k)}) - H_k(u;\mathcal{F}_{t}^{(k,m)})\right|^2.
\end{align*} 
Then from Condition (E4), we obtain
\begin{align}
&\sum_{m \geq 1}v_4(X_{t}^{(u)}-X_{m,t}^{(u)})=\sum_{m \geq 1}E\left[\|X_{t}^{(u)}-X_{m,t}^{(u)}\|^4\right]^{1/4} \nonumber \\ 
&= \sum_{m \geq 1}E\left[\left(\sum_{k \geq 1}\left|H_k(u;\mathcal{F}_{t}^{(k)}) - H_k(u;\mathcal{F}_{t}^{(k,m)})\right|^2\right)^2\right]^{1/4} \nonumber \\
&\leq \sum_{m \geq 1}\left(\sum_{k_1 \geq 1}\sum_{k_2 \geq 1}E\left[\left|H_{k_1}(u;\mathcal{F}_{t}^{(k_1)}) - H_{k_1}(u;\mathcal{F}_{t}^{(k_1,m)})\right|^4\right]^{1/2} \right. \nonumber\\
&\left. \quad \quad \quad \times E\left[\left|H_{k_2}(u;\mathcal{F}_{t}^{(k_2)}) - H_{k_2}(u;\mathcal{F}_{t}^{(k_2,m)})\right|^4\right]^{1/2}\right)^{1/4} \nonumber \\
&=\sum_{m \geq 1}\left(\sum_{k \geq 1}E\left[\left(H_k(u;\mathcal{F}_{t,k}) - H_{k}(u;\mathcal{F}_{t,k}^{(m)})\right)^4\right]^{1/2}\right)^{1/2}<\infty. \label{example-L4-m-approx}
\end{align}
From Condition (E3), we also have
\begin{align}
E\left[\left\|X_{t}^{(u)}\right\|^4\right] &= E\left[\left(\sum_{k \geq 1}\left|H_k(u;\mathcal{F}_{t}^{(k)})\right|^2\right)^2\right] \nonumber \\
&= \sum_{k_1 \geq 1}\sum_{k_2 \geq 1}E\left[\left|H_{k_1}(u;\mathcal{F}_{t}^{(k_1)})\right|^2\left|H_{k_2}(u;\mathcal{F}_{t}^{(k_2)})\right|^2\right] \nonumber\\
&\leq \sum_{k_1 \geq 1}\sum_{k_2 \geq 1}E\left[\left|H_{k_1}(u;\mathcal{F}_{t}^{(k_1)})\right|^4\right]^{1/2}E\left[\left|H_{k_2}(u;\mathcal{F}_{t}^{(k_2)})\right|^4\right]^{1/2} \nonumber \\
&= \left(\sum_{k \geq 1}E\left[\left|H_{k}(u;\mathcal{F}_{t}^{(k)})\right|^4\right]^{1/2}\right)^{2}<\infty. \label{example-L4H}
\end{align}
Combining (\ref{example-L4-m-approx}) and (\ref{example-L4H}), $X_{t}^{(u)}$ is a $L^4$-$m$-approximable process in $L_H^4$. 

Next we check that $X_{t,T}$ is a locally stationary functional time series in $L^4_H$ that satisfies Definition \ref{def: LSfTS} with $\rho = 2$. This can be verified from Condition (E3) and 
\begin{align*}
\|X_{t,T}-X_{t}^{(u)}\|^2 &= \sum_{k \geq 1}\left|H_k(t/T;\mathcal{F}_{t}^{(k)}) - H_k(u;\mathcal{F}_{t}^{(k)})\right|^2 \leq \left|{t \over T} - u\right|^2 \sum_{k \geq 1}\left|\bar{H}_k(\mathcal{F}_{t}^{(k)})\right|^2.
\end{align*}
\end{proof}

\begin{proof}[Proof of Theorem \ref{Thm1}]
Define 
\begin{align*}
\tilde{C}_{u} &= {1 \over Th}\sum_{t=1}^{T}K_{1,h}\left(u - {t \over T}\right)X_{t}^{(u)} \otimes X_{t}^{(u)}.
\end{align*}

Decompose
\begin{align*}
\hat{C}_{u} - \tilde{C}_{u} &= {1 \over Th}\sum_{t=1}^{T}K_{1,h}\left(u - {t \over T}\right)\left( X_{t}^{(u)} \otimes (X_{t,T}-X_{t}^{(u)}) \right. \\
&\left. \quad + (X_{t,T}-X_{t}^{(u)}) \otimes X_{t}^{(u)} + (X_{t,T}-X_{t}^{(u)}) \otimes (X_{t,T}-X_{t}^{(u)}) \right)\\
&=: C_{u,1} + C_{u,2} + C_{u,3}. 
\end{align*}
For $C_{1,u}$, 
\begin{align*}
\|C_{1,u}(x)\| &\leq {1 \over Th}\sum_{t=1}^{T}K_{1,h}\left(u - {t \over T}\right)\|X_{t}^{(u)}\| \|x\| \|X_{t,T}-X_{t}^{(u)}\| \\
&\leq \left(h + {1 \over T}\right){\|x\| \over Th}\sum_{t=1}^{T}K_{1,h}\left(u - {t \over T}\right)\|X_{t}^{(u)}\| U_{t,T}^{(u)}. 
\end{align*}
Then
\begin{align*}
E[\|C_{1,u}\|_{\mathcal{L}}] &\leq \left(h + {1 \over T}\right){1 \over Th}\sum_{t=1}^{T}K_{1,h}\left(u - {t \over T}\right)E[\|X_{t}^{(u)}\|U_{t,T}^{(u)}]\\
&\leq \left(h + {1 \over T}\right){1 \over Th}\sum_{t=1}^{T}K_{1,h}\left(u - {t \over T}\right)(E[\|X_{t}^{(u)}\|^{2}])^{1/2}E[(U_{t,T}^{(u)})^{2}]^{1/2}\\ 
&= O\left(h + {1 \over T}\right). 
\end{align*}
This implies $\|C_{1,u}\|_{\mathcal{L}} = O_{P}(h + T^{-1})$. Likewise, we have 
\[
\|C_{2,u}\|_{\mathcal{L}} = O_{P}\left(h + {1 \over T}\right),\ \|C_{3,u}\|_{\mathcal{L}} = O_{P}\left(h^{2} + {1 \over T^{2}}\right).
\]
Therefore, 
\begin{align}\label{approx-CO1}
\|\hat{C}_{u} - \tilde{C}_{u}\|_{\mathcal{L}} = O_{P}\left(h + {1 \over T}\right).
\end{align}
Note that 
\begin{align*}
\tilde{C}_{u} - E[\tilde{C}_{u}] &= {1 \over Th}\sum_{t=1}^{T}K_{1,h}\left(u - {t \over T}\right)\left(X_{t}^{(u)} \otimes X_{t}^{(u)} - E[X_{t}^{(u)} \otimes X_{t}^{(u)}]\right).
\end{align*}
Define $B_{u,t}(y)=\langle X_{t}^{(u)}, y \rangle X_{t}^{(u)} - C_{u}(y)$ and 
$\tilde{B}_{u,t} = h^{-1}K_{1,h}(u-t/T)B_{u,t}$. Then 
\begin{align*}
Th\|\tilde{C}_{u} - E[\tilde{C}_{u}]\|_{\mathcal{S}}^{2} &= Th\left\|{1 \over T}\sum_{t=1}^{T}\tilde{B}_{u,k}\right\|_{\mathcal{S}}^{2}= {h \over T}\left(\sum_{t_{1}=1}^{T}\langle \tilde{B}_{u,t_{1}},\tilde{B}_{u,t_{2}} \rangle_{\mathcal{S}} + \sum_{t_{1} \neq t_{2}} \langle \tilde{B}_{u,t_{1}},\tilde{B}_{u,t_{2}} \rangle_{\mathcal{S}}\right).
\end{align*}
Note that $E[\langle B_{u,t_{1}},B_{u,t_{2}} \rangle_{\mathcal{S}}]=E[\langle B_{u,0},B_{u,t_{2}-t_{1}} \rangle_{\mathcal{S}}]$. Then we have
\begin{align*}
&{h \over T}E\left[\sum_{t_{1}=1}^{T}\langle \tilde{B}_{u,t_{1}},\tilde{B}_{u,t_{2}} \rangle_{\mathcal{S}} + \sum_{t_{1} \neq t_{2}} \langle \tilde{B}_{u,t_{1}},\tilde{B}_{u,t_{2}} \rangle_{\mathcal{S}}\right] \\
&\quad = {1 \over Th}\sum_{k=-(T-1)}^{T-1}\sum_{t_{1}=1+|k|}^{T}K_{1,h}\left(u-{t_{1} \over T}\right)K_{1,h}\left(u-{t_{1}-|k| \over T}\right)E\left[\langle B_{u,t_{1}}, B_{u,t_{1}-|k|} \rangle_{\mathcal{S}}\right]\\
&\quad = {1 \over Th}\sum_{k=-(T-1)}^{T-1}E\left[\langle B_{u,0}, B_{u,|k|} \rangle_{\mathcal{S}}\right]\sum_{t_{1}=1+|k|}^{T}K_{1,h}\left(u-{t_{1} \over T}\right)K_{1,h}\left(u-{t_{1}-|k| \over T}\right)\\
&\quad \leq {1 \over Th}\sum_{k=-(T-1)}^{T-1}\left|E\left[\langle B_{u,0}, B_{u,|k|} \rangle_{\mathcal{S}}\right]\right|\sum_{t_{1}=1+|k|}^{T}K_{1,h}\left(u-{t_{1} \over T}\right)K_{1,h}\left(u-{t_{1}-|k| \over T}\right)\\
&\quad \leq {2 \over Th}\sum_{k=0}^{T-1}\left|E\left[\langle B_{u,0}, B_{u,k} \rangle_{\mathcal{S}}\right]\right|\sum_{t_{1}=1+k}^{T}K_{1,h}\left(u-{t_{1} \over T}\right)K_{1,h}\left(u-{t_{1}-k \over T}\right).
\end{align*}
Hence we have
\begin{align*}
&ThE\left[\|\tilde{C}_{u} - E[\tilde{C}_{u}]\|_{\mathcal{S}}^{2}\right]\\
&\quad \leq {2 \over Th}\sum_{k=0}^{T-1}\left|E\left[\langle B_{u,0}, B_{u,k} \rangle_{\mathcal{S}}\right]\right|\sum_{t_{1}=1+k}^{T}K_{1,h}\left(u-{t_{1} \over T}\right)K_{1,h}\left(u-{t_{1}-k \over T}\right)\\
&\quad \lesssim {2 \over Th}\sum_{k=0}^{T-1}\left|E\left[\langle B_{u,0}, B_{u,k} \rangle_{\mathcal{S}}\right]\right|\sum_{t_{1}=1+k}^{T}K_{1,h}\left(u-{t_{1} \over T}\right)\\
&\quad \leq \left({2 \over Th}\sum_{t_{1}=1}^{T}K_{1,h}\left(u-{t_{1} \over T}\right)\right)\sum_{k=0}^{T-1}\left|E\left[\langle B_{u,0}, B_{u,k} \rangle_{\mathcal{S}}\right]\right|.
\end{align*}
Observe that
\begin{align*}
\langle B_{u,0}, B_{u,k} \rangle_{\mathcal{S}} &= \sum_{j =1}^{\infty}\langle B_{u,0}(v_{u,j}), B_{u,k}(v_{u,j}) \rangle \\
&= \langle X_{0}^{(u)}, X_{k}^{(u)} \rangle\sum_{j=1}^{\infty}\langle X_{0}^{(u)}, v_{u,j} \rangle\langle X_{k}^{(u)}, v_{u,j} \rangle - \sum_{j=1}^{\infty}\lambda_{u,j}\langle X_{0}^{(u)}, v_{u,j} \rangle^{2}\\ 
&\quad - \sum_{j=1}^{\infty}\lambda_{u,j}\langle X_{k}^{(u)}, v_{u,j} \rangle^{2} + \sum_{j = 1}^{\infty}\lambda_{u,j}^{2}\\
&= \langle X_{0}^{(u)}, X_{k}^{(u)} \rangle^{2} - \sum_{j=1}^{\infty}\lambda_{u,j}\langle X_{0}^{(u)}, v_{u,j} \rangle^{2} - \sum_{j=1}^{\infty}\lambda_{u,j}\langle X_{k}^{(u)}, v_{u,j} \rangle^{2} + \sum_{j = 1}^{\infty}\lambda_{u,j}^{2}.
\end{align*}
Then we have 
\begin{align}\label{B0-HS-norm}
E\left[\langle B_{u,0}, B_{u,k} \rangle_{\mathcal{S}}\right] &= E\left[ \langle X_{0}^{(u)}, X_{k}^{(u)} \rangle^{2}\right] - \sum_{j = 1}^{\infty}\lambda_{u,j}^{2},\ k \geq 1.
\end{align}
Recall that $X_{0}^{(u)}$ and $X_{k,k}^{(u)}$ are independent and identically distributed. Thus we have
\begin{align*}
E\left[\langle X_{0}^{(u)}, X_{k,k}^{(u)} \rangle^{2}\right] &= \sum_{j=1}^{\infty}\sum_{\ell=1}^{\infty}\left(E\left[\langle X_{0}^{(u)}, v_{u,j}\rangle\langle X_{0}^{(u)}, v_{u,\ell}\rangle\right]\right)^{2}\\
&=\sum_{j=1}^{\infty}\sum_{\ell=1}^{\infty}\left(\left\langle E\left[\langle X_{0}^{(u)}, v_{u,j}\rangle X_{0}^{(u)}\right], v_{u,\ell}\right\rangle\right)^{2}\\
&=\sum_{j=1}^{\infty}\sum_{\ell=1}^{\infty}\left(\langle C_{u}(v_{u,j}), v_{u,\ell}\rangle\right)^{2} = \sum_{j=1}^{\infty}\lambda_{u,j}^{2}.
\end{align*}
Moreover, since
\begin{align*}
\langle X_{0}^{(u)}, X_{k}^{(u)} - X_{k,k}^{(u)} \rangle^{2} & = \langle X_{0}^{(u)}, X_{k}^{(u)} \rangle^{2} - \langle X_{0}^{(u)}, X_{k,k}^{(u)} \rangle^{2} - 2\langle X_{0}^{(u)}, X_{k}^{(u)}-X_{k,k}^{(u)} \rangle\langle X_{0}^{(u)}, X_{k,k}^{(u)} \rangle,
\end{align*}
we have 
\begin{align}
E\left[ \langle X_{0}^{(u)}, X_{k}^{(u)} \rangle^{2}\right]- \sum_{j = 1}^{\infty}\lambda_{u,j}^{2} & = E\left[ \langle X_{0}^{(u)}, X_{k,k}^{(u)} \rangle^{2}\right]- \sum_{j = 1}^{\infty}\lambda_{u,j}^{2} + E\left[\langle X_{0}^{(u)}, X_{k}^{(u)} - X_{k,k}^{(u)} \rangle^{2}\right] \nonumber \\
&\quad + 2E\left[\langle X_{0}^{(u)}, X_{k}^{(u)}-X_{k,k}^{(u)} \rangle\langle X_{0}^{(u)}, X_{k,k}^{(u)} \rangle\right] \nonumber \\
& = E\left[\langle X_{0}^{(u)}, X_{k}^{(u)} - X_{k,k}^{(u)} \rangle^{2}\right] + 2E\left[\langle X_{0}^{(u)}, X_{k}^{(u)}-X_{k,k}^{(u)} \rangle\langle X_{0}^{(u)}, X_{k,k}^{(u)} \rangle\right]. \label{B0-HS-norm2}
\end{align}
Together with (\ref{B0-HS-norm}) and (\ref{B0-HS-norm2}), we have 
\begin{align*}
\left|E\left[\langle B_{u,0}, B_{u,k} \rangle_{\mathcal{S}}\right]\right| &\leq v_{4}^{2}(X_{0}^{(u)})v_{4}^{2}(X_{k}^{(u)}-X_{k,k}^{(u)}) + 2v_{4}^{3}(X_{0}^{(u)})v_{4}(X_{k}^{(u)}-X_{k,k}^{(u)})\\
&=v_{4}^{2}(X_{0}^{(u)})v_{4}^{2}(X_{1}^{(u)}-X_{k,1}^{(u)}) + 2v_{4}^{3}(X_{0}^{(u)})v_{4}(X_{1}^{(u)}-X_{k,1}^{(u)}).
\end{align*}
Since $X_{t}^{(u)}$ is $L^{4}$-$m$-approximable, for $u \in [C_{1}h, 1-C_{1}h]$, we have
\begin{align*}
&ThE\left[\|\tilde{C}_{u} - E[\tilde{C}_{u}]\|_{\mathcal{S}}^{2}\right]\\
&\quad \lesssim \left({2 \over Th}\sum_{t_{1}=1}^{T}K_{1,h}\left(u-{t_{1} \over T}\right)\right)\sum_{k=0}^{T-1}\left|E\left[\langle B_{u,0}, B_{u,k} \rangle_{\mathcal{S}}\right]\right|\\
&\quad \lesssim O(1) \times \left\{ v_{4}^{2}(X_{0}^{(u)})\sum_{k=1}^{\infty}v_{4}^{2}(X_{1}^{(u)}-X_{k,1}^{(u)}) + 2v_{4}^{3}(X_{0}^{(u)})\sum_{k=1}^{\infty}v_{4}(X_{1}^{(u)}-X_{k,1}^{(u)})\right\} <\infty.
\end{align*}
Therefore, we obtain
\begin{align}\label{approx-CO2}
\|\tilde{C}_{u} - E[\tilde{C}_{u}]\|_{\mathcal{S}} &= O_{P}\left(\sqrt{{1 \over Th}}\right).
\end{align}
Further, for each $u \in [C_{1}h, 1-C_{1}h]$, we have
\begin{align}\label{approx-CO3}
\|E[\tilde{C}_{u}] - C_{u}\|_{\mathcal{S}} &= \left|{1 \over Th}\sum_{t=1}^{T}K_{1,h}\left(u - {t \over T}\right)-1\right|\|C_{u}\|_{\mathcal{S}} \nonumber \\
&=\left|{1 \over Th}\sum_{t=1}^{T}K_{1,h}\left(u - {t \over T}\right)-1\right|\|C_{u}\|_{\mathcal{S}} \nonumber \\
&= O\left({1 \over Th^{2}}\right) + o(h).
\end{align}
For the third equality, we used Lemma \ref{lem: g}.

Combining (\ref{approx-CO1}), (\ref{approx-CO2}), (\ref{approx-CO3}) and $\|\cdot \|_{\mathcal{L}}\leq \|\cdot\|_{\mathcal{S}}$, we have
\begin{align*}
\|\hat{C}_{u} - C_{u}\|_{\mathcal{L}} &\leq \|\hat{C}_{u} - \tilde{C}_{u}\|_{\mathcal{L}} + \|\tilde{C}_{u} - E[\tilde{C}_{u}]\|_{\mathcal{S}} + \|E[\tilde{C}_{u}] - C_{u}\|_{\mathcal{S}}\\
&= O_{P}\left(h + {1 \over T}\right) + O_{P}\left(\sqrt{{1 \over Th}}\right) + O\left({1 \over Th^{2}}\right) + o(h)\\
&= O_{P}\left(h + \sqrt{{1 \over Th}} + {1 \over Th^{2}}\right)
\end{align*}
for each $u \in [C_{1}h, 1-C_{1}h]$. 
\end{proof}

\begin{proof}[Proof of Corollary \ref{Cor1}]

Combining Theorem \ref{Thm1} and Lemmas \ref{lem: HK-lem22} and \ref{lem: HK-lem23}, we have 
\begin{align*}
|\hat{\lambda}_{u,j} - \lambda_{u,j}| &\leq \|\hat{C}_{u} - C_{u}\|_{\mathcal{L}}= O_{P}\left(h + \sqrt{{1 \over Th}} + {1 \over Th^{2}}\right),\\
\|\hat{c}_{u,j}\hat{v}_{u,j} - v_{u,j}\| &\leq {2\sqrt{2} \over \alpha_{u,j}}\|\hat{C}_{u} - C_{u}\|_{\mathcal{L}}= O_{P}\left(h + \sqrt{{1 \over Th}} + {1 \over Th^{2}}\right),
\end{align*}
where $\alpha_{u,1} = \lambda_{u,1}-\lambda_{u,2}$, $\alpha_{u,j}=\min\{\lambda_{u,j-1}-\lambda_{u,j}, \lambda_{u,j} -\lambda_{u,j+1}\}$, $2 \leq j \leq q$. 
\end{proof}

\begin{proof}[Proof of Theorem \ref{Thm2}]

Define
\[
\tilde{X}_{T}^{(u)} = {1 \over Th}\sum_{t=1}^{T}K_{1,h}\left(u-{t \over T}\right)X_{t}^{(u)}.
\]
(Step1) In this step, we will show 
\begin{align}\label{AN}
ThE[\|\bar{X}_{T}^{(u)}-\tilde{X}_{T}^{(u)}\|^{2}] = o(1).
\end{align}
Observe that
\begin{align*}
\|\bar{X}_{T}^{(u)}-\tilde{X}_{T}^{(u)}\| &\leq {1 \over Th}\sum_{t=1}^{T}K_{1,h}\left(u-{t \over T}\right)\|X_{t,T}-X_{t}^{(u)}\| \\
&\leq  \left(h + {1 \over T}\right){1 \over Th}\sum_{t=1}^{T}K_{1,h}\left(u-{t \over T}\right)U_{t,T}^{(u)}.
\end{align*}
Then we have
\begin{align*}
ThE[\|\bar{X}_{T}^{(u)}-\tilde{X}_{T}^{(u)}\|^{2}] &\leq Th\left(h + {1 \over T}\right)^{2}{1 \over T^{2}h^{2}}E\left[\sum_{t_{1}=1}^{T}\sum_{t_{2}=1}^{T}K_{1,h}\left(u-{t_{1} \over T}\right)K_{1,h}\left(u-{t_{2} \over T}\right)U_{t_{1},T}^{(u)}U_{t_{2},T}^{(u)}\right]\\
&\lesssim Th^{3}{1 \over T^{2}h^{2}}\sum_{t_{1}=1}^{T}\sum_{t_{2}=1}^{T}K_{1,h}\left(u-{t_{1} \over T}\right)K_{1,h}\left(u-{t_{2} \over T}\right)E[(U_{t,T}^{(u)})^{2}]\\
&\lesssim Th^{3}\left({1 \over Th}\sum_{t=1}^{T}K_{1,h}\left(u-{t \over T}\right)\right)^{2} = o(1).
\end{align*}
This implies that
\begin{align*}
\sqrt{Th}\bar{X}_{T}^{(u)} &= \sqrt{Th}\tilde{X}_{T}^{(u)} + o_{P}(1).
\end{align*}
(Step2) In this step, we give a sketch of the lest of the proof. 

In Step3, we will show 
\begin{align}\label{conv1}
\limsup_{m \to \infty}\limsup_{T \to \infty}E\left[\int \left({1\over \sqrt{Th}}\sum_{t=1}^{T}K_{1,h}\left(u-{t \over T}\right)\left(X_{t}^{(u)}(s)-X_{m,t}^{(u)}(s)\right)\right)^{2}ds\right]=0
\end{align}
where the variables $X_{m,t}^{(u)}$ are defined in (\ref{m-approx-ver}).

In Step4, we will show that for any $m \geq 1$, 
\begin{align}\label{conv2}
{1 \over \sqrt{Th}}\sum_{t=1}^{T}K_{1,h}\left(u-{t \over T}\right)X_{m,t}^{(u)} \stackrel{d}{\to} G^{(u)}_{m}\ \text{in $L^{2}$} 
\end{align}
where $G^{(u)}_{m}$ is a Gaussian process with $E[G^{(u)}_{m}(s)]=0$ and with the covariance kernel function $E[G^{(u)}_{m}(s_{1})G^{(u)}_{m}(s_{2})]=c_{m}^{(u)}(s_{1},s_{2})\int K_{1}^{2}(z)dz$ where
\begin{align*}
c_{m}(s_{1},s_{2}) &= E[X_{0}^{(u)}(s_{1})X_{0}^{(u)}(s_{2})] + \sum_{t = 1}^{m}E[X_{0}^{(u)}(s_{1})X_{t}^{(u)}(s_{2})] + \sum_{t =1}^{m}E[X_{0}^{(u)}(s_{2})X_{t}^{(u)}(s_{1})]. 
\end{align*}

In Step5, we will show
\begin{align}\label{conv3}
G^{(u)}_{m} &\stackrel{d}{\to} G^{(u)}\ \text{in $L^{2}$}.
\end{align}

Combining Theorem 3.2 in \cite{Bi99} with (\ref{conv1})-(\ref{conv3}), we have
\begin{align}\label{asy-normal}
\sqrt{Th}\tilde{X}_{T}^{(u)} \stackrel{d}{\to} G^{(u)}\ \text{in $L^{2}$}. 
\end{align}

Therefore, (\ref{AN}) and (\ref{asy-normal}) yield $\sqrt{Th}\bar{X}_{T}^{(u)} \stackrel{d}{\to} G^{(u)}$ in $L^{2}$. 

(Step3) In this step, we will show (\ref{conv1}). By stationarity, 
\begin{align*}
&E\left[\left(\sum_{t=1}^{T}K_{1,h}\left(u-{t \over T}\right)\left(X_{t}^{(u)}(s)-X_{m,t}^{(u)}(s)\right)\right)^{2}\right]\\ 
&\quad =  \sum_{t=1}^{T}K_{1,h}^{2}\left(u-{t \over T}\right)E\left[(X_{1}^{(u)}(s)-X_{m,1}^{(u)}(s))^{2}\right]\\
&\quad \quad + 2\sum_{1 \leq t_{1}<t_{2}\leq T}K_{1,h}\left(u-{t_{1} \over T}\right)K_{1,h}\left(u-{t_{2} \over T}\right) E\left[(X_{t_{1}}^{(u)}(s)-X_{m,t_{1}}^{(u)}(s))(X_{t_{2}}^{(u)}(s)-X_{m,t_{2}}^{(u)}(s))\right].
\end{align*}
Note that
\begin{align}
&{1 \over Th}\int \sum_{t=1}^{T}K_{1,h}^{2}\left(u-{t \over T}\right)E\left[(X_{1}^{(u)}(s)-X_{m,1}^{(u)}(s))^{2}\right]ds \nonumber \\ 
&\quad = v_{2}^{2}(X_{1}^{(u)} - X_{m,1}^{(u)})\left({1 \over Th}\sum_{t=1}^{T}K_{1,h}^{2}\left(u-{t \over T}\right)\right) \sim v_{2}^{2}(X_{1}^{(u)} - X_{m,1}^{(u)}) \to 0\ \text{as $m \to \infty$}. \label{Step3-1}
\end{align}
Observe that if $t_{2}>t_{1}$, then $(X_{t_{1}}^{(u)}, X_{m,t_{1}}^{(u)})$ is independent of $X_{t_{2}-t_{1},t_{1}}^{(u)}$ because
\[
X_{t_{2}-t_{1},t_{2}}^{(u)} = f_{u}(\varepsilon_{t_{2}}, \varepsilon_{t_{2}-1},\dots, \varepsilon_{t_{1}+1}, \varepsilon_{t_{2},t_{1}}^{(t_{2}-t_{1})}, \varepsilon_{t_{2},t_{1}-1}^{(t_{2}-t_{1})},\dots).
\]
Then $E[(X_{t_{1}}^{(u)}(s)-X_{m,t_{1}}^{(u)}(s))X_{t_{2}-t_{1},t_{2}}^{(u)}]=0$ and so
\begin{align*}
&\sum_{1 \leq t_{1}<t_{2}\leq T}K_{1,h}\left(u-{t_{1} \over T}\right)K_{1,h}\left(u-{t_{2} \over T}\right)E\left[(X_{t_{1}}^{(u)}(s)-X_{m,t_{1}}^{(u)}(s))X_{t_{2}}^{(u)}(s)\right]\\
&\quad = \sum_{1 \leq t_{1}<t_{2}\leq T}K_{1,h}\left(u-{t_{1} \over T}\right)K_{1,h}\left(u-{t_{2} \over T}\right)E\left[(X_{t_{1}}^{(u)}(s)-X_{m,t_{1}}^{(u)}(s))(X_{t_{2}}^{(u)}(s)-X_{t_{2}-t_{1},t_{2}}^{(u)}(s))\right]. 
\end{align*}
Applying the Cauchy-Schwarz inequality, we have
\begin{align*}
&\left|\int \sum_{1 \leq t_{1}<t_{2}\leq T}K_{1,h}\left(u-{t_{1} \over T}\right)K_{1,h}\left(u-{t_{2} \over T}\right)E\left[(X_{t_{1}}^{(u)}(s)-X_{m,t_{1}}^{(u)}(s))(X_{t_{2}}^{(u)}(s)-X_{t_{2}-t_{1},t_{2}}^{(u)}(s))\right]ds\right|\\
&\quad \leq \sum_{1 \leq t_{1}<t_{2}\leq T}K_{1,h}\left(u-{t_{1} \over T}\right)K_{1,h}\left(u-{t_{2} \over T}\right) \\
&\quad \quad \times \int E\left[(X_{t_{1}}^{(u)}(s)-X_{m,t_{1}}^{(u)}(s))^{2}\right]^{1/2}E\left[(X_{t_{2}}^{(u)}(s)-X_{t_{2}-t_{1},t_{2}}^{(u)}(s))^{2}\right]^{1/2}ds\\
&\quad \leq \sum_{1 \leq t_{1}<t_{2}\leq T}K_{1,h}\left(u-{t_{1} \over T}\right)K_{1,h}\left(u-{t_{2} \over T}\right) \left(E[\|X_{t_{1}}^{(u)} - X_{m,t_{1}}^{(u)}\|^{2}]\right)^{1/2}\left(E[\|X_{t_{2}}^{(u)} - X_{t_{2}-t_{1},t_{2}}^{(u)}\|^{2}]\right)^{1/2} \\
&\quad = \sum_{1 \leq t_{1}<t_{2}\leq T}K_{1,h}\left(u-{t_{1} \over T}\right)K_{1,h}\left(u-{t_{2} \over T}\right) \left(E[\|X_{1}^{(u)} - X_{m,1}^{(u)}\|^{2}]\right)^{1/2}\left(E[\|X_{1}^{(u)} - X_{t_{2}-t_{1},1}^{(u)}\|^{2}]\right)^{1/2}\\
&\quad \lesssim  v_{2}(X_{1}^{(u)} - X_{m,1}^{(u)})\left(\sum_{t_{1}=1}^{T}K_{1,h}\left(u-{t_{1} \over T}\right)\right)\sum_{k \geq 1}v_{2}(X_{1}^{(u)} - X_{k,1}^{(u)}) \lesssim v_{2}(X_{1}^{(u)} - X_{m,1}^{(u)}) \times O\left(Th\right). 
\end{align*}
For the last inequality, we used the $L^{2}$-$m$-approximability of $X_{t}^{(u)}$. This yields
\begin{align*}
\limsup_{m \to \infty}\limsup_{T \to \infty}{1 \over Th}\left|\int \!\!\!\!\! \sum_{1 \leq t_{1}<t_{2}\leq T}\!\!\!\!\!K_{1,h}\left(u-{t_{1} \over T}\right)K_{1,h}\left(u-{t_{2} \over T}\right)E\left[(X_{t_{1}}^{(u)}(s)-X_{m,t_{1}}^{(u)}(s))X_{t_{2}}^{(u)}(s)\right]ds\right|=0.
\end{align*}
Likewise, we can show that
\begin{align*}
\limsup_{m \to \infty}\limsup_{T \to \infty}{1 \over Th}\left|\int \!\!\!\!\! \sum_{1 \leq t_{1}<t_{2}\leq T}\!\!\!\!\!K_{1,h}\left(u-{t_{1} \over T}\right)K_{1,h}\left(u-{t_{2} \over T}\right)E\left[(X_{t_{1}}^{(u)}(s)-X_{m,t_{1}}^{(u)}(s))X_{m,t_{2}}^{(u)}(s)\right]ds\right|=0.
\end{align*}
Combining these results and (\ref{Step3-1}) yield (\ref{conv1}). 

(Step4) In this step, we will show (\ref{conv2}). Recall that for every integer $m \geq 1$, $\{X_{m,t}^{(u)}\}$ is an $m$-dependent sequence of functions. Let $N>1$ be an integer and let $\{v_{m,j}\}$ and $\{\lambda_{m,j}\}$ are the orthonormal eigenfunctions and the corresponding eigenvalues of the integral operator with the kernel $c_{m}$. Then by the Karhunen-Lo\'eve expansion, we have
\[
X_{m,t}^{(u)}(s) = \sum_{j \geq 1}\langle X_{m,t}^{(u)}, v_{m,j} \rangle v_{m,j}(s).
\]
Define $X_{m,t}^{(u,N)}(s) =  \sum_{j = 1}^{N}\langle X_{m,t}^{(u)}, v_{m,j} \rangle v_{m,j}(s)$.
By the triangle inequality, we have
\begin{align*}
&\left\{E\left[\int\left(\sum_{t=1}^{T}K_{1,h}\left(u-{t \over T}\right)(X_{m,t}^{(u)}(s) - X_{m,t}^{(u,N)}(s))\right)^{2}ds\right]\right\}^{1/2}\\
&\quad \leq \left\{E\left[\int\left(\sum_{t \in I(0)}K_{1,h}\left(u-{t \over T}\right)(X_{m,t}^{(u)}(s) - X_{m,t}^{(u,N)}(s))\right)^{2}ds\right]\right\}^{1/2}  \\
&\quad \quad + \cdots + \left\{E\left[\int\left(\sum_{t \in I(m-1)}K_{1,h}\left(u-{t \over T}\right)(X_{m,t}^{(u)}(s) - X_{m,t}^{(u,N)}(s))\right)^{2}ds\right]\right\}^{1/2}
\end{align*}
where $I(k)=\{t: 1 \leq t \leq T, t=k\ (\text{mod}\ m)\}$, $0 \leq k \leq m-1$. Since $X_{m,t}^{(u)}$ is $m$-dependent, $\sum_{t \in I(\ell)}K_{1,h}\left(u-{t \over T}\right)(X_{m,t}^{(u)}(s) - X_{m,t}^{(u,N)}(s))$ is a sum of independent random variables. Thus we have
\begin{align*}
&{1 \over Th}E\left[\int\left(\sum_{t \in I(\ell)}K_{1,h}\left(u-{t \over T}\right)(X_{m,t}^{(u)}(s) - X_{m,t}^{(u,N)}(s))\right)^{2}ds\right]\\
&\quad =  {1 \over Th}\sum_{t \in I(\ell)}K_{1,h}^{2}\left(u-{t \over T}\right) E\left[\int (X_{m,t}^{(u)}(s) - X_{m,t}^{(u,N)}(s)))^{2}ds\right]\\
&\quad \leq {1 \over Th}\sum_{t=1}^{T}K_{1,h}^{2}\left(u-{t \over T}\right) \sum_{j >N}E[\langle X_{m,1}^{(u)}, v_{m,j}\rangle^{2}] \lesssim  \sum_{j >N}E[\langle X_{m,1}^{(u)}, v_{m,j}\rangle^{2}].
\end{align*}
Since $\lim_{N \to \infty}\sum_{j >N}E[\langle X_{m,1}^{(u)}, v_{m,j}\rangle^{2}] \to 0$, we conclude that for any $r>0$, 
\begin{align}\label{Step4-conv1}
\limsup_{N \to \infty}\limsup_{T \to \infty}P\left(\int \left({1 \over \sqrt{Th}}\sum_{t=1}^{T}K_{1,h}\left(u-{t \over T}\right)(X_{m,t}^{(u)}(s) - X_{m,t}^{(u,N)}(s))\right)^{2}ds>r\right) = 0. 
\end{align}
Observe that
\begin{align*}
{1 \over \sqrt{Th}}\sum_{t=1}^{T}K_{1,h}\left(u-{t \over T}\right)X_{m,t}^{(u,N)} &= \sum_{j=1}^{N}v_{m,j}{1 \over \sqrt{Th}}\sum_{t=1}^{T}K_{1,h}\left(u-{t \over T}\right)\langle X_{m,t}^{(u)}, v_{m,j} \rangle. 
\end{align*} 
Utilizing the orthogonality of $\{v_{m,j}\}$ and applying the central limit theorem for stationary $m$-dependent sequences (Theorem 6.4.2 in \cite{BrDa91}) and the Cram\'er-Wald theorem, we have
\begin{align*}
\left({1 \over \sqrt{Th}}\sum_{t=1}^{T}K_{1,h}\left(u-{t \over T}\right)\langle X_{m,t}^{(u)}, v_{m,j} \rangle, 1\leq j \leq N\right)' \stackrel{d}{\to} Z_{N}(\mathbf{0}, \Lambda_{N})
\end{align*}
where $Z_{N}(\mathbf{0}, \Lambda_{N})$ is a $N$-dimensional Gaussian random variable with zero mean and covariance matrix $\Lambda_{N} = \text{diag}(\lambda_{m,1},\dots, \lambda_{m,N})\int K_{1}^{2}(z)dz$. Then we have
\begin{align}\label{Spte4-conv2}
\sum_{j=1}^{N}v_{m,j}{1 \over \sqrt{Th}}\sum_{t=1}^{T}K_{1,h}\left(u-{t \over T}\right)\langle X_{m,t}^{(u)}, v_{m,j} \rangle \stackrel{d}{\to} \left(\int {K}_{1}^{2}(z)dz\right)^{1/2}\sum_{j=1}^{N}\lambda_{m,j}^{1/2}Z_{j}v_{m,j}\ \text{in $L^{2}$},
\end{align}
where $Z_{j}$ are independent standard Gaussian random variables. It is easy to see that
\begin{align}\label{Step4-conv3}
\int \left(\sum_{j > N}\lambda_{m,j}^{1/2}Z_{j}v_{m,j}\right)^{2}ds = \sum_{j > N}\lambda_{m,j}Z_{j}^{2} \stackrel{p}{\to} 0\ \text{as $N \to \infty$}. 
\end{align}

Combining Theorem 3.2 in \cite{Bi99} with (\ref{Step4-conv1})-(\ref{Step4-conv3}), we have
\[
{1 \over \sqrt{Th}}\sum_{t=1}^{T}K_{1,h}\left(u-{t \over T}\right)X_{m,t}^{(u)} \stackrel{d}{\to} \sum_{j =1}^{\infty}\lambda_{m,j}^{1/2}Z_{j}v_{m,j}\ \text{in $L^{2}$} 
\]
for any $m \geq 1$. Since $\sum_{j =1}^{\infty}\lambda_{m,j}^{1/2}Z_{j}v_{m,j}$ has the same distribution as $G^{(u)}_{m}$, we obtain (\ref{conv2}).

(Step5) In this step, we will show (\ref{conv3}). Since $G^{(u)}_{m}$ is a zero-mean Gaussian process, it is sufficient to show 
\begin{align}\label{c-cm-conv}
\int \int (c_{m}^{(u)}(s_{1},s_{2}) - c^{(u)}(s_{1},s_{2}))^{2}ds_{1}ds_{2} \to \infty\ \text{as $m \to \infty$}.
\end{align} 
First, we show that the kernel $c^{(u)}$ converge in $L^{2}([0,1]^d \times [0,1]^d)$. Observe that 
\begin{align*}
(c_{m}^{(u)}(s_{1},s_{2}))^{2} &\leq 4\left(\left(E[X_{0}^{(u)}(s_{1})X_{0}^{(u)}(s_{2})]\right)^{2} + \left(\sum_{t=1}^{m}E[X_{0}^{(u)}(s_{1})X_{t}^{(u)}(s_{2})]\right)^{2} \right. \\
&\left. \quad \quad + \left(\sum_{t=1}^{m}E[X_{0}^{(u)}(s_{2})X_{t}^{(u)}(s_{1})]\right)^{2}\right)\\
&=: 4\left(C_{m,1}(s_{1},s_{2}) + C_{m,2}(s_{1},s_{2}) + C_{m,3}(s_{1},s_{2})\right) =: \bar{c}_{m}(s_{1},s_{2}). 
\end{align*}
For $C_{m,1}(s_{1},s_{2})$, 
\begin{align}\label{cm-bound1}
\int \int C_{m,1}(s_{1},s_{2}) ds_{1}ds_{2} &\leq \int \int E[(X_{0}^{(u)}(s_{1}))^{2}]E[(X_{0}^{(u)}(s_{2}))^{2}]ds_{1}ds_{2}= v_{2}^{4}(X_{0}^{(u)}). 
\end{align}
For $C_{m,2}(s_{1},s_{2})$, applying the Cauchy-Schwarz inequality yields
\begin{align}
&\int \int C_{m,2}(s_{1},s_{2}) ds_{1}ds_{2} \nonumber \\ 
&\quad \leq \sum_{t_{1}=1}^{m}\sum_{t_{2}=1}^{m}\int \int \left|E[X_{0}^{(u)}(s_{1})X_{t_{1}}^{(u)}(s_{2})]\right|\left|E[X_{0}^{(u)}(s_{1})X_{t_{2}}^{(u)}(s_{2})]\right| ds_{1}ds_{2} \nonumber \\
&\quad = \sum_{t_{1}=1}^{m}\sum_{t_{2}=1}^{m}\int \int \left|E[X_{0}^{(u)}(s_{1})(X_{t_{1}}^{(u)}(s_{2}) - X_{t_{1},t_{1}}^{(u)}(s_{2}))]\right|\left|E[X_{0}^{(u)}(s_{1})(X_{t_{2}}^{(u)}(s_{2}) - X_{t_{2},t_{2}}^{(u)}(s_{2}))]\right| ds_{1}ds_{2}\nonumber \\
&\quad \leq  \sum_{t_{1}=1}^{m}\sum_{t_{2}=1}^{m}\int \int E[(X_{0}^{(u)}(s_{1}))^{2}]E[(X_{t_{1}}^{(u)}(s_{2}) - X_{t_{1},t_{1}}^{(u)}(s_{2}))^{2}]^{1/2}E[(X_{t_{2}}^{(u)}(s_{2}) - X_{t_{2},t_{2}}^{(u)}(s_{2}))^{2}]^{1/2} ds_{1}ds_{2} \nonumber \\
&\quad \leq  v_{2}^{2}(X_{0}^{(u)})\sum_{t_{1}=1}^{m}\sum_{t_{2}=1}^{m}v_{2}(X_{0}^{(u)} - X_{t_{1},0}^{(u)})v_{2}(X_{0}^{(u)} - X_{t_{2},0}^{(u)}) \nonumber \\
&\quad \leq v_{2}^{2}(X_{0}^{(u)})\left(\sum_{t=1}^{\infty}v_{2}(X_{0}^{(u)} - X_{t,0}^{(u)})\right)^{2}<\infty. \label{cm-bound2}
\end{align}
Likewise, 
\begin{align}\label{cm-bound3}
\int \int C_{m,3}(s_{1},s_{2}) ds_{1}ds_{2} \leq v_{2}^{2}(X_{0}^{(u)})\left(\sum_{t=1}^{\infty}v_{2}(X_{0}^{(u)} - X_{t,0}^{(u)})\right)^{2}.
\end{align}
Then we have $\lim_{m \to \infty}\int \int \bar{c}_{m}^{(u)}(s_{1},s_{2})ds_{1}ds_{2}<\infty$.

Combining (\ref{cm-bound1})-(\ref{cm-bound3}) and applying the monotone convergence theorem, we have
\begin{align}
\int \int (c^{(u)}(s_{1},s_{2}))^{2}ds_{1}ds_{2} &= \int \int \lim_{m \to \infty}(c_{m}^{(u)}(s_{1},s_{2}))^{2}ds_{1}ds_{2} \nonumber \\
&\leq \int \int \lim_{m \to \infty}\bar{c}_{m}^{(u)}(s_{1},s_{2})ds_{1}ds_{2} \nonumber \\
&=\lim_{m \to \infty}\int \int \bar{c}_{m}^{(u)}(s_{1},s_{2})ds_{1}ds_{2}<\infty. \label{c-bound}
\end{align}
Applying almost the same argument to show (\ref{c-bound}), we can show (\ref{c-cm-conv}). Therefore, we complete the proof. 
\end{proof}

\begin{proof}[Proof of Theorem \ref{Thm3}]

(Step1) In this step, we give a sketch of the proof.  

In Step2, we will show 
\begin{align}\label{Step1-01}
\int \int \left(\hat{\gamma}_{0}^{(u)}(s_{1},s_{2}) - E[X_{0}^{(u)}(s_{1})X_{0}^{(u)}(s_{2})]\right)^{2}ds_{1}ds_{2} &= o_{P}(1).
\end{align}

Define 
\begin{align*}
\gamma_{t,11}(s_{1},s_{2}) &= {1 \over Th}\sum_{j=t+1}^{T}K_{1,h}\left(u-{j \over T}\right)X_{j}^{(u)}(s_{1})X_{j-t}^{(u)}(s_{2}).
\end{align*}

In Step3, we will show 
\begin{align}\label{Step1-1}
\int \int \left(\sum_{t=1}^{T-1}K_{2}(t/b)\hat{\gamma}_{t}^{(u)}(s_{1},s_{2}) - \sum_{t=1}^{T-1}K_{2}(t/b)\gamma_{t,11}(s_{1},s_{2})\right)^{2}ds_{1}ds_{2} &= o_{P}(1).
\end{align}

In Step4, we will show 
\begin{align}\label{Step1-2}
\int \int \left(\sum_{t=1}^{T-1}K_{2}(t/b)\gamma_{t,11}(s_{1},s_{2}) - c_{1}^{(u)}(s_{1},s_{2})\right)^{2}ds_{1}ds_{2} = o_{P}(1),
\end{align}
where $c_{1}^{(u)}(s_{1},s_{2}) = \sum_{t=1}^{\infty}E[X_{0}^{(u)}(s_{2})X_{t}^{(u)}(s_{1})]$. Hence, (\ref{Step1-1}) and (\ref{Step1-2}) yields
\begin{align}\label{Step1-02}
\int \int \left(\sum_{t=1}^{T-1}K_{2}(t/b)\hat{\gamma}_{t}^{(u)}(s_{1},s_{2}) - c_{1}^{(u)}(s_{1},s_{2})\right)^{2}ds_{1}ds_{2} = o_{P}(1). 
\end{align}

Likewise, we can show 
\begin{align}\label{Step1-03}
\int \int \left(\sum_{t=1}^{T-1}K_{2}(t/b)\hat{\gamma}_{t}^{(u)}(s_{2},s_{1}) - c_{1}^{(u)}(s_{2},s_{1})\right)^{2}ds_{1}ds_{2} = o_{P}(1). 
\end{align}
Combining (\ref{Step1-01}), (\ref{Step1-02}) and (\ref{Step1-03}), we complete the proof. 

(Step2) In this step, we will show 
\begin{align}\label{cov-kernel-conv1}
\int \int \left(\hat{\gamma}_{0}^{(u)}(s_{1},s_{2}) - E[X_{0}^{(u)}(s_{1})X_{0}^{(u)}(s_{2})]\right)^{2}ds_{1}ds_{2} &= o_{P}(1).
\end{align}

Decompose
\begin{align*}
\hat{\gamma}_{0}^{(u)}(s_{1},s_{2}) &= {1 \over Th}\sum_{t=1}^{T}K_{1,h}\left(u-{t \over T}\right)\left(X_{t,T}(s_{1}) - \bar{X}_{T}^{(u)}(s_{1})\right)\left(X_{t,T}(s_{2}) - \bar{X}_{T}^{(u)}(s_{2})\right) \\
&={1 \over Th}\sum_{t=1}^{T}K_{1,h}\left(u-{t \over T}\right)X_{t,T}(s_{1})X_{t,T}(s_{2}) - \bar{X}_{T}^{(u)}(s_{1})\bar{X}_{T}^{(u)}(s_{2})\\
&= {1 \over Th}\sum_{t=1}^{T}K_{1,h}\left(u-{t \over T}\right)X_{t}^{(u)}(s_{1})X_{t}^{(u)}(s_{2})\\
&\quad + {1 \over Th}\sum_{t=1}^{T}K_{1,h}\left(u-{t \over T}\right)X_{t}^{(u)}(s_{1})(X_{t,T}(s_{2})-X_{t}^{(u)}(s_{2}))\\
&\quad + {1 \over Th}\sum_{t=1}^{T}K_{1,h}\left(u-{t \over T}\right)(X_{t,T}(s_{1})-X_{t}^{(u)}(s_{1}))X_{t}^{(u)}(s_{2})\\
&\quad + {1 \over Th}\sum_{t=1}^{T}K_{1,h}\left(u-{t \over T}\right)(X_{t,T}(s_{1})-X_{t}^{(u)}(s_{1}))(X_{t,T}(s_{2})-X_{t}^{(u)}(s_{2}))\\
&\quad - \bar{X}_{T}^{(u)}(s_{1})\bar{X}_{T}^{(u)}(s_{2})\\
&=: \gamma_{0,1}(s_{1},s_{2}) + \gamma_{0,2}(s_{1},s_{2}) + \gamma_{0,3}(s_{1},s_{2})  + \gamma_{0,4}(s_{1},s_{2}) -\bar{X}_{T}^{(u)}(s_{1})\bar{X}_{T}^{(u)}(s_{2}). 
\end{align*}

Define 
\begin{align*}
w_{T}^{(u)} = {1 \over Th}\sum_{t=1}^{T}K_{1,h}\left(u - {t \over T}\right),\ w_{t,T}^{(u)} = {K_{1,h}(u-t/T) \over \sum_{t=1}^{T}K_{1,h}(u-t/T)}. 
\end{align*}
Note that $\sum_{t=1}^{T}w_{t,T}^{(u)} = 1$. Observe that
\begin{align*}
&\int \int \left\{\hat{\gamma}_{0}^{(u)}(s_{1},s_{2}) - E[X_{0}^{(u)}(s_{1})X_{0}^{(u)}(s_{2})]\right\}^{2}ds_{1}ds_{2} \\
&\quad \leq 32\int \int \left\{\gamma_{0,1}^{(u)}(s_{1},s_{2}) - w_{T}^{(u)}E[X_{0}^{(u)}(s_{1})X_{0}^{(u)}(s_{2})]\right\}^{2}ds_{1}ds_{2} + 32\int \int \left(\gamma_{0,2}^{(u)}(s_{1},s_{2})\right)^{2}ds_{1}ds_{2}\\
&\quad \quad + 32\int \int \left(\gamma_{0,3}^{(u)}(s_{1},s_{2})\right)^{2}ds_{1}ds_{2} + 32\int \int \left(\gamma_{0,4}^{(u)}(s_{1},s_{2})\right)^{2}ds_{1}ds_{2}\\
&\quad \quad + 32\int \int \left((1-w_{T}^{(u)})E[X_{0}^{(u)}(s_{1})X_{0}^{(u)}(s_{2})]\right)^{2}ds_{1}ds_{2} + 32\int \int \left(\bar{X}_{T}^{(u)}(s_{1})\bar{X}_{T}^{(u)}(s_{2})\right)^{2}ds_{1}ds_{2}\\
&\quad =: \sum_{j=1}^{6}\Gamma_{j,T}^{(u)}.
\end{align*}
For $\Gamma_{6,T}^{(u)}$, applying Theorem \ref{Thm2}, we have
\begin{align}\label{Gamma-bound1}
\Gamma_{6,T}^{(u)} &= 32\left(\int \left(\bar{X}_{T}^{(u)}(s)\right)^{2}ds \right)^{2} = \|\bar{X}_{T}^{(u)}\|^{4} = O_{P}\left({1 \over T^{2}h^{2}}\right). 
\end{align} 
For $\Gamma_{5,T}^{(u)}$, applying Lemma \ref{lem: g}, we have
\begin{align}
\Gamma_{5,T}^{(u)} &= 32(1-w_{T}^{(u)})^{2}\int \int \left(E[X_{0}^{(u)}(s_{1})X_{0}^{(u)}(s_{2})]\right)^{2}ds_{1}ds_{2} \nonumber \\
&\leq 32(1-w_{T}^{(u)})^{2}\left(E\left[\int (X_{0}^{(u)}(s))^{2}ds\right]\right)^{2} \lesssim o\left(h^{2}\right). \label{Gamma-bound2}
\end{align}
For $\Gamma_{2,T}^{(u)}$, observe that
\begin{align*}
&E\left[\Gamma_{2,T}^{(u)}\right]\\ 
&\quad \lesssim {1 \over (Th)^{2}}\sum_{t=1}^{T}K_{1,h}^{2}\left(u- {t \over T}\right)E\left[\left(\int (X_{t}^{(u)}(s_{1}))^{2}ds_{1}\right)\left(\int (X_{t,T}(s_{2})-X_{t}^{(u)}(s_{2}))^{2}ds_{2} \right)\right]\\
&\quad \quad+ {1 \over (Th)^{2}}\sum_{t_{1}=1}^{T}\sum_{t_{2}=1}^{T}K_{1,h}\left(u- {t_{1} \over T}\right)K_{1,h}\left(u- {t_{2} \over T}\right)\\
&\quad \quad \quad \times \left|E\left[\left(\int X_{t_{1}}^{(u)}(s_{1})X_{t_{2}}^{(u)}(s_{1})ds_{1} \right)\left(\int (X_{t_{1},T}(s_{2})-X_{t_{1}}^{(u)}(s_{2}))(X_{t_{2},T}(s_{1})-X_{t_{2}}^{(u)}(s_{1}))ds_{2} \right)\right]\right|.
\end{align*}
Note that
\begin{align*}
E\left[\left(\int (X_{t}^{(u)}(s_{1}))^{2}ds_{1}\right)\left(\int (X_{t,T}(s_{2})-X_{t}^{(u)}(s_{2}))^{2}ds_{2} \right)\right] &= E\left[\|X_{t}^{(u)}\|^{2}\|X_{t,T}-X_{t}^{(u)}\|^{2}\right]\\
&\leq \left(h + {1 \over T}\right)^{2}E\left[\|X_{t}^{(u)}\|^{2}(U_{t,T}^{(u)})^{2}\right]\\
&\lesssim h^{2}v_{4}^{2}(X_{0}^{(u)})
\end{align*}
and 
\begin{align*}
&\left|E\left[\left(\int X_{t_{1}}^{(u)}(s_{1})X_{t_{2}}^{(u)}(s_{1})ds_{1} \right)\left(\int (X_{t_{1},T}(s_{2})-X_{t_{1}}^{(u)}(s_{2}))(X_{t_{2},T}(s_{1})-X_{t_{2}}^{(u)}(s_{1}))ds_{2} \right)\right]\right|\\
&\quad \leq E\left[\|X_{t_{1}}^{(u)}\| \|X_{t_{2}}^{(u)}\| \|X_{t_{1},T}-X_{t_{1}}^{(u)}\| \|X_{t_{2},T}-X_{t_{2}}^{(u)}\| \right]\\
&\quad \leq \left(h + {1 \over T}\right)^{2}E\left[\|X_{t_{1}}^{(u)}\| \|X_{t_{2}}^{(u)}\| U_{t_{1},T}^{(u)}U_{t_{2},T}^{(u)}\right] \lesssim h^{2}v_{4}^{2}(X_{0}^{(u)}).
\end{align*}
Then we have
\begin{align}\label{Gamma-bound3}
E\left[\Gamma_{2,T}^{(u)}\right] &\lesssim {h^{2} \over Th}\left({1 \over Th}\sum_{t=1}^{T}K_{1,h}^{2}\left(u- {t \over T}\right)\right) + h^{2}\left({1 \over Th}\sum_{t=1}^{T}K_{1,h}\left(u- {t \over T}\right)\right)^{2} \lesssim h^{2}. 
\end{align}
Likewise, we have 
\begin{align}\label{Gamma-bound4}
E[\Gamma_{3,T}^{(u)}] \lesssim h^{2},\ E[\Gamma_{4,T}^{(u)}] \lesssim h^{4}.
\end{align}

For $\Gamma_{1,T}^{(u)}$, applying the ergodic theorem for weighted random variables in a Hilbert space (see \cite{HaPl69} for example), 
\begin{align}\label{Gamma-bound5}
\Gamma_{1,T}^{(u)} &= 32\left(w_{T}^{(u)}\right)^{2}\int \int \left\{(w_{T}^{(u)})^{-1}\gamma_{0,1}^{(u)}(s_{1},s_{2}) - E[X_{0}^{(u)}(s_{1})X_{0}^{(u)}(s_{2})]\right\}^{2}ds_{1}ds_{2} = o_{P}(1). 
\end{align}
Combing (\ref{Gamma-bound1})-(\ref{Gamma-bound5}), we obtain (\ref{cov-kernel-conv1}).

(Step3)
Define 
\begin{align*}
\gamma_{t,11}(s_{1},s_{2}) &= {1 \over Th}\sum_{j=t+1}^{T}K_{1,h}\left(u-{j \over T}\right)X_{j}^{(u)}(s_{1})X_{j-t}^{(u)}(s_{2}).
\end{align*}
In this step, we will show 
\begin{align}\label{cov-kernel-conv2}
\int \int \left(\sum_{t=1}^{T-1}K_{2}(t/b)\hat{\gamma}_{t}^{(u)}(s_{1},s_{2}) - \sum_{t=1}^{T-1}K_{2}(t/b)\gamma_{t,11}(s_{1},s_{2})\right)^{2}ds_{1}ds_{2} &= o_{P}(1).
\end{align}
Observe that 
\begin{align*}
\hat{\gamma}_{t}^{(u)}(s_{1},s_{2}) &= {1 \over Th}\sum_{j=t+1}^{T}K_{1,h}\left(u-{j \over T}\right)X_{j,T}(s_{1})X_{j-t,T}(s_{2})\\
&\quad - \left\{{1 \over Th}\sum_{j=t+1}^{T}K_{1,h}\left(u-{j \over T}\right)X_{j,T}(s_{1})\right\}\bar{X}_{T}^{(u)}(s_{2})\\
&\quad - \left\{{1 \over Th}\sum_{j=t+1}^{T}K_{1,h}\left(u-{j \over T}\right)X_{j-t,T}(s_{2})\right\}\bar{X}_{T}^{(u)}(s_{1})\\
&\quad + \left\{{1 \over Th}\sum_{j=t+1}^{T}K_{1,h}\left(u-{j \over T}\right)\right\}\bar{X}_{T}^{(u)}(s_{1})\bar{X}_{T}^{(u)}(s_{2})\\
&=: \gamma_{t,1}(s_{1},s_{2}) -  \gamma_{t,2}(s_{1},s_{2}) -  \gamma_{t,3}(s_{1},s_{2}) +  \gamma_{t,4}(s_{1},s_{2}).
\end{align*}

(Step3-1) For $\gamma_{t,1}(s_{1},s_{2})$, decompose
\begin{align*}
\gamma_{t,1}(s_{1},s_{2}) &= {1 \over Th}\sum_{j=t+1}^{T}K_{1,h}\left(u-{j \over T}\right)X_{j}^{(u)}(s_{1})X_{j-t}^{(u)}(s_{2})\\
&\quad + {1 \over Th}\sum_{j=t+1}^{T}K_{1,h}\left(u-{j \over T}\right)(X_{j,T}(s_{1})-X_{j}^{(u)}(s_{1}))X_{j-t}^{(u)}(s_{2})\\
&\quad + {1 \over Th}\sum_{j=t+1}^{T}K_{1,h}\left(u-{j \over T}\right)(X_{j-t,T}(s_{2})-X_{j-t}^{(u)}(s_{2}))X_{j}^{(u)}(s_{1})\\
&\quad + {1 \over Th}\sum_{j=t+1}^{T}K_{1,h}\left(u-{j \over T}\right)(X_{j,T}(s_{1})-X_{j}^{(u)}(s_{1}))(X_{j-t,T}(s_{2})-X_{j-t}^{(u)}(s_{2}))\\
&=: \gamma_{t,11}(s_{1},s_{2}) + \gamma_{t,12}(s_{1},s_{2}) + \gamma_{t,13}(s_{1},s_{2}) + \gamma_{t,14}(s_{1},s_{2}).
\end{align*}
For $\gamma_{t,11}(s_{1},s_{2})$, using the triangle inequality, we have
\begin{align*}
&E\left[\left(\int \int \left(\sum_{t=1}^{T-1}K_{2}(t/b)\gamma_{t,12}(s_{1},s_{2})\right)^{2}ds_{1}ds_{2}\right)^{1/2}\right]\\ 
&\quad \leq \sum_{t=1}^{T-1}|K_{2}(t/b)|E\left[\left(\int \int \left(\gamma_{t,12}(s_{1},s_{2})\right)^{2}ds_{1}ds_{2}\right)^{1/2}\right].
\end{align*}
Since we have
\begin{align*}
&E\left[\int \int \left(\gamma_{t,12}(s_{1},s_{2})\right)^{2}ds_{1}ds_{2}\right]\\ 
&\quad \leq {1 \over (Th)^{2}}\sum_{j=t+1}^{T}K_{1,h}^{2}\left(u-{j \over T}\right)E\left[\int\int (X_{j,T}(s_{1})-X_{j}^{(u)}(s_{1}))^{2}(X_{j-t}^{(u)}(s_{2}))^{2}ds_{1}ds_{2} \right]\\
&\quad \quad  + {1 \over (Th)^{2}}\sum_{j_{1}=t+1}^{T}\sum_{j_{2}=t+1}^{T}K_{1,h}\left(u-{j_{1} \over T}\right)K_{1,h}\left(u-{j_{2} \over T}\right)\\
&\quad \quad \quad \times \left|E\left[\int\int (X_{j_{1},T}(s_{1})-X_{j_{1}}^{(u)}(s_{1})) (X_{j_{2},T}(s_{1})-X_{j_{2}}^{(u)}(s_{1}))X_{j_{1}-t}^{(u)}(s_{2})X_{j_{2}-t}^{(u)}(s_{2})ds_{1}ds_{2} \right]\right| \\
&\quad \lesssim {h^{2} \over Th}\left({1 \over Th}\sum_{t=1}^{T}K_{1,h}^{2}\left(u- {t \over T}\right)\right) + h^{2}\left({1 \over Th}\sum_{t=1}^{T}K_{1,h}\left(u- {t \over T}\right)\right)^{2} \lesssim h^{2},
\end{align*}
we then have
\begin{align}\label{ga-t12}
E\left[\int \int \left(\sum_{t=1}^{T-1}K_{2}(t/b)\gamma_{t,12}(s_{1},s_{2})\right)^{2}ds_{1}ds_{2}\right] &\lesssim h^{2}\sum_{t=1}^{T-1}|K_{2}(t/b)| \lesssim h^{2}b \to 0.
\end{align}
Likewise, we can show 
\begin{align}\label{ga-t13}
E\left[\int \int \left(\sum_{t=1}^{T-1}K_{2}(t/b)\gamma_{t,13}(s_{1},s_{2})\right)^{2}ds_{1}ds_{2}\right] &\lesssim h^{2}\sum_{t=1}^{T-1}|K_{2}(t/b)| \lesssim h^{2}b \to 0
\end{align}
and 
\begin{align}\label{ga-t14}
E\left[\int \int \left(\sum_{t=1}^{T-1}K_{2}(t/b)\gamma_{t,14}(s_{1},s_{2})\right)^{2}ds_{1}ds_{2}\right] &\lesssim h^{2}\sum_{t=1}^{T-1}|K_{2}(t/b)| \lesssim h^{4}b \to 0.
\end{align}

(Step3-2) For $\gamma_{t,2}(s_{1},s_{2})$, decompose
\begin{align*}
\gamma_{t,2}(s_{1},s_{2}) &=\left\{{1 \over Th}\sum_{j=t+1}^{T}K_{1,h}\left(u-{j \over T}\right)X_{j}^{(u)}(s_{1})\right\}\bar{X}_{T}^{(u)}(s_{2}) \\
&\quad + \left\{{1 \over Th}\sum_{j=t+1}^{T}K_{1,h}\left(u-{j \over T}\right)(X_{j,T}(s_{1})-X_{j}^{(u)}(s_{1}))\right\}\bar{X}_{T}^{(u)}(s_{2})\\
&\quad =: \gamma_{t,21}(s_{1},s_{2}) + \gamma_{t,22}(s_{1},s_{2}). 
\end{align*}
For $\gamma_{t,21}(s_{1},s_{2})$, 
\begin{align}\label{ga-t21-1}
\int \int (\gamma_{t,21}(s_{1},s_{2}))^{2}ds_{1}ds_{2} &= \left(\int \left({1 \over Th}\sum_{j=t+1}^{T}K_{1,h}\left(u-{j \over T}\right)X_{j}^{(u)}(s_{1})\right)^{2}ds_{1}\right)\left(\int (\bar{X}_{T}^{(u)}(s_{2}))^{2}ds_{2}\right).
\end{align}
Since 
\begin{align*}
&E\left[\int \left({1 \over Th}\sum_{j=t+1}^{T}K_{1,h}\left(u-{j \over T}\right)X_{j}^{(u)}(s)\right)^{2}ds\right]\\
&\quad = {1 \over (Th)^{2}}\sum_{j=t+1}^{T}K_{1,h}^{2}\left(u - {j \over T}\right)E\left[\int (X_{j}^{(u)}(s))^{2}ds\right]\\
&\quad \quad + {2 \over (Th)^{2}}\sum_{t+1\leq j_{1}<t_{2}\leq T}K_{1,h}\left(u - {j_{1} \over T}\right)K_{1,h}\left(u - {j_{2} \over T}\right)E\left[\int X_{j_{1}}^{(u)}(s) X_{j_{2}}^{(u)}(s)ds\right]\\
&\quad  = {v_{2}^{2}(X_{0}^{(u)}) \over (Th)^{2}}\sum_{j=t+1}^{T}K_{1,h}^{2}\left(u - {j \over T}\right)\\
&\quad \quad + {2 \over (Th)^{2}}\sum_{t+1\leq j_{1}<t_{2}\leq T}K_{1,h}\left(u - {j_{1} \over T}\right)K_{1,h}\left(u - {j_{2} \over T}\right)E\left[\int X_{j_{1}}^{(u)}(s) (X_{j_{2}}^{(u)}(s) - X_{j_{2}-j_{1},j_{2}}^{(u)}(s))ds\right]\\
&\quad \lesssim {1 \over (Th)^{2}}\sum_{j=t+1}^{T}K_{1,h}^{2}\left(u - {j \over T}\right)\\
&\quad \quad + {1 \over (Th)^{2}}\sum_{j=1}^{T}K_{1,h}\left(u - {j \over T}\right)\sum_{k \geq 1}v_{2}(X_{0}^{(u)})v_{2}(X_{0}^{(u)} - X_{k,0}^{(u)}) = O\left({1 \over Th}\right), 
\end{align*}
we then have 
\begin{align}\label{ga-t21-2}
\max_{1 \leq t \leq T}E\left[\int \left({1 \over Th}\sum_{j=t+1}^{T}K_{1,h}\left(u-{j \over T}\right)X_{j}^{(u)}(s)\right)^{2}ds\right] = O\left({1 \over Th}\right).
\end{align}
Combining (\ref{ga-t21-1}) and (\ref{ga-t21-2}), we have  
\begin{align}\label{ga-t21-bound}
&E\left[\left(\int \int \left(\sum_{t=1}^{T-1}K_{2}(t/b)\gamma_{t,21}(s_{1},s_{2})\right)^{2}ds_{1}ds_{2}\right)^{1/2}\right] \nonumber \\
&\quad \leq \sum_{t=1}^{T-1}|K_{2}(t/b)|\max_{1 \leq t \leq T}E\left[\int \left({1 \over Th}\sum_{j=t+1}^{T}K_{1,h}\left(u-{j \over T}\right)X_{j}^{(u)}(s)\right)^{2}ds\right]^{1/2}\!\!\!\!E[\|\bar{X}_{T}^{(u)}\|^{2}]^{1/2} \lesssim {b \over Th} \to 0. 
\end{align}
Likewise, we can show
\begin{align}\label{ga-t22-bound}
E\left[\left(\int \int \left(\sum_{t=1}^{T-1}K_{2}(t/b)\gamma_{t,22}(s_{1},s_{2})\right)^{2}ds_{1}ds_{2}\right)^{1/2}\right]  &\lesssim \sqrt{b^{2}h \over T} \to 0. 
\end{align}

(Step3-3) For $\gamma_{t,3}(s_{1},s_{2})$, decompose
\begin{align*}
\gamma_{t,3}(s_{1},s_{2}) &= \left\{{1 \over Th}\sum_{j=t+1}^{T}K_{1,h}\left(u-{j \over T}\right)X_{j-t}^{(u)}(s_{2})\right\}\bar{X}_{T}^{(u)}(s_{1})\\
&\quad + \left\{{1 \over Th}\sum_{j=t+1}^{T}K_{1,h}\left(u-{j \over T}\right)(X_{j-t,T}(s_{2})-X_{j-t}^{(u)}(s_{2}))\right\}\bar{X}_{T}^{(u)}(s_{1})\\
& =: \gamma_{t,31}(s_{1},s_{2}) + \gamma_{t,32}(s_{1},s_{2}).
\end{align*}
For $\gamma_{t,32}(s_{1},s_{2})$, applying almost the same argument to show (\ref{ga-t12}), we have
\begin{align}\label{ga-t32-bound}
&E\left[\left(\int \int \left(\sum_{t=1}^{T-1}K_{2}(t/b)\gamma_{t,32}(s_{1},s_{2})\right)^{2}ds_{1}ds_{2}\right)^{1/2}\right] \nonumber \\
&\quad \leq \sum_{t=1}^{T-1}|K_{2}(t/b)|E\left[\int \left({1 \over Th}\sum_{j=t+1}^{T}K_{1,h}\left(u-{j \over T}\right)(X_{j-t,T}(s)-X_{j-t}^{(u)}(s))\right)^{2}ds\right]^{1/2}\!\!\!\!E[\|\bar{X}_{T}^{(u)}\|^{2}]^{1/2} \nonumber \\
&\quad \lesssim \sum_{t=1}^{b}{t+1 \over T}\sqrt{1 \over Th} \sim {b^{2} \over T\sqrt{Th}} \lesssim {T^{2}h^{2} \over T\sqrt{Th}} = \sqrt{Th^{3}}  \to 0. 
\end{align}
Likewise, applying almost the same argument to show (\ref{ga-t21-bound}), we have 
\begin{align}\label{ga-t31-bound}
&E\left[\left(\int \int \left(\sum_{t=1}^{T-1}K_{2}(t/b)\gamma_{t,31}(s_{1},s_{2})\right)^{2}ds_{1}ds_{2}\right)^{1/2}\right] \nonumber \\
&\quad \leq \sum_{t=1}^{T-1}|K_{2}(t/b)|\max_{1 \leq t \leq T}E\left[\int \left({1 \over Th}\sum_{j=t+1}^{T}K_{1,h}\left(u-{j \over T}\right)X_{j-t}^{(u)}(s)\right)^{2}ds\right]^{1/2}\!\!\!\!E[\|\bar{X}_{T}^{(u)}\|^{2}]^{1/2} \lesssim {b \over Th} \to 0. 
\end{align}

(Step3-4) For $\gamma_{t,4}(s_{1},s_{2})$, we have 
\begin{align*}
\left(\int \int (\gamma_{t,4}(s_{1},s_{2}))^{2}ds_{1}ds_{2}\right)^{1/2} &= {1 \over Th}\sum_{j=t+1}^{T}K_{1,h}\left(u-{j \over T}\right)\|\bar{X}_{T}^{(u)}\|^{4}\\
&\leq {1 \over Th}\sum_{t=1}^{T}K_{1,h}\left(u-{t \over T}\right)\|\bar{X}_{T}^{(u)}\|^{2} \lesssim \|\bar{X}_{T}^{(u)}\|^{2}.
\end{align*}
Then we have 
\begin{align}\label{ga-t4-bound}
E\left[\left(\int \int \left(\sum_{t=1}^{T-1}K_{2}(t/b)\gamma_{t,4}(s_{1},s_{2})\right)^{2}ds_{1}ds_{2}\right)^{1/2}\right] &\lesssim \sum_{t=1}^{T-1}|K_{2}(t/b)|E[\|\bar{X}_{T}^{(u)}\|^{2}] \lesssim {b \over Th} \to 0. 
\end{align}

(Step3-5) Combining (\ref{ga-t12})-(\ref{ga-t14}), (\ref{ga-t21-bound}), (\ref{ga-t22-bound}), (\ref{ga-t32-bound}), (\ref{ga-t31-bound}), and (\ref{ga-t4-bound}), we have (\ref{cov-kernel-conv2}).

(Step4) In this step, we will show 
\begin{align}\label{cov-kernel-conv3}
\int \int \left(\sum_{t=1}^{T-1}K_{2}(t/b)\gamma_{t,11}(s_{1},s_{2}) - c_{1}^{(u)}(s_{1},s_{2})\right)^{2}ds_{1}ds_{2} = o_{P}(1). 
\end{align}
Define
\begin{align*}
\gamma_{t,11}^{(m)}(s_{1},s_{2}) &= {1 \over Th}\sum_{j=t+1}^{T}K_{1,h}\left(u-{j \over T}\right)X_{m,j}^{(u)}(s_{1})X_{m,j-t}^{(u)}(s_{2}),\\
c_{1}^{(u)}(s_{1},s_{2}) &= \sum_{t=1}^{\infty}E[X_{0}^{(u)}(s_{2})X_{t}^{(u)}(s_{1})],\ c_{1,m}^{(u)}(s_{1},s_{2}) = \sum_{t=1}^{m}E[X_{m,0}^{(u)}(s_{2})X_{m,t}^{(u)}(s_{1})],\\
\bar{c}_{1,m}^{(u)}(s_{1},s_{2}) &= \sum_{t=1}^{m}\omega_{t,T}^{(u)}E[X_{m,0}^{(u)}(s_{2})X_{m,t}^{(u)}(s_{1})],
\end{align*}
where $\omega_{t,T}^{(u)} = {1 \over Th}\sum_{j=t+1}^{T}K_{1,h}(u-j/T)$. Note that $\max_{1\leq t \leq T-1}\omega_{t,T}^{(u)} \lesssim 1$ uniformly in $u$. 
Let $m \geq 1$ be a fixed constant. Observe that
\begin{align*}
&\int \!\! \int \!\! \left(\sum_{t=1}^{T-1}K_{2}(t/b)\gamma_{t,11}(s_{1},s_{2}) - c_{1}^{(u)}(s_{1},s_{2})\right)^{2}ds_{1}ds_{2}\\
&\quad \leq 64\!\!  \int \!\! \int \!\! \left(\sum_{t=1}^{T-1}K_{2}(t/b)(\gamma_{t,11}(s_{1},s_{2}) - \gamma_{t,11}^{(m)}(s_{1},s_{2}))\right)^{2}ds_{1}ds_{2}\\
&\quad \quad + 64\!\!  \int \!\! \int \!\! \left(\sum_{t=m+1}^{T-1}K_{2}(t/b)\gamma_{t,11}^{(m)}(s_{1},s_{2}) \right)^{2}\!\! ds_{1}ds_{2} + 64\!\! \int \!\! \int \!\! \left(\sum_{t=1}^{m}(K_{2}(t/b)-1)\gamma_{t,11}^{(m)}(s_{1},s_{2})\right)^{2}\!\!ds_{1}ds_{2}\\
&\quad \quad + 64\!\! \int \!\! \int \!\! \left(\sum_{t=1}^{m}\gamma_{t,11}^{(m)}(s_{1},s_{2}) - \bar{c}_{1,m}^{(u)}(s_{1},s_{2})\right)^{2}\!\! ds_{1}ds_{2}+ 64\!\!  \int \!\! \int \!\! \left(\bar{c}_{1,m}^{(u)}(s_{1},s_{2}) - c_{1,m}^{(u)}(s_{1},s_{2})\right)^{2}\!\! ds_{1}ds_{2}\\ 
&\quad \quad+ 64\int \int \left(c_{1,m}^{(u)}(s_{1},s_{2}) - c_{1}^{(u)}(s_{1},s_{2})\right)^{2}ds_{1}ds_{2}\\
&\quad =: 64(q_{1,T} + q_{2,T} + q_{3,T} + q_{4,T} + q_{5,T}+q_{6,T}). 
\end{align*}

(Step4-1) For $q_{6,T}$, since $\int \int (c_{m}^{(u)}(s_{1},s_{2}) - c^{(u)}(s_{1},s_{2}))^{2}ds_{1}ds_{2}  \to 0$ as $m \to \infty$, we have 
\begin{align}\label{q6-bound}
q_{5,T} = \int \int \left(c_{1,m}^{(u)}(s_{1},s_{2}) - c_{1}^{(u)}(s_{1},s_{2})\right)^{2}ds_{1}ds_{2} \to 0\ \text{as $m \to \infty$}.
\end{align} 

(Step4-2) For $q_{5,T}$, 
\begin{align}\label{q5-bound}
q_{5,T}^{1/2} &\leq \sum_{t=1}^{m}|\omega_{t,T}^{(u)}-1|\left(\int \int (c_{1,m}^{(u)}(s_{1},s_{2}))^{2}ds_{1}ds_{2}\right)^{1/2} \to 0\ \text{as $T \to \infty$}
\end{align}
since $\lim_{T \to \infty}|\omega_{t,T}^{(u)}-1|$ for each $1 \leq t \leq m$. 

(Step4-3) In this step, we will show that for every $m \geq 1$, $q_{4,T} = o_{P}(1)$ as $T \to \infty$.
Applying the ergodic theorem for weighted random variables in a Hilbert space, for $1 \leq t \leq m$, we have
\begin{align*}
(\omega_{t,T}^{(u)})^{2}\int \int \left((\omega_{t,T}^{(u)})^{-1}\gamma_{t,11}^{(m)}(s_{1},s_{2}) - E[X_{m,0}^{(u)}(s_{1})X_{m,t}^{(u)}(s_{2})]\right)^{2}ds_{1}ds_{2} = o_{P}(1).
\end{align*}
Then using the triangular inequality, we have
\begin{align}\label{q4-bound}
q_{4,T}^{1/2} &\leq \sum_{t=1}^{m}\omega_{t,T}^{(u)}\left(\int \int \left((\omega_{t,T}^{(u)})^{-1}\gamma_{t,11}^{(m)}(s_{1},s_{2}) - E[X_{m,0}^{(u)}(s_{1})X_{m,t}^{(u)}(s_{2})]\right)^{2}ds_{1}ds_{2}\right)^{1/2} = o_{P}(1).
\end{align}

(Step4-4) For $q_{3,T}$, applying the triangular inequality, we have 
\begin{align*}
q_{3,T}^{1/2} &\leq \sum_{t=1}^{m}|K_{2}(t/b)-1| \left(\int \int \left(\gamma_{t,11}^{(m)}(s_{1},s_{2})\right)^{2}ds_{1}ds_{2}\right)^{1/2}\\
&\leq \sum_{t=1}^{m}|K_{2}(t/b)-1| \left|\omega_{t,T}^{(u)}\right|  \left(\int \int \left(E[X_{m,0}^{(u)}(s_{2})X_{m,t}^{(u)}(s_{1})]\right)^{2}ds_{1}ds_{2}\right)^{1/2}\\
&\quad + \sum_{t=1}^{m}|K_{2}(t/b)-1| \left|\omega_{t,T}^{(u)}\right| \left(\int \int \left((\omega_{t,T}^{(u)})^{-1}\gamma_{t,11}^{(m)}(s_{1},s_{2}) - E[X_{m,0}^{(u)}(s_{2})X_{m,t}^{(u)}(s_{1})]\right)^{2}ds_{1}ds_{2}\right)^{1/2}.
\end{align*}
Assumption \ref{Ass-KB2} yields that $\max_{1 \leq t \leq m}|K_{2}(t/b)-1| \to 0$ as $T \to \infty$. Then combining the results in Steps 4-2 and 4-3, we have
\begin{align}\label{q3-bound}
q_{3,T} &= o_{P}(1)\ \text{as $T \to \infty$}. 
\end{align}

(Step4-5) Let $\lfloor x \rfloor$ denote the integer part of $x \in \mathbb{R}$. For $q_{2,T}$, observe that
\begin{align*}
&E\left[q_{2,T}\right]\\
&\quad = \sum_{t_{1}=m+1}^{\lfloor C_{2}b \rfloor + 1}\sum_{t_{2}=m+1}^{\lfloor C_{2}b \rfloor + 1}K_{2}(t_{1}/b)K_{2}(t_{2}/b){1 \over (Th)^{2}}\sum_{j_{1}=t_{1}+1}^{T}\sum_{j_{2}=t_{2}+1}^{T}K_{1,h}\left(u-{j_{1} \over T}\right)K_{1,h}\left(u-{j_{2} \over T}\right)\\
&\quad \quad \times \int \int E\left[X_{m,j_{1}}^{(u)}(s_{1})X_{m,j_{1}-t_{1}}^{(u)}(s_{1})X_{m,j_{2}}^{(u)}(s_{2})X_{m,j_{2}-t_{2}}^{(u)}(s_{2})\right]ds_{1}ds_{2}.
\end{align*}
Note that $\{X_{m,t}^{(u)}\}_{t \in \mathbb{Z}}$ is an $m$-dependent sequence and $t_{1}, t_{2} \geq m+1$. Then $X_{m,j_{1}}^{(u)}$ and $X_{m,j_{1}-t_{1}}^{(u)}$ ($X_{m,j_{2}}^{(u)}$ and $X_{m,j_{2}-t_{2}}^{(u)}$) are independent. This implies that the number of terms when the term $E\left[X_{m,j_{1}}^{(u)}(s_{1})X_{m,j_{1}-t_{1}}^{(u)}(s_{1})X_{m,j_{2}}^{(u)}(s_{2})X_{m,j_{2}-t_{2}}^{(u)}(s_{2})\right]$ is not zero is $O(bTh)$. Consequently, 
\begin{align}\label{q2-bound}
E\left[q_{2,T}\right] &\lesssim {b \over Th} \to 0\ \text{as $T \to \infty$}. 
\end{align}

(Step4-6) In this step, we will show that for any $r>0$,
\begin{align}\label{q1-bound}
\lim_{m \to \infty}\lim_{T \to \infty}P\left(q_{1,T}>r\right)=0. 
\end{align}
Observe that
\begin{align*}
&\sum_{t=1}^{T-1}K_{2}(t/b)(\gamma_{t,11}(s_{1},s_{2}) - \gamma_{t,11}^{(m)}(s_{1},s_{2}))\\
&\quad = {1 \over Th}\sum_{t=1}^{T-1}K_{2}(t/b)\sum_{j=t+1}^{T}K_{1,h}\left(u-{j \over T}\right)(X_{j}^{(u)}(s_{1})X_{j-t}^{(u)}(s_{2})-X_{m,j}^{(u)}(s_{1})X_{m,j-t}^{(u)}(s_{2}))\\
&\quad = {1 \over Th}\left(\sum_{t=1}^{m} + \sum_{t=m+1}^{\lfloor C_{2}b \rfloor +1}\right)K_{2}(t/b)\sum_{j=t+1}^{T}K_{1,h}\left(u-{j \over T}\right)(X_{j}^{(u)}(s_{1})X_{j-t}^{(u)}(s_{2})-X_{m,j}^{(u)}(s_{1})X_{m,j-t}^{(u)}(s_{2}))\\
&\quad =: q_{11,T}^{(m)}(s_{1},s_{2}) + q_{12,T}^{(m)}(s_{1},s_{2}). 
\end{align*}
Decompose
\begin{align*}
&X_{j}^{(u)}(s_{1})X_{j-t}^{(u)}(s_{2})-X_{m,j}^{(u)}(s_{1})X_{m,j-t}^{(u)}(s_{2})\\
&\quad = (X_{j}^{(u)}(s_{1}) - X_{m,j}^{(u)}(s_{1}))X_{j-t}^{(u)}(s_{2}) +  (X_{j-t}^{(u)}(s_{2}) - X_{m,j-t}^{(u)}(s_{2}))X_{m,j}^{(u)}(s_{1}). 
\end{align*}
Then applying the triangular inequality, we have 
\begin{align}\label{q_11t-1}
&E\left[\left\{\int \int \left({1 \over Th}\sum_{t=1}^{m}K_{2}(t/b)\sum_{j=t+1}^{T}K_{1,h}\left(u-{j \over T}\right)(X_{j}^{(u)}(s_{1}) - X_{m,j}^{(u)}(s_{1}))X_{j-t}^{(u)}(s_{2})\right)^{2}ds_{1}ds_{2}\right\}^{1/2}\right] \nonumber \\
&\quad \leq  {1 \over Th}\sum_{t=1}^{m}|K_{2}(t/b)|\sum_{j=t+1}^{T}K_{1,h}\left(u-{j \over T}\right) \nonumber \\
&\quad \quad \times E\left[\left(\int (X_{j}^{(u)}(s_{1}) - X_{m,j}^{(u)}(s_{1}))^{2}ds_{1}\right)^{1/2}\left(\int (X_{j-t}^{(u)}(s_{2}))^{2}ds_{2}\right)^{1/2}\right] \nonumber \\
&\quad \lesssim \left({1 \over Th}\sum_{t=1}^{T}K_{1,h}\left(u-{t \over T}\right)\right)mE\left[\|X_{0}^{(u)} - X_{m,0}^{(u)}\|^{2}\right]^{1/2}E\left[\|X_{0}^{(u)}\|^{2}\right]^{1/2} \nonumber \\
&\quad \lesssim mv_{2}(X_{0}^{(u)} - X_{m,0}^{(u)}) \to 0\ \text{as $m \to \infty$}.
\end{align}
Likewise, we can show 
\begin{align}\label{q_11t-2}
&E\left[\left\{\int \int \left({1 \over Th}\sum_{t=1}^{m}K_{2}(t/b)\sum_{j=t+1}^{T}K_{1,h}\left(u-{j \over T}\right)(X_{j-t}^{(u)}(s_{2}) - X_{m,j-t}^{(u)}(s_{2}))X_{m,j}^{(u)}(s_{1})\right)^{2}ds_{1}ds_{2}\right\}^{1/2}\right] \nonumber \\
&\quad \lesssim mv_{2}(X_{0}^{(u)} - X_{m,0}^{(u)}) \to 0\ \text{as $m \to \infty$}.
\end{align}
Combining (\ref{q_11t-1}) and (\ref{q_11t-2}), we have
\begin{align}\label{q_11t-conv1}
\lim_{m \to \infty}\lim_{T \to \infty}P\left(\int \int (q_{11,T}^{(m)}(s_{1},s_{2}))^{2}ds_{1}ds_{2}>r_{1}\right) = 0 
\end{align}
for any $r_{1}>0$.

Further, decompose
\begin{align*}
X_{j}^{(u)}(s_{1})X_{j-t}^{(u)}(s_{2}) &= (X_{j}^{(u)}(s_{1}) - X_{t,j}^{(u)}(s_{1}))X_{j-t}^{(u)}(s_{2}) + X_{t,j}^{(u)}(s_{1})X_{j-t}^{(u)}(s_{2}). 
\end{align*}
Then we have 
\begin{align}\label{q_12t-1}
&E\left[\left\{\int \int \left({1 \over Th}\sum_{t=m+1}^{\lfloor C_{2}b \rfloor +1}K_{2}(t/b)\sum_{j=t+1}^{T}K_{1,h}\left(u-{j \over T}\right)(X_{j}^{(u)}(s_{1}) - X_{t,j}^{(u)}(s_{1}))X_{j-t}^{(u)}(s_{2})\right)^{2}ds_{1}ds_{2}\right\}^{1/2}\right] \nonumber \\
&\quad \leq  {1 \over Th}\sum_{t=m+1}^{\lfloor C_{2}b \rfloor +1}|K_{2}(t/b)|\sum_{j=t+1}^{T}K_{1,h}\left(u-{j \over T}\right) \nonumber \\
&\quad \quad \times E\left[\left(\int (X_{j}^{(u)}(s_{1}) - X_{t,j}^{(u)}(s_{1}))^{2}ds_{1}\right)^{1/2}\left(\int (X_{j-t}^{(u)}(s_{2}))^{2}ds_{2}\right)^{1/2}\right] \nonumber \\
&\quad \lesssim \left({1 \over Th}\sum_{t=1}^{T}K_{1,h}\left(u-{t \over T}\right)\right)\sum_{t=m+1}^{\lfloor C_{2}b \rfloor +1}E\left[\|X_{0}^{(u)} - X_{t,0}^{(u)}\|^{2}\right]^{1/2}E\left[\|X_{0}^{(u)}\|^{2}\right]^{1/2} \nonumber \\
&\quad \lesssim \sum_{t=m+1}^{\infty}v_{2}(X_{0}^{(u)} - X_{m,0}^{(u)}) \to 0\ \text{as $m \to \infty$}.
\end{align}
Applying almost the same argument to show (\ref{q2-bound}), we have
\begin{align}\label{q_12t-2}
&E\left[\left\{\int \int \left({1 \over Th}\sum_{t=m+1}^{\lfloor C_{2}b \rfloor +1}K_{2}(t/b)\sum_{j=t+1}^{T}K_{1,h}\left(u-{j \over T}\right)X_{t,j}^{(u)}(s_{1})X_{j-t}^{(u)}(s_{2})\right)^{2}ds_{1}ds_{2}\right\}^{1/2}\right] \nonumber \\
&\quad \lesssim {b \over Th} \to 0\ \text{as $T \to \infty$}. 
\end{align}
Combining (\ref{q_12t-1}) and (\ref{q_12t-2}), we have
\begin{align}\label{q_12t-a}
\lim_{m \to \infty}\lim_{T \to \infty}P\left(\int \int \left({1 \over Th}\sum_{t=m+1}^{T-1}K_{2}(t/b)\sum_{j=t+1}^{T}K_{1,h}\left(u-{j \over T}\right)X_{j}^{(u)}(s_{1})X_{j-t}^{(u)}(s_{2})\right)^{2}ds_{1}ds_{2}>r_{2}\right) = 0
\end{align}
for any $r_{2}>0$. Likewise, we can show 
\begin{align}\label{q_12t-b}
\lim_{m \to \infty}\lim_{T \to \infty}\!\!P\!\left(\int \!\! \int \!\! \left({1 \over Th}\sum_{t=m+1}^{T-1}K_{2}(t/b)\sum_{j=t+1}^{T}K_{1,h}\left(u-{j \over T}\right)X_{m,j}^{(u)}(s_{1})X_{m,j-t}^{(u)}(s_{2})\right)^{2}ds_{1}ds_{2}>r_{2}\right) = 0
\end{align}
for any $r_{2}>0$. 
Combining (\ref{q_12t-a}) and (\ref{q_12t-a}), we have
\begin{align}\label{q_12t-conv1}
\lim_{m \to \infty}\lim_{T \to \infty}P\left(\int \int (q_{12,T}^{(m)}(s_{1},s_{2}))^{2}ds_{1}ds_{2}>r_{1}\right) = 0
\end{align}
for any $r_{1}>0$. Consequently, (\ref{q_11t-conv1}) and (\ref{q_12t-conv1}) yield (\ref{q1-bound}). 

(Step4-7) Combining (\ref{q6-bound}), (\ref{q5-bound}), (\ref{q4-bound}), (\ref{q3-bound}), (\ref{q2-bound}), and (\ref{q1-bound}) yield (\ref{cov-kernel-conv3}). 
\end{proof}

\subsection{Proofs for Section \ref{Sec: Applications}}

We omit the proofs of Propositions \ref{Prop1}, \ref{Prop2} and \ref{Prop3} since they immediately follow from the results in Section \ref{Sec: Main}. 

\begin{proof}[Proof of Corollary \ref{Cor2}]
For $1 \leq j \leq q_{0}-1$, from (\ref{two-sample-eigen-consistency}) with $q = q_{0}$, we have 
\begin{align}\label{Cor2-1}
{\hat{\eta}_{u,j+1} \over \hat{\eta}_{u,j}} \stackrel{p}{\to} {\eta_{u,j+1} \over \eta_{u,j}}>0.
\end{align}
For $j \geq q_{0}$, applying Lemma \ref{lem: HK-lem22}, we have
\begin{align}\label{Cor2-1-1}
|\hat{\eta}_{u,j+1}|  \leq \|\hat{\Phi}^{(u)} - \Phi^{(u)}\|_{\mathcal{S}} &\stackrel{p}{\to} 0.
\end{align} 
Thus we have
\begin{align}\label{Cor2-2}
{\hat{\eta}_{u,q_{0}+1} \over \hat{\eta}_{u,q_{0}}} &\stackrel{p}{\to} 0.
\end{align}
For $j > q_{0}$, (\ref{Cor2-1-1}) yields
\begin{align*}
P\left(\left|{\hat{\eta}_{u,j} \over \hat{\eta}_{u,1}}\right| < \varepsilon_{0}, \left|{\hat{\eta}_{u,j+1} \over \hat{\eta}_{u,1}}\right| < \varepsilon_{0}\right) &\to 1. 
\end{align*}
Hence 
\begin{align}\label{Cor2-3}
P\left({\hat{\eta}_{u,j+1} \over \hat{\eta}_{u,j}} = 0/0 =1\right) &\to 1. 
\end{align}
With (\ref{Cor2-1}), (\ref{Cor2-2}) and (\ref{Cor2-3}), we complete the proof.
\end{proof}

\section{Auxiliary Lemmas}

In this section, we provide some auxiliary lemmas used in the proofs of main results. Recall that $\mathcal{L}$ denotes the space of bounded (continuous) linear operators on $H$ with the norm 
\[
\|\Psi\|_{\mathcal{L}} = \sup\{\|\Psi(x)\|: \|x\|\leq 1\}. 
\] 
Let $C_{1}, C_{2} \in \mathcal{L}$ be two compact operators with the following singular value decomposition:
\begin{align}\label{def: compact-op}
C_{1}(x) &= \sum_{j \geq 1}\lambda_{1,j}\langle x, v_{1,j} \rangle f_{1,j},\ C_{2}(x) = \sum_{j \geq 1}\lambda_{2,j}\langle x, v_{2,j} \rangle f_{2,j}\ x \in H,
\end{align}
where $\{\lambda_{1,j}\}$ and $\{\lambda_{2,j}\}$ are sequences of nonnegative constants, and $\{v_{1,j}\}$, $\{v_{2,j}\}$, $\{f_{1,j}\}$ and $\{f_{2,j}\}$ are orthonormal bases of $H$.

\begin{lemma}[Lemma 1.6 in \cite{GoGoKa90}]\label{lem: HK-lem22}
Suppose $C_{1},C_{2} \in \mathcal{L}$ are two compact operators with singular value decompositions (\ref{def: compact-op}). Then, for each $j \geq 1$, $|\lambda_{1,j} - \lambda_{2,j}| \leq \|C_{1}-C_{2}\|_{\mathcal{L}}$. 
\end{lemma}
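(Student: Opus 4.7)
The plan is to invoke the min-max (Courant-Fischer) characterization of singular values for compact operators on a Hilbert space. Concretely, for any compact $C \in \mathcal{L}$ with singular values $\lambda_{1}(C) \geq \lambda_{2}(C) \geq \cdots \geq 0$ and right singular vectors $\{v_{k}\}$ as in (\ref{def: compact-op}), the identity
\begin{align*}
\lambda_{j}(C) = \min_{\substack{V \subset H \\ \dim V \leq j-1}} \sup_{\substack{x \in V^{\perp} \\ \|x\|=1}} \|C(x)\|
\end{align*}
holds, with the minimum attained by $V = \mathrm{span}(v_{1}, \ldots, v_{j-1})$. This is the standard consequence of applying the spectral theorem to the self-adjoint compact operator $C^{*}C$ and invoking the usual Courant-Fischer theorem for its eigenvalues.

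Given this, fix $j \geq 1$ and set $V_{1} = \mathrm{span}(v_{1,1}, \ldots, v_{1,j-1})$ (interpreted as $\{0\}$ when $j=1$). For any unit vector $x \in V_{1}^{\perp}$, expanding $x$ in $\{v_{1,k}\}_{k \geq j}$ and using Parseval together with the orthonormality of $\{f_{1,k}\}$ gives $\|C_{1}(x)\| \leq \lambda_{1,j}$. By the triangle inequality and the definition of the operator norm,
\begin{align*}
\|C_{2}(x)\| \leq \|C_{1}(x)\| + \|(C_{2} - C_{1})(x)\| \leq \lambda_{1,j} + \|C_{1} - C_{2}\|_{\mathcal{L}}.
\end{align*}
Taking the supremum over unit $x \in V_{1}^{\perp}$ and then applying the min-max formula to $\lambda_{2,j}$ (which bounds it by the supremum associated with any admissible subspace, in particular $V_{1}$) yields
\begin{align*}
\lambda_{2,j} \leq \lambda_{1,j} + \|C_{1} - C_{2}\|_{\mathcal{L}}.
\end{align*}

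The symmetric argument, with the roles of $C_{1}$ and $C_{2}$ exchanged and with $V_{2} = \mathrm{span}(v_{2,1}, \ldots, v_{2,j-1})$, delivers $\lambda_{1,j} \leq \lambda_{2,j} + \|C_{1} - C_{2}\|_{\mathcal{L}}$, and combining the two bounds completes the proof. The only nontrivial ingredient is the min-max identity in the Hilbert space setting, which I expect to be the main obstacle in a careful write-up: one must check it for general compact operators rather than simply cite the finite-dimensional Courant-Fischer theorem. As noted above, this is handled by diagonalizing $C^{*}C$ via the spectral theorem and reducing to the diagonal case; once it is in hand, the remainder of the argument is a one-line application of the triangle inequality.
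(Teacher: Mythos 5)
Your proof is correct, but note that the paper does not prove this lemma at all: it is quoted verbatim as Lemma 1.6 of \cite{GoGoKa90} and used as an external ingredient. What you have supplied is the standard Weyl-type perturbation argument via the Courant--Fischer characterization of singular values, and it is sound: with $V_{1}=\spn(v_{1,1},\dots,v_{1,j-1})$ one gets $\sup\{\|C_{1}(x)\|:\|x\|=1,\ x\perp V_{1}\}\leq\lambda_{1,j}$, the triangle inequality transfers this to $C_{2}$ up to $\|C_{1}-C_{2}\|_{\mathcal{L}}$, and the min--max bound for $\lambda_{2,j}$ plus symmetry finishes the proof. Two small remarks. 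First, your argument (like the lemma itself) tacitly requires the singular values in (\ref{def: compact-op}) to be enumerated in non-increasing order; the paper's display only calls them nonnegative constants, so you should state this ordering explicitly. Second, the step you flag as the main obstacle --- establishing the min--max identity for general compact operators via the spectral theorem for $C^{*}C$ --- can be bypassed: you only need the inequality $\lambda_{2,j}\leq\sup\{\|C_{2}(x)\|:\|x\|=1,\ x\perp V_{1}\}$, and this follows directly from the given representation (\ref{def: compact-op}) by a dimension count, since $\spn(v_{2,1},\dots,v_{2,j})\cap V_{1}^{\perp}$ contains a unit vector $x$, for which $\|C_{2}(x)\|^{2}=\sum_{k=1}^{j}\lambda_{2,k}^{2}\langle x,v_{2,k}\rangle^{2}\geq\lambda_{2,j}^{2}$. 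With that observation your write-up becomes fully elementary and self-contained, which is arguably a useful addition relative to the paper's bare citation.
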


Now we assume that a compact operator $C_{2}$ with the singular value decomposition (\ref{def: compact-op}) is symmetric and $C_{2}(v_{2,j})=\lambda_{2,j}v_{2,j}$, that is, $f_{2,j}=v_{2,j}$. Note that any covariance operator $C$ satisfies these conditions. Define
\[
v'_{2,j} = c_{2,j}v_{2,j},\ c_{2,j} = \text{sign}(\langle v_{1,j}, v_{2,j}\rangle).
\]

\begin{lemma}[Lemma 2.3 in \cite{HoKo12}]\label{lem: HK-lem23} 
Suppose $C_{1},C_{2} \in \mathcal{L}$ are two compact operators with singular value decompositions (\ref{def: compact-op}). If $C_{2}$ is symmetric and $f_{2,j} = v_{2,j}$ in (\ref{def: compact-op}), and its eigenvalues satisfy $\lambda_{2,1}>\lambda_{2,2}>\dots>\lambda_{2,q}>\lambda_{2,q+1}$, then
\[
\|v_{1,j} - v'_{2,j}\| \leq {2\sqrt{2} \over \alpha_{2,j}}\|C_{1}-C_{2}\|_{\mathcal{L}},\ 1\leq j \leq q,
\]
where $\alpha_{2,1} = \lambda_{2,1}-\lambda_{2,2}$ and $\alpha_{2,j}=\min(\lambda_{2,j-1}-\lambda_{2,j}, \lambda_{2,j}-\lambda_{2,j+1})$, $2\leq j\leq q$. 
\end{lemma}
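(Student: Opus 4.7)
The plan is to follow the standard perturbation argument for symmetric compact operators (essentially the Davis--Kahan-type inequality that underlies Lemma 2.3 in \cite{HoKo12}). Since every invocation of this lemma in the paper is with $C_{1}$ also symmetric (both $\hat{C}_{u}$ and $C_{u}$ are covariance operators), I would specialize the write-up to that setting, where $C_{1} v_{1,j} = \lambda_{1,j} v_{1,j}$; the general singular-value version follows by a minor adaptation using $\|C_{1}v_{1,j}\|=\lambda_{1,j}$ in place of an eigenvalue equation.

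First I would expand $v_{1,j}$ in the orthonormal eigenbasis $\{v_{2,k}\}_{k\geq 1}$ of $C_{2}$, writing $v_{1,j} = \sum_{k\geq 1} a_{j,k} v_{2,k}$ with $a_{j,k} = \langle v_{1,j}, v_{2,k}\rangle$ and $\sum_{k} a_{j,k}^{2} = 1$. Using $c_{2,j} = \mathrm{sign}(a_{j,j})$, a direct calculation gives
\[
\|v_{1,j} - v'_{2,j}\|^{2} = 2\bigl(1 - |a_{j,j}|\bigr) \leq 2\bigl(1 - a_{j,j}^{2}\bigr) = 2\sum_{k \ne j} a_{j,k}^{2},
\]
so the problem reduces to controlling the off-diagonal mass $\sum_{k\ne j}a_{j,k}^{2}$.

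Next I would exploit the symmetry of $C_{2}$ to write $C_{2} v_{1,j} = \sum_{k} \lambda_{2,k} a_{j,k} v_{2,k}$, which yields
\[
\|(C_{2} - \lambda_{2,j} I) v_{1,j}\|^{2} = \sum_{k\ne j}(\lambda_{2,k}-\lambda_{2,j})^{2} a_{j,k}^{2} \geq \alpha_{2,j}^{2}\sum_{k\ne j} a_{j,k}^{2},
\]
since under the monotone ordering of the eigenvalues of the symmetric compact $C_{2}$ together with the assumption $\lambda_{2,1} > \cdots > \lambda_{2,q} > \lambda_{2,q+1}$, one checks $|\lambda_{2,k} - \lambda_{2,j}| \geq \alpha_{2,j}$ for every $k\ne j$. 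Upper-bounding the same quantity via
\[
(C_{2} - \lambda_{2,j} I) v_{1,j} = (C_{2}-C_{1})v_{1,j} + (\lambda_{1,j}-\lambda_{2,j}) v_{1,j}
\]
gives $\|(C_{2} - \lambda_{2,j} I) v_{1,j}\| \leq \|C_{1}-C_{2}\|_{\mathcal{L}} + |\lambda_{1,j} - \lambda_{2,j}| \leq 2\|C_{1}-C_{2}\|_{\mathcal{L}}$, the last step by Lemma \ref{lem: HK-lem22}. Combining the two inequalities yields $\sum_{k\ne j} a_{j,k}^{2} \leq 4\|C_{1}-C_{2}\|_{\mathcal{L}}^{2}/\alpha_{2,j}^{2}$, which plugged into the first step gives the claimed bound with constant $2\sqrt{2}$.

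The main obstacle is purely organizational: keeping track of the sign choice $c_{2,j}$ in the opening identity, and verifying that the single quantity $\alpha_{2,j}$ dominates \emph{every} spectral gap $|\lambda_{2,k}-\lambda_{2,j}|$ for $k\ne j$ rather than only the adjacent ones---this is precisely why strict ordering of the top $q+1$ eigenvalues is assumed. Once the expansion in the eigenbasis of $C_{2}$ is in place, all remaining steps are routine linear algebra, and the only input from functional analysis is the triangle inequality in $\mathcal{L}$ together with Lemma \ref{lem: HK-lem22}.
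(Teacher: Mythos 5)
Your argument is correct, but note that the paper itself contains no proof of this statement: it is imported verbatim as Lemma 2.3 of \cite{HoKo12} (see also Lemma 4.3 in \cite{Bo00}), so the comparison is with that standard source rather than with an argument in the text. What you give is the classical Davis--Kahan/Bosq perturbation argument, and in the self-adjoint case it checks out line by line: the identity $\|v_{1,j}-v'_{2,j}\|^{2}=2(1-|a_{j,j}|)\leq 2\sum_{k\neq j}a_{j,k}^{2}$, the observation that the descending order of the eigenvalues of $C_{2}$ together with $\lambda_{2,1}>\cdots>\lambda_{2,q}>\lambda_{2,q+1}$ gives $|\lambda_{2,k}-\lambda_{2,j}|\geq\alpha_{2,j}$ for all $k\neq j$, and the two-sided control of $\|(C_{2}-\lambda_{2,j}I)v_{1,j}\|$ (lower bound $\alpha_{2,j}(\sum_{k\neq j}a_{j,k}^{2})^{1/2}$, upper bound $2\|C_{1}-C_{2}\|_{\mathcal{L}}$ via Lemma \ref{lem: HK-lem22}) combine to give exactly the constant $2\sqrt{2}$. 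Since every invocation in the paper has $C_{1}$ equal to an empirical covariance operator or a symmetrized long-run covariance operator (hence self-adjoint with $C_{1}v_{1,j}=\lambda_{1,j}v_{1,j}$), your specialization covers all the paper's uses. The only soft spot is the remark that the general compact, non-self-adjoint case follows by a ``minor adaptation using $\|C_{1}v_{1,j}\|=\lambda_{1,j}$'': in that case $C_{1}v_{1,j}=\lambda_{1,j}f_{1,j}\neq\lambda_{1,j}v_{1,j}$, so the step $(C_{2}-\lambda_{2,j}I)v_{1,j}=(C_{2}-C_{1})v_{1,j}+(\lambda_{1,j}-\lambda_{2,j})v_{1,j}$ breaks down and the full statement as printed requires the somewhat different bookkeeping carried out in \cite{HoKo12}; this is a limitation of scope rather than an error, and it does not affect any result of the paper.
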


The proof of following Lemma \ref{lem: g} is straightforward and thus omitted. 
\begin{lemma}\label{lem: g}
Suppose that kernel $K_{1}$ satisfies Assumption \ref{Ass-KB}. Then, 
\begin{align*}
\sup_{u \in [C_{1}h, 1-C_{1}h]}\left|{1 \over Th}\sum_{t=1}^{T}K_{1,h}\left(u-{t \over T}\right) - 1\right| = O\left({1 \over Th^{2}}\right) + o(h). 
\end{align*}
\end{lemma}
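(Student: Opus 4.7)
My plan is to mirror the decomposition used in the proof of Lemma \ref{lem: f}, exploiting the fact that the symmetric interior constraint $u\in[C_{1}h,1-C_{1}h]$ is precisely what is needed here to eliminate the boundary contribution entirely. I would write
\begin{align*}
{1 \over Th}\sum_{t=1}^{T}K_{1,h}\!\left(u-{t \over T}\right) - 1
&= \underbrace{\left({1 \over Th}\sum_{t=1}^{T}K_{1,h}\!\left(u-{t \over T}\right) - {1 \over h}\int_{0}^{1}K_{1,h}(u-\varphi)d\varphi\right)}_{A(u)} \\
&\quad + \underbrace{\left({1 \over h}\int_{0}^{1}K_{1,h}(u-\varphi)d\varphi - 1\right)}_{B(u)},
\end{align*}
treating the first summand as a Riemann-sum discretization error and the second as a boundary/truncation error.

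For $A(u)$ I would use the Lipschitz assumption from Assumption \ref{Ass-KB}: the map $\varphi\mapsto K_{1,h}(u-\varphi)$ is Lipschitz with constant $C_{2}/h$, so on each subinterval $\varphi\in[(t-1)/T,t/T]$ we have $|K_{1,h}(u-t/T)-K_{1,h}(u-\varphi)|\le C_{2}/(hT)$. Rewriting $A(u)$ as a telescoping sum over these subintervals and bounding each term gives
\[
|A(u)| \le {1 \over h}\sum_{t=1}^{T}\int_{(t-1)/T}^{t/T}\!|K_{1,h}(u-t/T)-K_{1,h}(u-\varphi)|\,d\varphi \;\lesssim\; {1 \over h}\cdot T \cdot {C_{2} \over hT} \cdot {1 \over T} = O\!\left({1 \over Th^{2}}\right),
\]
uniformly in $u$, exactly as done for $A_{1}$ in the proof of Lemma \ref{lem: f}. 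For $B(u)$, the substitution $z=(u-\varphi)/h$ gives $B(u)+1=\int_{(u-1)/h}^{u/h}K_{1}(z)\,dz$; for $u\in[C_{1}h,1-C_{1}h]$ we have $u/h\ge C_{1}$ and $(u-1)/h\le -C_{1}$, so the range of integration contains the entire support $[-C_{1},C_{1}]$ of $K_{1}$ and the integral equals $\int K_{1}=1$. Hence $B(u)\equiv 0$ on the interior region.

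Combining the two bounds yields a uniform estimate of order $O(1/(Th^{2}))$, which is strictly stronger than (and therefore implies) the stated $O(1/(Th^{2}))+o(h)$; the $o(h)$ term appears to be retained only as a conservative safety margin. The only point that deserves care is the boundary analysis for $B(u)$—one must verify that the interior restriction on $u$ is calibrated exactly to the support of $K_{1}$, so that no residual $(1/h)|u-\text{endpoint}|$ contribution arises as it did in Lemma \ref{lem: f}. Apart from this observation, the argument is indeed straightforward, consistent with the author's remark that the proof is omitted.
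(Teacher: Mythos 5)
Your proof is correct and takes essentially the approach the paper intends: the proof of Lemma \ref{lem: g} is omitted as straightforward, and your decomposition into the Riemann-sum discretization error (bounded via the Lipschitz property of $K_{1}$, giving $O(1/(Th^{2}))$ uniformly) plus the integral term mirrors exactly the treatment of $A_{1}$ and $A_{3}$ in the paper's proof of Lemma \ref{lem: f}. Your observation that the integral term vanishes identically for $u \in [C_{1}h, 1-C_{1}h]$, so that the stated $o(h)$ term is not actually needed, is also sound.
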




\end{document}